\documentclass[10pt]{amsart}

\usepackage{amsmath, amssymb, amsthm, amsfonts}

\input xy
\xyoption{all}


\usepackage{tikz-cd}
\tikzset{
  symbol/.style={
    draw=none,
    every to/.append style={
      edge node={node [sloped, allow upside down, auto=false]{$#1$}}}
  }
}

\usetikzlibrary{intersections}


\textwidth=125mm
\textheight=185mm
\parindent=8mm
\evensidemargin=0pt
\oddsidemargin=0pt
\frenchspacing


\usepackage{varwidth}

\usepackage{hyperref}

\usepackage{color}

\usepackage{enumitem}

\swapnumbers
\numberwithin{equation}{section}

\theoremstyle{plain}
\newtheorem{theorem}[subsubsection]{Theorem}
\newtheorem{lemma}[subsubsection]{Lemma}
\newtheorem{prop}[subsubsection]{Proposition}
\newtheorem{cor}[subsubsection]{Corollary}

\theoremstyle{definition}
\newtheorem{defn}[subsubsection]{Definition}

\newtheorem{remark}[subsubsection]{Remark}
\newtheorem{exam}[subsubsection]{Example}

\setcounter{tocdepth}{1}
 

\setlength{\textwidth}{460pt}
\setlength{\oddsidemargin}{0pt}
\setlength{\evensidemargin}{0pt}
\setlength{\topmargin}{0pt}
\setlength{\textheight}{620pt}


\def\AA{\mathbb{A}}
\def\BB{\mathbb{B}}
\def\CC{\mathbb{C}}
\def\DD{\mathbb{D}}

\def\FF{\mathbb{F}}
\def\GG{\mathbb{G}}
\def\HH{\mathbb{H}}

\def\NN{\mathbb{N}}

\def\PP{\mathbb{P}}
\def\QQ{\mathbb{Q}}
\def\RR{\mathbb{R}}

\def\TT{\mathbb{T}}

\def\WW{\mathbb{W}}

\def\ZZ{\mathbb{Z}}


\def\calA{\mathcal{A}}

\def\calD{\mathcal{D}}
\def\calE{\mathcal{E}}
\def\calF{\mathcal{F}}

\def\calK{\mathcal{K}}

\def\calO{\mathcal{O}}
\def\calP{\mathcal{P}}

\def\calV{\mathcal{V}}

\newcommand\cA{\mathcal{A}}
\newcommand\cB{\mathcal{B}}
\newcommand\cC{\mathcal{C}}
\newcommand\cD{\mathcal{D}}
\newcommand\cE{\mathcal{E}}
\newcommand\cF{\mathcal{F}}

\newcommand\cH{\mathcal{H}}
\newcommand\cI{\mathcal{I}}
\newcommand\cJ{\mathcal{J}}
\newcommand\cK{\mathcal{K}}

\newcommand\cO{\mathcal{O}}
\newcommand\cP{\mathcal{P}}

\newcommand\cR{\mathcal{R}}
\newcommand\cS{\mathcal{S}}

\newcommand\cU{\mathcal{U}}
\newcommand\cV{\mathcal{V}}

\newcommand\cZ{\mathcal{Z}}


\def\bP{\mathbf{P}}
\def\bQ{\mathbf{Q}}

\def\bT{\mathbf{T}}


\newcommand\frA{\mathfrak{A}}

\newcommand\frF{\mathfrak{F}}

\newcommand\frg{\mathfrak{g}}
\newcommand\frh{\mathfrak{h}}

\newcommand\frakm{\mathfrak{m}}

\newcommand\frp{\mathfrak{p}}
\newcommand\frq{\mathfrak{q}}

\newcommand\frt{\mathfrak{t}}


\newcommand\tilW{\widetilde{W}}



\newcommand\dG{\widehat{G}}
\newcommand\dT{\widehat{T}}
\newcommand\dH{\widehat{H}}

\newcommand\dS{\widehat{S}}

\newcommand\dg{\widehat{\mathfrak{g}}}
\newcommand\dt{\widehat{\mathfrak{t}}}

\newcommand\LG{\leftexp{L}{G}}


\newcommand\aff{\textup{aff}}

\newcommand\AS{\textup{AS}}

\newcommand{\Bun}{\textup{Bun}}

\newcommand{\ch}{\textup{char}}

\newcommand{\coker}{\textup{coker}}

\newcommand\ev{\textup{ev}}

\newcommand{\Fl}{\textup{Fl}}

\newcommand\Fr{\textup{Fr}}

\newcommand\Fun{\textup{Fun}}
\newcommand\Gal{\textup{Gal}}

\newcommand{\Gr}{\textup{Gr}}

\newcommand{\Hk}{\textup{Hk}}

\newcommand\IC{\textup{IC}}
\newcommand\id{\textup{id}}
\renewcommand{\Im}{\textup{Im}}
\newcommand{\Ind}{\textup{Ind}}
\newcommand{\ind}{\textup{ind}}

\newcommand\Irr{\textup{Irr}}

\newcommand\Lie{\textup{Lie }}
\newcommand\Loc{\textup{Loc}}

\newcommand{\mult}{\textup{mult}}

\newcommand\Out{\textup{Out}}

\newcommand\Perv{\textup{Perv}}

\newcommand\pt{\textup{pt}}

\newcommand{\re}{\textup{re}}
\newcommand{\red}{\textup{red}}

\newcommand\Rep{\textup{Rep}}
\newcommand{\Res}{\textup{Res}}

\newcommand\rk{\textup{rk}}

\newcommand\Spec{\textup{Spec}\ }
\newcommand\St{\textup{St}}

\newcommand\Stab{\textup{Stab}}

\newcommand\Swan{\textup{Swan}}
\newcommand\Sym{\textup{Sym}}
\newcommand{\tame}{\textup{tame}}

\newcommand{\Vect}{\textup{Vect}}

\newcommand\Aut{\textup{Aut}}
\newcommand\Hom{\textup{Hom}}
\newcommand\End{\textup{End}}

\newcommand\uHom{\underline{\Hom}}
\newcommand\uEnd{\underline{\End}}

\newcommand\GL{\textup{GL}}

\newcommand\PGL{\textup{PGL}}

\newcommand\SL{\textup{SL}}

\newcommand\SO{\textup{SO}}

\newcommand\PSO{\textup{PSO}}

\newcommand\Sp{\textup{Sp}}

\newcommand\Spin{\textup{Spin}}

\newcommand{\Gm}{\GG_m}
\newcommand{\Gmg}{\GG_{m,\ov k}}
\def\Ga{\GG_a}

\newcommand{\Ad}{\textup{Ad}}

\newcommand\xch{\mathbb{X}^*}
\newcommand\xcoch{\mathbb{X}_*}


\newcommand{\incl}{\hookrightarrow}
\newcommand{\isom}{\stackrel{\sim}{\to}}

\newcommand{\surj}{\twoheadrightarrow}
\newcommand{\leftexp}[2]{{\vphantom{#2}}^{#1}{#2}}
\newcommand{\pH}{\leftexp{p}{\textup{H}}}

\newcommand{\Qlbar}{\overline{\QQ}_\ell}

\renewcommand{\j}[1]{\langle{#1}\rangle}
\newcommand{\wt}[1]{\widetilde{#1}}

\newcommand\quash[1]{}

\newcommand{\bu}{\bullet}
\newcommand{\ov}{\overline}
\newcommand{\bs}{\backslash}
\newcommand{\tl}[1]{[\![#1]\!]}
\newcommand{\lr}[1]{(\!(#1)\!)}
\newcommand\sss{\subsubsection}
\newcommand\xr{\xrightarrow}
\newcommand\op{\oplus}
\newcommand\ot{\otimes}
\newcommand\bt{\boxtimes}
\newcommand\one{\mathbf{1}}

\newcommand{\vn}{\varnothing}

\newcommand{\cohog}[2]{\textup{H}^{#1}({#2})}     
\newcommand{\cohoc}[2]{\textup{H}_{c}^{#1}({#2})}     

\newcommand\upH{\textup{H}}

\newcommand{\oll}[1]{\overleftarrow{#1}}
\newcommand{\orr}[1]{\overrightarrow{#1}}


\renewcommand\a\alpha
\renewcommand\b\beta
\newcommand\g\gamma
\newcommand\G\Gamma
\renewcommand\d\delta
\newcommand{\D}{\Delta}

\renewcommand{\th}{\theta}

\newcommand{\ph}{\varphi}
\renewcommand{\r}{\rho}
\newcommand{\s}{\sigma}

\newcommand{\y}{\eta}
\newcommand{\z}{\zeta}

\renewcommand{\l}{\lambda}

\newcommand{\om}{\omega}
\newcommand{\Om}{\Omega}


\newcommand{\kbar}{\overline{k}}


\newcommand{\Zm}{\ZZ/m\ZZ}



\newcommand{\ls}[1]{k(\!({#1})\!)}


\newcommand{\abar}{\overline{\alpha}}


\title{Euphotic representations and rigid automorphic data}

\dedicatory{}
\author{Konstantin Jakob}
\thanks{K.J. is supported by the DFG Research Fellowship JA 2967/1-1. Z.Y. is partially supported by the Packard Foundation and the Simons Foundation.}
\address{(K.J.) Department of Mathematics, Massachusetts Institute of Technology, 77 Massachusetts Ave, Cambridge, MA 02139 }
\email{kjakob@mit.edu}
\author{Zhiwei Yun}
\address{(Z.Y.) Department of Mathematics, Massachusetts Institute of Technology, 77 Massachusetts Ave, Cambridge, MA 02139}
\email{zyun@mit.edu}
\date{}
\subjclass[2020]{Primary: 11F70, 14D24, 22E57; Secondary: 14F06}
\keywords{Langlands correspondence, rigid local systems, automorphic representations, Hessenberg varieties}

\begin{document}

\begin{abstract}
We propose a new method to construct rigid $G$-automorphic representations and rigid $\dG$-local systems for reductive groups $G$. The construction involves the notion of {\em euphotic representations}, and the proof for rigidity involves the geometry of certain Hessenberg varieties. 
\end{abstract}

\maketitle

\tableofcontents
\section{Introduction}

\subsection{Rigid local systems and automorphic representations}
Rigid local systems on a punctured curve are those that don't admit deformations that preserve the local monodromy at the punctures. Many well-known local systems in arithmetic are rigid, e.g., Kloosterman sheaves and hypergeometric sheaves. Katz (\cite{Ka88} and \cite{Ka96}) studied rigid local systems systematically, and he gave an algorithm for producing all tame rigid local systems of arbitrary rank. This algorithm has been extended by Arinkin \cite{Arinkin10} (and Deligne) to cover all rigid local systems.

For a reductive group $H$ over $\Qlbar$ an $H$-local systems on a curve $U$ is a homomorphism from $\pi_{1}(U,u)$ to $H(\Qlbar)$. There is a notion of rigidity for $H$-local systems generalizing the rigidity for rank $n$ local systems. Much less is known about rigid $H$-local systems for general $H$. 

Rigid local systems have seen application in inverse Galois theory and in the construction of motives with exceptional Galois groups, see \cite{Yun14}, \cite{DR00}, \cite{DR10}. In particular it is of interest to construct many examples of rigid $H$-local systems, especially for exceptional groups $H$. 

While \cite{DR10} use the Katz algorithm to construct and classify rigid $G_2$-local systems, this algorithm is unavailable for a general reductive group $H$. Even for rank $n$ local systems it is often computationally and technically involved. In a series of works (\cite{HNY}, \cite{Yun14} and \cite{Yun16}) a new method of constructing rigid $H$-local systems is developed, and many examples are given. The key new ingredient in that method is to use the Langlands correspondence for function fields to transport the problem of constructing rigid $H$-local systems into constructing rigid $G$-automorphic representations. Here $H$ is identified with the Langlands dual group of $G$. 

This method has several advantages. When trying construct a rigid $H$-local system using the Katz algorithm one has to construct a rank $n$ local system and impose conditions on it to force its global monodromy to lie in $H$. Not all rigid $H$-local systems can be obtained in this way. 

In the geometric Langlands approach one may directly construct $H$-local systems. In addition it turns out that rigid automorphic representations are sometimes easier to obtain, and techniques from the geometric Langlands program are crucial in passing from rigid automorphic representations to local systems.

This paper aims to expand the zoo of rigid $G$-automorphic representations and rigid $\dG$-local systems by generalizing the construction of \cite{Yun16}. We consider $F=k(t)$, the function field of $\PP^{1}$ over $k=\FF_{q}$. The $G$-automorphic representations we construct have depth zero at $0$ and positive depth $1/m$ at $\infty$. The corresponding $\dG$-local systems over $\PP^{1}\bs\{0,\infty\}$ are tamely ramified at $0$ and wildly ramified at $\infty$, and are expected to be rigid.

\subsection{Euphotic representations}
In this introduction let $G$ be a split almost simple group over the function field $F=k(t)$ of $\PP^{1}_{k}$. Let $F_{0},F_{\infty}$ be the local fields of $F$ at $0,\infty\in \PP^{1}$.

The starting point of  \cite{Yun16} is a class of supercuspidal representations of $G(F_{\infty})$  introduced by Reeder and Yu \cite{RY14} called {\em epipelagic representations}. They generalize an earlier construction of {\em simple supercuspidal representations} by Gross and Reeder \cite{GR10} that motivated the construction of Kloosterman sheaves in \cite{HNY}.  

In this paper, we define a more general class of representations of the $p$-adic group $G(F_{\infty})$ than epipelagic ones which we call {\em euphotic} representations.\footnote{In oceanography euphotic is synonymous to epipelagic, stressing the role of light. Depending on the transparency of the ocean water the euphotic zone may vary in depth - in analogy there are more possibilities for the depth of a euphotic representation than for the depth of an epipelagic representation. } The data needed to construct a euphotic representation is a triple $(\bP_{\infty}, \psi,\chi)$. Here
\begin{itemize}
\item $\bP_{\infty}$ is a parahoric subgroup of $G(F_{\infty})$;
\item $\psi$ is a linear function on the vector space $V_{\bP}=\bP^{+}_{\infty}/\bP^{++}_{\infty}$ (the first nontrivial associated graded of the pro-unipotent radical $\bP_{\infty}^{+}$ under the Moy-Prasad filtration). We require $\psi$ to be {\em semisimple} in the sense that its orbit under $L_{\bP}=\bP_{\infty}/\bP^{+}_{\infty}$ is closed.
\item Let $L_{\psi}$ be the stabilizer of $\psi$ under $L_{\bP}$, and $B_{\psi}$ be a Borel subgroup of $L_{\psi}$ with quotient Cartan $T_{\psi}$. Then $\chi$ is a character $\chi: T_{\psi}(k)\to \Qlbar^{\times}$. 
\end{itemize}
A euphotic representation $\pi$ of type $(\bP_{\infty},\psi,\chi)$ is an irreducible representation of $G(F_{\infty})$ that contains an eigenvector under $B_{\psi}(k)\bP^{+}_{\infty}$ on which $B_{\psi}(k)$ acts via $\chi$ (inflated from $T_{\psi}(k)$) and $\bP^{+}_{\infty}$ acts via $\psi_{k}\circ\psi$ (inflated from $V_{\bP}$, and $\psi_{k}:k\to \Qlbar^{\times}$ is a fixed nontrivial additive character).

Compared to the notion of epipelagic representations in \cite{RY14}, we have relaxed the condition on $\psi$: it is only required to have a closed orbit under $L_{\bP}$ and {\em not} required to have finite stabilizer under $L_{\bP}$. Functionals on $V_{\bP}$ with closed orbit that are not stable are also encountered in work of Kamgarpour and Yi on the geometric Langlands correspondence for hypergeometric sheaves \cite{KY20}.


\subsection{Euphotic automorphic data}
To construct rigid $G$-automorphic representations, we start with a triple $(\bP_{\infty}, \psi,\chi)$ as above, and choose a parahoric subgroup $\bQ_{0}$ of $G(F_{0})$. We impose several conditions on these data (see Definition \ref{d:qep data}) which are of geometric nature (i.e., they only have to do with the situation over $\ov k$). Among these conditions is the requirement that certain Hessenberg varieties coming from cyclic gradings on $\frg$ have a stabilizer property under a group action, which we call ``spectrally meager'' (see Definition \ref{d:sp meager}), a notion that we believe is of independent interest.

We prove that a euphotic automorphic datum $(\bP_{\infty}, \psi,\chi, \bQ_{0})$ is weakly rigid in the following sense: there is a small (but nonzero) number of irreducible cuspidal automorphic representations $\pi$ of $G(\AA_{F})$ such that $\pi_{\infty}$ is  euphotic of type $(\bP_{\infty},\psi,\chi)$, $\pi_{0}$ contains a nonzero $\bQ_{0}$-fixed vector, and $\pi$ is unramified otherwise; and the number of such cuspidal automorphic representations is uniformly bounded when $k$ is replaced with any finite extension.   This is proved by analyzing the space of automorphic functions cut out by the eigen-conditions at $0$ and $\infty$, and Hessenberg varieties naturally show up in this analysis.

\subsection{Hecke eigensheaves and local systems} To construct the $\dG$-local systems out of these automorphic representations, we consider automorphic sheaves instead of functions.  The automorphic datum $(\bP_{\infty}, \psi,\chi, \bQ_{0})$ gives rise to an abelian category $\cP=\cP(\psi,\chi)$ of perverse sheaves on a certain moduli stack of $G$-bundles on $\PP^{1}_{\ov k}$ with level structures given by $\bP^{++}_{\infty}$ and $\bQ_{0}$. This category has only finitely many simple objects, which is an indication of rigidity. 

Here comes a crucial difference with all previous work on rigid automorphic representations. Previously the analogous categories $\cP$ always decomposed into Hecke-stable pieces with one simple object (a Hecke eigensheaf) in each piece, and the framework of \cite{Yun14} allowed us to extract a $\dG$-local system from each Hecke eigensheaf. In the euphotic situation, the category $\cP$ sometimes has more than one simple object yet there isn't an obvious way to decompose it further.  We remark that this is a feature rather than a bug: it is likely that these more complicated categories $\cP$ give nontrivial global $L$-packets. To deal with this situation, we extend the framework of \cite{Yun14} to extract eigen local systems from a Hecke eigencategory rather than a Hecke eigensheaf.  The extra work needed is of categorical nature: we need to analyze the structure of semisimple factorizable module categories under a neutral Tannakian category. We give a self-contained treatment of this issue in Appendix A, proving a classification result in Theorem \ref{th:fact mod}.


The main results of general nature in this paper can be summarized as follows.  For simplicity we state the results in the case $G$ is split. For notations, see \S\ref{s:sh}.

\begin{theorem}[see Theorem \ref{th:eigen loc} and Prop. \ref{p:mono 0}] Assume $G$ is split. Let $(\bP_{\infty}, \psi,\chi,\bQ_{0})$ be a euphotic automorphic datum. Consider the category $\cP(\psi,\chi)$ of perverse sheaves on $\Bun_{G}(\bQ_{0}, \bP_{\infty}^{++})_{\ov k}$ that are  $(B_{\psi} \ltimes V_{\bP}, \AS_{\psi}\bt\cK_{\chi})$-equivariant. Then
\begin{enumerate}
\item $\cP(\psi,\chi)$ has finitely many simple objects up to isomorphisms, and all of them are clean extensions from an explicit locally closed substack of $\Bun_{G}(\bQ_{0}, \bP_{\infty}^{++})$.
\item There are finitely many semisimple $\dG$-local systems $\{E_{\s}\}_{\s\in \Sigma}$ over $\Gmg$ (for some index set $\Sigma$), and a decomposition of $\cP^{ss}(\psi,\chi)$ (semisimple objects in $\cP(\psi,\chi)$) 
\begin{equation*}
\cP^{ss}(\psi,\chi)=\bigoplus_{\s\in \Sigma} \cP_{\s}
\end{equation*}
into Hecke-stable subcategories, such that each $\cP_{\s}$ is an $E_{2}$-module category under $\Rep(\dG_{\s})$  for $\dG_{\s}=\Aut_{\dG}(E_{\s})$ (whose action on $\cP_{\s}$ is denoted by $\bu$), and the action of the geometric  Hecke operator $\bT_{V}$ (where $V\in \Rep(\dG)$) on $\cA\in \cP_{\s}$ is given by
\begin{equation*}
\bT_{V}(\cA)\cong \bigoplus_{E}E\boxtimes ([E_{\s}(V):E]\bu \cA)\in \Perv(\Gmg\times \Bun_{G}(\bQ_{0}, \bP_{\infty}^{++})_{\ov k}).
\end{equation*}
Here the direct sum is over all irreducible local systems $E$ over $\Gmg$, $E_{\s}(V)\in\Loc(\Gmg)$ is the (semisimple) local system on $\Gmg$ associated to $E_{\s}$ and $V$, $[E_{\s}(V):E]$ is the multiplicity space of $E$ in $E_{\s}(V)$, viewed as an object in $\Rep(\dG_{\s})$. 

\item The geometric monodromy of each $\dG$-local system $E_{\s}$ is tame and unipotent at $0$. Under Lusztig's bijection between  unipotent classes in $\dG$ and two-sided cells of the affine Weyl group $W_{\aff}$, the unipotent monodromy of $E_{\s}$ at $0$ corresponds to the two-sided cell $c_{\bQ}$ containing the longest element of $W_{\bQ_{0}}$.
\end{enumerate}

\end{theorem}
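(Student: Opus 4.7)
The plan is to address the three assertions separately, each drawing on a different body of technique.

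For part (1), I would first stratify $\Bun_{G}(\bQ_{0}, \bP_{\infty}^{++}U_{\psi})_{\ov k}$ according to the reduction of the $G$-bundle to $\bP_{\infty}$ near $\infty$ and to $\bQ_{0}$ near $0$, so that the $V_{\bP}\times T_{\psi}$-action restricts on each stratum to an action coming from the quotient $L_{\bP}/L_{\psi}$-orbit of $\psi$. Any $(V_{\bP}\times T_{\psi},\AS_{\psi}\boxtimes \cK_{\chi})$-equivariant simple object has support contained in the preimage of the closed $L_{\bP}$-orbit through $\psi$; since this orbit is closed by the semisimplicity hypothesis and has stabilizer $L_{\psi}$, the relevant stratification is finite modulo the gauge action of $L_{\psi}$. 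The key input is the spectrally meager hypothesis on the associated Hessenberg varieties (Definition \ref{d:sp meager}) which bounds the generic $T_{\psi}$-stabilizers along strata. I would then identify the explicit open substack on which the equivariance admits a nonzero sheaf and prove cleanness by the standard Artin--Schreier vanishing: on the boundary the character $\psi$ becomes nontrivial on a unipotent direction, so both $j_{!}$- and $j_{*}$-extensions of any $\AS_{\psi}$-equivariant sheaf vanish on stalks/costalks outside the open substack.

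For part (2), I would invoke the general machinery of Appendix A. The geometric Hecke operators $\bT_{V}$ parametrized by $x\in \Gmg$ and $V\in\Rep(\dG)$ endow $\cP^{ss}(\psi,\chi)$ with the structure of a semisimple factorizable module category over the neutral Tannakian category $\Rep(\dG)$ (with the affine Grassmannian providing the factorization structure). Applying Theorem \ref{th:fact mod} decomposes this module category canonically into blocks $\cP_{\s}$, each block being equivalent to a module category associated to an $E_{2}$-local system $E_{\s}$ over $\Gmg$ and a Tannakian subgroup $\dG_{\s}\subset \Aut_{\dG}(E_{\s})$. The displayed formula for $\bT_{V}(\cA)$ is then the universal shape predicted by the classification: Hecke action factors through the external local system $E_{\s}(V)$ together with the multiplicity action $[E_{\s}(V):E]\bu(-)$ of $\dG_{\s}$ on $\cP_{\s}$.

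For part (3), tameness at $0$ is forced by the fact that the level structure at $0$ is a parahoric $\bQ_{0}$, so the Hecke eigensheaves have depth zero at $0$ and the eigen local systems are tamely ramified there. To pin down the unipotent class, I would study the nearby cycles at $0$ on Hecke correspondences associated to the parahoric $\bQ_{0}$. By a variant of the Bezrukavnikov--Gaitsgory theory (or by a direct computation using the affine Hecke algebra of $\bQ_{0}$), the asymptotic support of the Hecke action of $\bT_{V}$ on $\cP_{\s}$ near $0$ lies over a two-sided cell of the affine Weyl group, and compatibility with our open substack identifies this cell as the one containing the longest element of $W_{\bQ_{0}}$. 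Under Lusztig's bijection this two-sided cell corresponds to the promised unipotent class, which therefore coincides with the unipotent monodromy of $E_{\s}$ at $0$.

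The main obstacle will be part (1): turning the abstract spectrally meager property of Hessenberg varieties into a concrete stratification with only finitely many relevant strata, and proving cleanness on each, requires a careful orbit-theoretic analysis of the moduli stack near the relevant loci. Once part (1) is in place, part (2) is a formal application of the Tannakian machinery from the appendix, and part (3) reduces to known structural facts about parahoric level types and the Kazhdan--Lusztig--Lusztig correspondence between unipotent classes in $\dG$ and two-sided cells of $W_{\aff}$.
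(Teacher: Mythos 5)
In part (1) your mechanism for cleanness is misattributed, and following it literally would not close the argument. You write that on the boundary ``the character $\psi$ becomes nontrivial on a unipotent direction, so both $j_!$- and $j_*$-extensions of any $\AS_\psi$-equivariant sheaf vanish.'' The Artin--Schreier equivariance does play a role, but only to descend the restriction of $\cA$ to a Birkhoff stratum $\Bun_{[w]}$ to an object of $D_{(B_\psi,\cK_\chi)}(Y_w)$ on the Hessenberg variety $Y_w\subset L/Q_w$ (this is the sheaf-theoretic version of the computation in \S\ref{sss:Yw}). What actually kills the entire boundary stratum $\Bun_{[w]}$ for $w\notin W_\bP\Om W_\bQ$ — which is what cleanness means here, with the ``explicit open substack'' being $\sqcup_{w\in\Om}\Bun_{[w]}$ — is the Kummer mechanism: the spectrally-meager hypothesis supplies a nontrivial torus inside $\Stab_{B_\psi}(y)$ at every geometric point $y\in Y_w$, and condition (3) of Definition \ref{d:qep data} guarantees $\cK_\chi$ is nontrivial on each such torus, so $D_{(B_\psi,\cK_\chi)}(Y_w)=0$. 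An Artin--Schreier vanishing on a ``unipotent direction'' on the boundary is simply not what is happening: those boundary strata are killed by $\cK_\chi$ restricted to torus stabilizers, not by $\AS_\psi$ on unipotent subgroups. You need to make this the centerpiece of part (1); as written, your argument would not prove the stalk/costalk vanishing.

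Parts (2) and (3) align in outline with the paper but silently assume its hardest technical lemmas. For (2), the claim that Hecke operators give $\cP^{ss}(\psi,\chi)$ the structure of a semisimple factorizable $\Rep(\dG)$-module category with coefficients in $\Loc(\Gmg)$ is precisely Proposition \ref{p:eigencat}, which is not formal: one needs perversity of $\bT_V(\cA)[1]$ (proved by an ind-affineness argument for the restricted Hecke map over $\Bun_{[1]}$ together with a bootstrapping step to other components), local constancy along $\Gm$ via ULA, semisimplicity via the decomposition theorem, and the factorization isomorphism. Only after that does Corollary \ref{c:fact mod} apply. For (3), the paper's proof is indeed a variant of the argument of \cite[\S4.11--4.18]{Yun16} (nearby-cycle/central sheaf techniques in the style you cite), but the one nontrivial adaptation — showing the map $\orr{h}'$ on the modified Hecke stack $\Hk'_0$ is ind-affine so that central sheaves act t-exactly — is exactly what you would have to supply; saying one can ``use a variant of Bezrukavnikov--Gaitsgory theory'' is a placeholder for that verification, not a proof.
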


\subsection{Examples} More than half of the paper is devoted to various examples of euphotic automorphic data that are not epipelagic. 

The starting point of our work is a new rigid $G_{2}$-connection on $\PP^{1}\bs\{0,\infty\}$ found by the first-named author \cite{Ja20}. We looked for the automorphic representation corresponding to the $\ell$-adic counterpart of that $G_{2}$-connection, and arrived at the notion of euphotic automorphic data in general. This $G_{2}$ example is presented in detail in \S\ref{s:G2}.

In \S\ref{s: hyperspecial} we give a complete list of euphotic automorphic data when the parahoric subgroup $\bP_{\infty}$ is the hyperspecial parahoric $G(\cO_{\infty})$. The list in this case turns out to be closely related to the classification of double partial flag varieties $G/P_{1}\times G/P_{2}$ that are spherical as a $G$-variety. The latter problem has been solved by Stembridge \cite{St03}, and we use his results. We then study  in \S\ref{s: analyze stabs} the Hessenberg varieties that appear in these examples in more detail, in order to conclude that they are spectrally meager, thereby verifying that the list in \S\ref{s: hyperspecial} indeed gives euphotic automorphic data. 

In \S\ref{s:pot expl}  we give some potential examples of euphotic automorphic data, mostly for exceptional groups. For these groups we have only checked one of the conditions in the definition of euphotic automorphic data.

\subsection{Questions for further study}
\begin{enumerate}
\item The most complicated part in verifying a euphotic automorphic datum is to show that certain Hessenberg varieties are spectrally meager, a condition on the stabilizers of a certain solvable group action.  In \S\ref{s: analyze stabs} we deal with  Hessenberg varieties arising from the adjoint representation of $G$, and we show they are spectrally meager by relating them to Springer fibers. What is still missing is an effective criterion for Hessenberg varieties to be spectrally meager in general.

\item We make predictions on the Langlands parameters of euphotic representations in \S\ref{ss:local L param}, especially about their slopes. This prediction is closely related to another open problem of showing that the $\dG$-local systems we obtain from euphotic automorphic data are indeed cohomologically rigid (see Prop. \ref{p:rig} for evidence).

\item Give a complete list of $(\bP_{\infty}, \psi,\bQ_{0})$ for each almost simple quasi-split $G$, such that there exists $\chi$ making $(\bP_{\infty}, \psi,\chi, \bQ_{0})$ into a euphotic automorphic datum.

\end{enumerate}

\subsection{Notation and preliminaries}

\subsubsection{The curve $X$}
Fix a finite field $k$ of characteristic $p$ and let $F$ be the field of rational functions on $X=\PP^1_k$. Fix an affine coordinate $t$ on $X\bs\{\infty\}$, and we identify $F=k(t)$. The closed points $|X|$ of $X$ are in bijection with the places of $F$. For $x\in |X|$ we denote by $\calO_x$ the completed local ring of $X$ at $x$ and by $F_x$ its field of fractions. The ring of add\`eles of $F$ is the restricted product
\[\AA_F=\sideset{}{'}\prod_{x\in |X|}F_x.\]
 
\subsubsection{The split group $\GG$}\label{no:group split} 
Let $\GG$ be a split, connected semisimple group over $k$ which is almost simple over $\kbar$. Fix a maximal split torus $\TT\subset \GG$, a Borel subgroup $\BB$ containing $\TT$, and extend these choices into a pinning $\dagger=(\TT,\BB, \{x_{i}\}_{i\in I})$ of $\GG$ (where $I$ indexes the set of simple roots).  Let $\Aut^{\dagger}(\GG)$ be the finite group of pinned automorphisms of $\GG$, which is identified with the outer automorphism group of $\GG$.  Let $\frg=\Lie\GG$.

We make the following assumption:
\begin{equation*}
\mbox{There exists a non-degenerate $\Ad(\GG)$-invariant symmetric bilinear form on $\frg$.}
\end{equation*}
This assumption is satisfied when $\ch(k)$ is sufficiently large.  

\sss{The quasi-split group $G$}\label{no:group} 
Let $e\in\{1,2,3\}$. Assume $p\ne e$ and that $k$ contains all $e$-th roots of unity. Fix an injective homomorphism $\th_{\Out}: \mu_{e}(k)\to \Aut^{\dagger}(\GG)$.

Let $G$ be the quasi-split form of $\GG$ over $\Gm=X-\{0,\infty\}$ determined by $\th_{\Out}$. More precisely, let $\wt \Gm=\Spec k[t^{1/e},t^{-1/e}]\to \Gm=\Spec k[t,t^{-1}]$ be the $\mu_{e}$-torsor, and consider the Weil restriction $\Res_{\wt\Gm/\Gm}\GG$. Then $G$ is the fixed point subgroup of $\Res_{\wt\Gm/\Gm}\GG$  under the diagonal action of $\mu_{e}$ on both $\wt\Gm$ and on $\GG$ via $\th_{\Out}$.  

The base changes $G_{F_{0}}$ and $G_{F_{\infty}}$ are quasi-split forms of $\GG$ over the respective local fields determined by the same homomorphism $\th_{\Out}$, viewing $\mu_{e}(k)$ as the the quotient of $\Gal(F_{0}^{s}/F_{0})$ and $\Gal(F_{\infty}^{s}/F_{\infty})$ realized by the tamely ramified extensions $\ls{t^{1/e}}$ and $\ls{t^{-1/e}}$. 

Let $A=\TT^{\mu_{e},\circ}$. From the construction of the group scheme $G$ over $\Gm$, the constant torus $A\times\Gm$  is a maximal split torus of $G$. Similarly, $A_{F_{0}}$ and $A_{F_{\infty}}$ are maximal split tori of $G_{F_{0}}$ and $G_{F_{\infty}}$.

\sss{Coefficient field} We fix a prime $\ell\ne \ch(k)$. The representations of $p$-adic groups and ad\`elic groups will be on $\Qlbar$-vector spaces. Sheaves considered in this paper are \'etale complexes with $\Qlbar$-coefficients over $k$-stacks or $\ov k$-stacks. All sheaf-theoretic functors are understood to be derived.


\subsection*{Acknowledgement}
The authors are grateful to Masoud Kamgarpour and Lingfei Yi for discussions. KJ wishes to thank Michael Dettweiler and Stefan Reiter for teaching him rigid local systems. He especially thanks Jochen Heinloth for his continued support and lots of discussions about the geometric Langlands program.

\section{Euphotic representations}
In this section we introduce a class of representations of the $p$-adic group $G(F_{\infty})$ that generalize the {\em epipelagic representations} introduced by Reeder and Yu in \cite{RY14}.

This section concerns only the quasi-split group $G_{F_{\infty}}$ over $F_{\infty}$. We denote $G_{F_{\infty}}$ simply by $G$ in this section.  Using the affine coordinate $t$ on $X\setminus\{\infty\}$, we identify $F_\infty$ with $K=\ls{t^{-1}}$ and write $K_e=\ls{t^{-1/e}}$.

\subsection{Parahorics and gradings} 

\subsubsection{Affine roots and root subgroups}
The Lie algebra $\Lie G$ is the $\mu_{e}$-invariants on $\frg\ot K_{e}$ (where $\mu_{e}$ acts both on $K_{e}$ by Galois action and on $\frg$ via $\th_{\Out}$). The torus $A=\TT^{\mu_{e},\circ}$  (see \S\ref{no:group}) acts on  $\Lie G$ by the adjoint action and additionally this algebra carries a $\Gm$-action given by scaling the uniformizer $t^{-1/e}$ of $K_e$. The set of affine roots $\Psi_{\aff}$ with respect to $A$ can also be identified with the weights of $A\times \Gm$ for this action on $\Lie G$. The set of real roots $\Psi_\textup{re}\subset \Psi_{\aff}$ consists of those affine roots which are non-trivial on the torus $A$. \par 

Denote by $LG$ the loop group of $G$. For any real root $\alpha \in \Psi_{\re}$ there is a subgroup $U_\alpha \subset LG$  which is isomorphic to $\Ga$ over $k$ and whose Lie algebra is the $\alpha$-eigenspace under the action of $A\times \Gm$ on $\Lie G$, cf. \cite[Sections 2.1 \& 2.2]{Yun16}.

The Borel subgroup $\BB$ fixed in \S\ref{no:group} gives a set of simple affine roots $\D_{\aff}\subset \Psi_{\aff}$ and positive affine roots $\Psi^{+}_{\aff}\subset \Psi_{\aff}$.

\sss{Parahoric subgroups}
The maximal split torus $A\ot K$ of $G$ fixed in \S\ref{no:group} defines an apartment $\frA$ of the building of $G(K)$. Then $\Psi_{\aff}$ can be identified with a set of affine functions on $\frA$, whose vanishing affine hyperplanes give a stratification of $\frA$ into facets. There is a unique set of positive integers $\{n_{\a}\}_{\a\in \D_{\aff}}$ such that $\sum_{\a\in\D_{\aff}}n_{\a}\a=1$ as functions on $\frA$. 

The fundamental alcove $C\subset \frA$ is cut out by the inequalities $\a>0$ for all $\a\in \D_{\aff}$. Let $\frF\subset \ov C$ be a facet in the closure of $C$. Let $J=\{\a\in \D_{\aff}|\a|_{\frF}=0\}$. Let 
\begin{equation*}
m=m_{\frF}=\sum_{\a\in \D_{\aff}\bs J} n_{\a}\in \NN.
\end{equation*}
In the rest of the paper we also assume
\begin{equation*}
\mbox{The characteristic $p=\ch(k)$ does not divide $m$.}
\end{equation*}

Let $\bP\subset G(K)$ be the (standard) parahoric subgroup corresponding to $\frF$. Let $\bP\supset \bP^{+}\supset \bP^{++}$ be the first three steps in the Moy-Prasad filtration of $\bP$ with respect to $x_{\bP}$, the barycenter of $\frF$. In other words
\begin{equation*}
\bP^{+}=G(K)_{x_{\bP}, 1/m}, \quad \bP^{++}=G(K)_{x_{\bP}, 2/m}.
\end{equation*}
Let $L_{\bP}=\bP/\bP^{+}$ be the Levi factor of $\bP$ (a connected reductive group over $k$). There is a canonical section $L_{\bP}\incl \bP$ whose image contains $A$; we identify  $L_{\bP}$ with the image of this section. The quotient 
$V_{\bP}=\bP^{+}/\bP^{++}$ is a representation of $L_{\bP}$ over $k$. 

Let $\Psi(\bP)\subset \Psi_{\aff}$ be the affine roots that appear as the weights of $A\times \Gm$ on $\Lie\bP$; similarly define $\Psi(\bP^{+}), \Psi(\bP^{++}), \Psi(L_{\bP})$ and $\Psi(V_{\bP})$. Then  
\begin{eqnarray*}
\Psi(L_{\bP})=\{\a\in\Psi_{\aff}|\a(x_{\bP})=0\}; \quad \Psi(V_{\bP})=\{\a\in\Psi_{\aff}|\a(x_{\bP})=\frac{1}{m}\}.
\end{eqnarray*}

\sss{Cyclic grading on the Lie algebra} Let $\frg=\Lie\GG$. Then the barycenter $x_{\bP}$ gives a $\Zm$-grading on $\frg$ 
\begin{equation*}
\frg=\bigoplus_{i\in\Zm}\frg(i)
\end{equation*}
compatible with the $\ZZ/e\ZZ$-grading on $\frg$ obtained from $\th_{\Out}$ (under the reduction map $\Zm\to \ZZ/e\ZZ$) and such that $\frg(0)$ can be canonically identified with $\Lie(L_{\bP})$, and $\frg(i)$ can be canonically identified with $G(K)_{x_{\bP}, \frac{i}{m}}/G(K)_{x_{\bP}, \frac{i+1}{m}}$ for $i\ne0$. In particular we have $V_{\bP}\cong \frg(1)$ as $L_{\bP}$-modules. For more details, see \cite[Theorem 4.1]{RY14}.

\subsection{Euphotic representations}
Let $\bP$ be a standard parahoric subgroup of $G(K)$. Let $\psi\in V^{*}_{\bP}$ be a vector whose $L_{\bP}$-orbit is closed.  Using an $\Ad(\GG)$-invariant form on $\frg$, which exists by our assumption in \S\ref{no:group split}, the $L_{\bP}$-module $V^{*}_{\bP}$ may be identified with $\frg(-1)$. Then the $L_{\bP}$-orbit of $\psi$  is closed if and only if $\psi\in\frg(-1)$ is semisimple as an element of $\frg$.

Fix an additive character $\psi_{k}:k\to\Qlbar^{\times}$. For an admissible representation $(\pi, \calV)$ of $G(K)$ over $\Qlbar$, let
\begin{equation*}
\cV^{(\bP^{+}, \psi)}=\{v\in \cV|\pi(g)v=\psi_{k}(\psi(g))v, \forall g\in \bP^{+}\}.
\end{equation*}

\begin{defn}\label{d:qep} Let $\psi\in V^{*}_{\bP}$ be a vector whose $L_{\bP}$-orbit is closed. Let $(\pi, \calV)$ be an irreducible admissible representation of $G(K)$. We say that it is \textit{euphotic} with respect to $(\bP,\psi)$ if $\calV^{(\bP^+,\psi)}\neq 0$.
\end{defn}

\sss{Action of $L_{\psi}$} Let $L_{\psi}$ be the stabilizer of $\psi$ under $L_{\bP}$. Then $L_{\psi}$ is a (not necessarily connected) reductive group over $k$. In fact, since $\psi$ can be identified with a semisimple element of $\frg$ lying in $\frg(-1)$, its centralizer $\GG_{\psi}$ in $\GG$ is a reductive group whose Lie algebra $\frg_{\psi}$ is stable under the $\Zm$-grading. Viewing the $\Zm$-grading on $\frg$ as an action of $\mu_{m}$ on $\GG$, $L_{\bP}$ is the neutral component of $\GG^{\mu_{m}}$. Then $\GG_{\psi}$ is stable under the $\mu_{m}$-action, and $L_{\psi}\subset (\GG_{\psi})^{\mu_{m}}$ is the union of components that lie in $L_{\bP}$. 

Let $(\pi, \cV)$ be a euphotic representation of $G(K)$. There is an action of $L_{\psi}(k)$ on $\cV^{(\bP^{+}, \psi)}$. We will be interested in those $(\pi,\cV)$ such that $\cV^{(\bP^{+}, \psi)}$ contains a principal series representation of $L_{\psi}(k)$. More precisely, let $B_{\psi}\subset L_{\psi}^{\circ}$ be a Borel subgroup of  the neutral component $L_{\psi}^{\circ}$ of $L_{\psi}$. Let $T_{\psi}$ be the quotient torus of $B_{\psi}$.

\begin{defn}\label{d:qep chi} Let $\psi\in\frg(-1)$ be semisimple and let $\chi: T_{\psi}(k)\to \Qlbar^{\times}$ be a character. Let $(\pi, \calV)$ be an irreducible admissible representation of $G(K)$. We say that it is \textit{euphotic} with respect to $(\bP,\psi,\chi)$ if the action of $B_{\psi}(k)$ on $\calV^{(\bP^+,\psi)}$ contains a nonzero eigenvector on which $B_{\psi}(k)$ acts via the character $B_{\psi}(k)\to T_{\psi}(k)\xr{\chi}\Qlbar^{\times}$.
\end{defn}

By Frobenius reciprocity, an irreducible admissible $(\pi,\cV)$ is euphotic with respect to $(\bP,\psi,\chi)$ if and only if it is a quotient of the compact induction
\begin{equation*}
c-\ind^{G(K)}_{\bP^{+}\cdot B_{\psi}(k)}((\psi_{k}\circ\psi)\boxtimes\chi).
\end{equation*}

\subsection{Relation with epipelagic representations}\label{ss:H} For simplicity in this subsection we assume $G$ is split over $F$ (i.e., $e=1$) and $L_{\psi}^{\circ}$ is split over $k$. We lift $T_{\psi}$ to a maximal split torus $T_{\psi}\subset L_{\psi}$ over $k$. Up to changing $\psi$ by an element in the same $L_{\bP}$-orbit, we may assume $T_{\psi}\subset A$ ($A$ is a maximal split torus of $L_{\bP}$). Then $\HH=C_{\GG}(T_{\psi})$ is a Levi subgroup of $\GG$ containing $\TT$. From the construction, $\HH$ is stable under the $\mu_{m}$-action on $\GG$ which gives the $\Zm$-grading.

We claim that the induced $\Zm$-grading on $\frh=\Lie \HH$ is {\em stable} in the sense of \cite[\S5.3]{RLYG}. Indeed, it suffices to show that the stabilizer $(\HH_{\psi})^{\mu_{m}}$ is finite modulo $T_{\psi}$, but this is true because $(\HH_{\psi})^{\mu_{m}}$ is a reductive group containing $T_{\psi}$ as a maximal torus which at the same time is central, hence $(\HH_{\psi})^{\mu_{m}}/T_{\psi}$ is finite. The theory developed in \cite[Corollary 15]{RLYG} then attaches a {\em regular elliptic conjugacy class} $[w]$ in the extended Weyl group $W_{\mathrm{ext}}(\HH,\TT)$ ( component group of the normalizer of $\TT$ in $\Aut(\HH)$). One checks that $[w]$ indeed is a well-defined conjugacy class in $W(\GG,\HH,\TT)=(N_{\GG}(\HH)\cap N_{\GG}(\TT))/\TT$.

Let $H=\HH\ot_{k}K$, a Levi subgroup of $G$. The $\Zm$-grading on $\frh$ gives a standard parahoric subgroup $\bP_{H}\subset H$ and $\psi\in\frh(-1)$ can be viewed as a linear function on $\bP^{+}_{H}/\bP_{H}^{++}$. We expect that a euphotic representation of $G(K)$ with respect to $(\bP, \psi,\chi)$ should be a composition factor of a parabolic induction from an epipelagic representation of $H(K)$ with respect to $(\bP_{H}, \psi)$ in the sense of \cite{RY14}, whose central character restricts to $\chi$ on $T_{\psi}\subset ZH$.


\subsection{Predictions on the Langlands parameter}\label{ss:local L param} 
Again for simplicity we assume $e=1$, i.e., $G$ is split, and that $L_{\psi}$ is split over $k$.

\sss{} Let $\dG$ be the Langlands dual of $\GG$ (over $\Qlbar$) equipped with a maximal torus $\dT$ and an isomorphism $\xch(\dT)\cong \xcoch(\TT)$. Then the roots $\Phi(\dG,\dT)$ are identified with the coroots $\Phi^{\vee}(\GG,\TT)$. Recall the Levi subgroup $\HH$ introduced in \S\ref{ss:H}. Let $\dH\subset \dG$ be the Levi subgroup containing $\dT$ such that $\Phi(\dH,\dT)\subset \Phi(\dG,\dT)$ is identified with the set $\Phi^{\vee}(\HH,\TT)\subset \Phi^{\vee}(\GG,\TT)$.




\sss{}\label{sss:local L param} Let $W_K\supset I_K\supset I^+_K$ be the Weil group, inertia group and wild inertia of the local field $K$. Let $\pi$ be a euphotic representation of $G(K)$ with respect to $(\bP, \psi,\chi)$. Let $\r_{\pi}: W_{K}\to \dG$ be the Langlands parameter of $\pi$. 

For $p=\ch(k)$ large, we make the following predictions on $\r_{\pi}$:

\begin{enumerate}
\item\label{image NS} Consider the torus $\dS = [\dH, \dH] \cap \dT$. Then, up to $\dG$-conjugacy, one should be able to arrange that $\r_{\pi}(I^{+}_{K})\subset \dS$. We also expect that $\r_{\pi}(I^{+}_{K})$ is regular in $\dS$, i.e., its centralizer in $\dG$ is $C_{\dG}(\dS)$. This implies that $\r_{\pi}(I_{K})$ lies in the normalizer $N_{\dG}(\dS)$ of $\dS$ in $\dG$.  

\item\label{tame w} By \eqref{image NS}, $\r_{\pi}$ induces a homomorphism
$$
\r_{\pi}^{\tame}: I^{\tame}_{K}=I_{K}/I^{+}_{K} \to N_{\dG}(\dS)/\dS.
$$ 
The group $N_{\dG}(\dS)/\dS$ is a possibly disconnected reductive group with $\dT/\dS=\dH^{ab}$ as a maximal torus.   Let $\r_{\pi}^{\tame,ss}:I^{\tame}_{K}\to N_{\dS}(\dS)/\dS$ be the semisimplification of $\r_{\pi}^{\tame}$. 

There is an inclusion $N_{\dG}(\dH,\dT):=N_{\dG}(\dH)\cap N_{\dG}(\dT)\subset N_{\dG}(\dS)$. Then up to conjugacy $\r_{\pi}^{\tame,ss}$ should have image in $N_{\dG}(\dH,\dT)/\dS$. The composition
\begin{equation*}
I^{\tame}_{K}\xr{\r_{\pi}^{\tame,ss}} N_{\dG}(\dH,\dT)/\dS\to N_{\dG}(\dH,\dT)/\dT=:W(\dG,\dH,\dT)
\end{equation*}
should map a topological generator of $I^{\tame}_{K}$ to a regular elliptic element  $w\in W(\dG,\dH,\dT)=W(\GG,\HH,\TT)$ which is in the regular elliptic conjugacy class attached to $\bP$ in  \S\ref{ss:H}.

\item\label{rho ss} Let $N_{\dG}(\dH,\dT)'\subset N_{\dG}(\dH,\dT)$ be the preimage of the cyclic group $\j{w}\subset W(\dG,\dH,\dT)$.  Then by \eqref{tame w}, up to conjugacy $\r_{\pi}^{\tame,ss}$ should have image in $N_{\dG}(\dH,\dT)'/\dS$, which is an extension of $\j{w}$ by $\dT/\dS$. Since $w$ acts trivially on $T_{\psi}$, the projection $\dT\to\dT/\dS\to \dT_{\psi}$ dual to the inclusion $T_{\psi}\subset \TT$ extends to a homomorphism $N_{\dG}(\dH,\dT)'/\dS\to \dT_{\psi}$.  Then the composition
\begin{equation*}
I^{\tame}_{K}\xr{\r_{\pi}^{\tame,ss}} N_{\dG}(\dH,\dT)'/\dS\to  \dT_{\psi}
\end{equation*}
should correspond to the character $\chi$ of $T_\psi(k)$ under local class field theory.

\item The slopes of the adjoint representation $\Ad(\r_{\pi}): W_{K}\to \Aut(\dg)$ are either $0$ or $1/m$, and 
\begin{equation}\label{Sw}
\Swan(\Ad(\r_{\pi}))=\dim L-\dim L_{\psi}. 
\end{equation}
\end{enumerate}
The prediction on Swan conductors \eqref{Sw} is based on the following heuristics. The Swan conductor should be $1/m$ for each root $\a^{\vee}\in\Phi(\dG,\dT)=\Phi^{\vee}(\GG,\TT)$ which is nontrivial on the image $\r(I^{+}_{K})$, and should be zero on other root spaces and on $\dt$. Those $\a^{\vee}$ such that $\a^{\vee}|_{\r(I^{+}_{K})}=1$ correspond exactly to the coroots of the centralizer $\GG_{\psi}$ with respect to $\TT$. Let $R'=\{\a\in\Phi(\GG,\TT)|\a(\psi)\ne0\}$, then  $\Swan(\Ad(\r_{\pi}))=\#R'/m$. On the other hand, the $\mu_{m}$-action on $\frg$ preserves $R'$ and freely permutes $R'$. We get that $\frg(0)=\frg^{\mu_{m}}$ is the direct sum of $\frg_{\psi}\cap \frg(0)=\Lie L_{\psi}$ and $1$-dimension from the sum of root spaces for each $\mu_{m}$-orbit of $R'$. Therefore the number of $\mu_{m}$-orbits on $R'$, which is $\#R'/m$, is the same as $\dim L-\dim L_{\psi}$, hence the prediction \eqref{Sw}.

Let $I^{\tame}_{K}(m)\subset I^{\tame}_{K}$ be the unique subgroup of index $m$. Assume the character $\chi$ is sufficiently generic, then $\r^{\tame}_{\pi}$ will be semisimple and by \eqref{rho ss} above, $\r^{\tame}_{\pi}(I^{\tame}_{K}(m))$ should be conjugated into $\dT/\dS$. The genericity of $\chi$ should imply that the centralizer of $\r^{\tame}_{\pi}(I^{\tame}_{K}(m))$ in $C_{\dG}(\dS)/\dS$ is $\dT/\dS$. Therefore, for $\chi$ sufficiently generic,  we should have $\dG^{\r_{\pi}(I_{K})}=C_{\dG}(\dS)^{\r^{\tame}_{\pi}(I^{\tame}_{K})}=\dT^{w}$. Therefore we expect to have 
\begin{equation}\label{ginv}
\dim \dg^{\r_{\pi}(I_{K})}=\dim \dt^{w}= \dim T_{\psi} =\rk L_{\psi}. 
\end{equation}

\section{Euphotic automorphic data}
In this section we define euphotic automorphic data, and give a criterion for them to be rigid.

\subsection{Pre-euphotic automorphic data} 
\sss{Opposite parahorics} Let $\bP_{\infty}$ be a standard parahoric subgroup of $G(F_{\infty})$ corresponding to a facet $\frF\subset \ov C$ in the apartment $\frA_{\infty}$ of the building of $G(F_{\infty})$ corresponding to the split torus $A_{F_{\infty}}$. Let $\frA_{0}$ be the apartment  in the building of $G(F_{0})$ corresponding to the split torus $A_{F_{0}}$. Then there is a unique isomorphism $\frA_{\infty}\cong\frA_{0}$ characterized by the following two conditions
\begin{enumerate}
\item The natural action of $\xcoch(A)_{\RR}$ on $\frA_{\infty}$ corresponds to the {\em opposite} of the natural action of $\xcoch(A)_{\RR}$ on $\frA_{0}$.
\item The special vertex in $\frA_{\infty}$ corresponding to the parahoric $G(\cO_{\infty})$ maps to the special vertex in $\frA_{0}$ corresponding to the parahoric $G(\cO_{0})$.
\end{enumerate}
Let us denote both $\frA_{\infty}$ and $\frA_{0}$ by $\frA$ under this identification. Let $\bP_{0}$ be the parahoric subgroup of $G(F_{0})$ corresponding to the same facet $\frF$ that we used to define $\bP_{\infty}$.  We say $\bP_{0}$ thus constructed is {\em opposite} to $\bP_{\infty}$. Then the Levi factors $L_{\bP_{\infty}}$ and $L_{\bP_{0}}$ can be canonically identified, which we denote by $L_{\bP}$, or simply $L$. 



\begin{defn}\label{d:pre qep data} A {\em pre-euphotic automorphic datum} is a quadruple $(\bP_{\infty}, \psi,\chi, \bQ_{0})$ where
\begin{itemize}
\item $\bP_{\infty}$ is a standard parahoric subgroup of $G(F_{\infty})$;
\item $\psi\in V_{\bP_{\infty}}^{*}$ with closed orbit under $L$;
\item $\chi: T_{\psi}(k)\to \Qlbar^{\times}$ is a character.
\item Let $\bP_{0}\subset G(F_{0})$ be the parahoric subgroup opposite to $\bP_{\infty}$. Then $\bQ_{0}$ is a parahoric subgroup of $G(F_{0})$ which is contained in $\bP_{0}$ and contains the torus $A$. 
\end{itemize}
\end{defn}

The parahoric  $\bQ_{0}\subset \bP_{0}$ corresponds to a facet $\frF'$ in $\frA_{0}$ whose closure contains $\frF$. Let $x_{\bQ}$ be the barycenter of $\frF'$. The parahoric $\bQ_{0}$ also determines a parabolic subgroup $Q$ of $L=L_{\bP_{0}}$ (containing $A$) such that $\bQ_{0}$ is the preimage of $Q$ under the projection $\bP_{0}\to L$. 

\sss{Weyl groups}\label{sss:W} Let $\WW$ be the Weyl group of $\GG$ with respect to $\TT$. The Weyl group $W$ of $G$  with respect to $A$ can be identified with the fixed points $\WW^{\mu_{e}}$. The Iwahori-Weyl groups of $G(F_{\infty})$ and $G(F_{0})$ with respect to $A$ can be identified under the identification $\frA_{\infty}=\frA_{0}$; we denote it by $\tilW=\xcoch(A)\rtimes W$. For $w\in W$, choose a lifting of it in $N_{\GG}(\TT)(k)^{\mu_{e}}$; for an arbitrary element $w=(\l,w_{1})\in \xcoch(A)\rtimes W=\tilW$, we have its lifting $\dot w=t^{\l}\dot{w}_{1}$.

Let $W_{\aff}\subset \tilW$ be the affine Weyl group generated by affine simple reflections. Let $\Om=\Stab_{\tilW}(C)$, the stabilizer of the fundamental alcove under $\tilW$. Then the projection induces an isomorphism $\Om\isom \tilW/W_{\aff}$, and $\Om$ is a finite abelian group.  

Let $W_{\bP}$ (resp. $W_{\bQ}$) denote the Weyl group of $L$ (resp. $L_{\bQ}$, the Levi of $\bQ_{0}$ or $Q$), both as subgroups of $W_{\aff}$. We have $W_{\bQ}\subset W_{\bP}$.


%

\subsection{The space of automorphic forms}

\sss{A space of automorphic forms}\label{sss:auto conditions} We will impose more conditions on the data $(\bP_{\infty}, \psi,\chi, \bQ_{0})$. To motivate these conditions, we consider automorphic representations $\pi=\ot'_{x\in |X|}\pi_{x}$ of $G(\AA_{F})$ for which
\begin{enumerate}
\item $\pi_{x}$ is unramified for $x\ne 0,\infty$;
\item $\pi_{0}^{\bQ_{0}}\ne0$;
\item $\pi_{\infty}$ is euphotic with respect to $(\bP_{\infty}, \psi,\chi)$ in the sense of Definition \ref{d:qep chi}.
\end{enumerate}

Any such automorphic representation $\pi$ contains a nonzero vector in the following space
\[\calF=\textup{Fun}\left(G(F)\backslash G(\AA_F)/( \bQ_0 \times \prod_{x\neq 0,\infty} G(\calO_x))\right)^{(B_\psi \bP_\infty^+,\mu)} \]
of $\Qlbar$-valued functions on which $B_\psi\bP_\infty^+\subset \bP_{\infty}$ acts via the character $\mu$ defined by $\mu|_{B_\psi}=\chi$ and $\mu|_{\bP_\infty^+}=\psi_k\circ\psi$. 
Let $\Gamma_0=G(k[t,t^{-1}]) \cap \bQ_{0}$. Through the equality 
\begin{equation}\label{dcos}
G(F)\backslash G(\AA_F)/ (\bQ_0 \times \prod_{x\neq 0,\infty} G(\calO_x)\times \bP_\infty^{++})=  \bP_\infty^{++}\backslash G(F_\infty)/\Gamma_0 
\end{equation}
of double cosets, cf. \cite[2.12]{Yun16} we identify
\[\calF=\textup{Fun}(G(F_\infty)/\Gamma_0)^{(B_\psi\bP_\infty^+,\mu)} .\]
We have the Birkhoff decomposition \cite[3.2]{Yun16}
\begin{equation}\label{Birk}
G(F_\infty)=\coprod_{[w]\in W_{\bP} \backslash \tilW/W_{\bQ} }\bP_\infty \dot{w}\Gamma_0.
\end{equation}
From this we get a decomposition
\begin{equation*}
\cF=\bigoplus_{[w]\in W_{\bP}\backslash \tilW / W_{\bQ}}\cF_{w}
\end{equation*}
where $\cF_{[w]}$ is the subspace of functions supported on $\bP_{\infty}\dot{w}\Gamma_{0}$.

\sss{The space $\cF_{[w]}$}\label{sss:Yw}
Let $w\in \tilW$. We have
\begin{equation}\label{QwL}
\bP_{\infty}^{+}\bs \bP_{\infty}\dot{w}\Gamma_{0}/\Gamma_{0}= L(k)/ Q_{w}(k)=( L/Q_{w})(k)
\end{equation}
where $Q_{w}=\Ad(w)\Gamma_{0}\cap L=\Ad(w)\bQ_{0}\cap L$ is the parabolic subgroup of $L$ containing $A$ with roots
\begin{equation*}
\Psi(Q_{w})=\{\a\in \Psi_{\re}|\a(x_{\bP})=0, \a(w x_{\bQ})\le0\}.
\end{equation*}

Let $f\in \calF_{w}$. Assume $f$ is not identically zero on the double coset $\bP_\infty^+\ell\dot{w}\Gamma_0$ for some $\ell\in L$. For any $\alpha\in \Psi(V_{\bP})$ (i.e., $\a(x_{\bP})=1/m$)  and $u\in U_\alpha$ we have
\[f(\ell u\dot w)={}^{\ell^{-1}}\psi(u)f(\ell \dot w).\] 
If we furthermore assume that $\alpha(w x_{\bQ})\le 0$, then $U_{w^{-1}\alpha} \subset \Gamma_0$ and  we find that
\[f(\ell\dot{w})=f(\ell\dot{w}\Ad(\dot{w}^{-1})(u))=f(\ell u \dot{w})={}^{\ell^{-1}}\psi(u)f(\ell\dot{w}).\]
This implies that ${}^{\ell^{-1}}\psi$ vanishes on the space
\[V_w:={\bigoplus_{\substack{ \alpha(x_{\bP})=1/m\\ \alpha(wx_{\bQ} ) \le 0 }}} V_{\bP}(\a).\]
In other words
\begin{equation*}
{}^{\ell^{-1}}\psi\in V_{w}^{\bot}=\bigoplus_{\substack{\alpha(x_{\bP})=-1/m\\ \alpha(w x_{\bQ} )<0} } V^{*}_{\bP}(\a)\subset V_{\bP}^{*}
\end{equation*}
We may alternatively write $V_{w}^{\bot}$ into a sum of $A$-weight spaces
\begin{equation*}
V_w^\bot= \bigoplus_{\langle \abar,w x_{\bQ}-x_{\bP}\rangle < \frac{1}{m}}V_{\bP}^*(\abar). 
\end{equation*}
Note that $V_{w}^{\bot}$ is stable under $Q_{w}$. \par

\begin{defn} Let $Y_{w}$ be the closed subscheme of the partial flag variety $L/Q_{w}$ defined by
\begin{equation*}
Y_{w}=\{ \ell\cdot Q_{w} \in L/Q_{w} \mid \,^{\ell^{-1}}\psi\in V_{w}^\bot\}.
\end{equation*}
\end{defn}

\begin{remark}
\begin{enumerate}
\item The variety $Y_{w}$ is a Hessenberg variety in the sense of \cite{GKM}, attached to the $L$-module $V_{\bP}^{*}$ and the $Q_{w}$-stable subspace $V_{w}^{\bot}$. It carries an action of the stabilizer $L_{\psi}$ by  right translation.

\item If $w=1$, or more generally if $\frF_{\bP}$ is in the closure of $w\frF_{\bQ}$, then $Y_{w}=L/Q_{w}$.

\item If we change $w$ to $ww_{1}$ for some $w_{1}\in W_{\bQ}$, then both $Q_{w}$ and $V^{\bot}_{w}$ are unchanged, hence $Y_{w}=Y_{ww_{1}}$. 
\item If we change $w$ to $w_{2}w$ for some $w_{2}\in W_{\bP}$, then we may lift $w_{2}$ to $\ddot w_{2}\in L$, and right multiplication by $\ddot w_{2}$ induces an isomorphism $ L/Q_{w}\isom L/Q_{w_{2}w}$ which restricts to an isomorphism $Y_{w}\isom Y_{w_{2}w}$.
\end{enumerate}
\end{remark}

The above discussion implies that any function $f\in \cF_{[w]}$ must be supported on $Y_{w}(k)$ (as a function on $(L/Q_{w})(k)$ under \eqref{QwL}). In view of the eigen property under $B_{\psi}$, we get the following description of $\cF_{w}$. 

\begin{lemma}\label{l:Fw} For any $w\in \tilW$, let $[w]$ be its $(W_{\bP},W_{\bQ})$ coset in $\tilW$. Then there is a canonical isomorphism
\begin{equation*}
\cF_{[w]}\cong \textup{Fun}(Y_{w}(k))^{(B_{\psi}(k), \chi)}
\end{equation*}
where the right side is the space of eigenfunctions on $Y_{w}(k)$ under the left translation of $B_{\psi}(k)$ with eigen-character $\chi$. The isomorphism is given by $\cF_{w}\ni f\mapsto f_{w}$ where $f_{w}(\ell Q_{w})=f(\ell \dot w\Gamma_{0})$ for all $\ell Q_{w}\in Y_{w}(k)$.
\end{lemma}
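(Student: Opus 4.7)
The plan is to construct mutually inverse maps between $\cF_{[w]}$ and $\textup{Fun}(Y_w(k))^{(B_\psi(k), \chi)}$. In one direction, given $f \in \cF_{[w]}$, the formula $f_w(\ell Q_w) := f(\ell \dot w \Gamma_0)$ is well-defined on $L(k)/Q_w(k) = (L/Q_w)(k)$ because $Q_w = \Ad(w)\Gamma_0 \cap L$ implies $\dot w^{-1} Q_w \dot w \subset \Gamma_0$. The calculation immediately preceding the lemma shows $f_w$ is supported on $Y_w(k)$, and the left $B_\psi(k)$-eigen property with character $\chi$ passes from $f$ to $f_w$ directly.

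In the other direction, given $g$ on $Y_w(k)$ satisfying the specified eigen property, extend $g$ by zero to $\tilg$ on $(L/Q_w)(k)$ and define $f$ on $\bP_\infty \dot w \Gamma_0$ by
\[ f(u \ell \dot w \Gamma_0) := \psi_k(\psi(u))\, \tilg(\ell Q_w), \qquad u \in \bP_\infty^+, \ \ell \in L(k), \]
using the unique decomposition $\bP_\infty = \bP_\infty^+ \rtimes L$, and extend by zero elsewhere on $G(F_\infty)/\Gamma_0$. The required left eigen-properties under $B_\psi(k)\bP_\infty^+$ follow from this formula together with normality of $\bP_\infty^+$ in $\bP_\infty$ and the fact that $B_\psi(k)\subset L_\psi$ fixes $\psi$. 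The nontrivial point is well-definedness under the right $\Gamma_0$-action: if $(u_1 \ell_1)\dot w \Gamma_0 = (u_2 \ell_2)\dot w \Gamma_0$, then $h := (u_2\ell_2)^{-1}(u_1\ell_1)$ lies in $H := \bP_\infty \cap \dot w \Gamma_0 \dot w^{-1}$. Decomposing $h = u^\ast m$ in $\bP_\infty^+ \rtimes L$ yields $m = \ell_2^{-1}\ell_1$ and $u^\ast = \ell_2^{-1} u_2^{-1} u_1 \ell_2 \in \bP_\infty^+$. Since the image of $H$ in $L$ is $Q_w$ (split by $Q_w \subset L \subset \bP_\infty$), we have $m \in Q_w$ and $u^\ast \in H^+ := H \cap \bP_\infty^+$. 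Thus $\tilg(\ell_1 Q_w) = \tilg(\ell_2 Q_w)$; and using $u_2^{-1} u_1 = \ell_2 u^\ast \ell_2^{-1}$, the remaining identity $\psi(u_1) = \psi(u_2)$ reduces to $({}^{\ell_2^{-1}}\psi)(u^\ast) = 0$. When $\tilg(\ell_2 Q_w) \ne 0$, we have $\ell_2 Q_w \in Y_w(k)$, so ${}^{\ell_2^{-1}}\psi$ vanishes on $V_w$; the containment $H^+ \bmod \bP_\infty^{++} \subset V_w$, an affine root subgroup analysis parallel to the one preceding the lemma, then yields the required vanishing.

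The two maps are mutually inverse: extension followed by restriction recovers $g$ upon evaluating at $u = 1$ (using $\psi(1) = 0$), and restriction followed by extension recovers $f$ by its left $\bP_\infty^+$-eigen property. The main technical obstacle is the final containment $H^+ \bmod \bP_\infty^{++} \subset V_w$, which amounts to verifying that each affine root subgroup $U_\alpha$ with $\alpha(x_\bP) = 1/m$ and $U_{w^{-1}\alpha} \subset \Gamma_0$ satisfies $\alpha(w x_\bQ) \le 0$, essentially reading the calculation preceding the lemma in reverse.
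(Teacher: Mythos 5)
Your proof is correct and supplies exactly what the paper leaves implicit (the Lemma is stated with no proof block, relying on the calculation in \S\ref{sss:Yw}). You rightly identify that the only genuinely nontrivial point is the surjectivity of the restriction map, i.e.\ well-definedness of the zero-extension, and your reduction of this to the containment $H^{+}\bmod \bP_{\infty}^{++}\subset V_{w}$ together with ${}^{\ell^{-1}}\psi|_{V_{w}}=0$ on $Y_{w}(k)$ is the correct way to close that gap; the remaining parts (well-definedness modulo $Q_{w}$ via $\dot w^{-1}Q_{w}\dot w\subset\Gamma_{0}$, the $(B_{\psi},\chi)$-eigenproperty, and injectivity of restriction from the $\bP_{\infty}^{+}$-eigencondition) are exactly the content of the displayed formulas preceding the Lemma.
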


\subsection{Spectrally meager varieties} In examining when the space $\cF_{w}$ is zero we arrive at the following notion.

\begin{defn}\label{d:sp meager}
\begin{enumerate}
\item Let $H$ be a connected reductive group over $k$ with Borel subgroup $B_{H}$. Let $Y$ be a scheme of finite type over $k$ with an $H$-action. We say that $Y$ is {\it spectrally meager} if for any geometric point $y\in Y(\ov k)$ the stabilizer $\Stab_{B_{H}}(y)$ contains a nontrivial torus. 
\item If $Y$ is a spectrally meager $H$-scheme, let $\cS(Y)$ be the collection of (nontrivial) subtori of $T_{H, \ov k}$ (the universal Cartan of $H$) given by the images of $\Stab_{B_{H}}(y)^{\circ}\to T_{H}$ for all $y\in Y(\ov k)$. 
\end{enumerate}
\end{defn}

\begin{remark}
\begin{enumerate}
\item The definition of spectrally meager $H$-scheme does not depend on the choice of the Borel subgroup $B_{H}$. It is therefore intrinsic to the $H$-scheme $Y$. 
\item If $Y$ is a spectrally meager $H$-scheme, the collection $\cS(Y)$ of subtori  is finite. Indeed, we may partition $Y$ into finitely many locally closed connected $B_{H}$-stable subschemes $\{Y_{\a}\}$ such that the torus part of the stabilizer of $B_{H}$ on each point of $Y_{\a}$ has the same dimension, then each $Y_{\a}$ contributes a single torus in $\cS(Y)$.
\item The terminology ``spectrally meager'' may be justified as follows. 

Let $\chi: T_{H}(k)\to \Qlbar^{\times}$ be a character with the property that $\chi|_{S(k)}\ne 1$ for any $S\in\cS(Y)$ defined over $k$, then the $H(k)$-module $\Fun(Y(k))$ does not contain any simple constituent of the principal series representation $\Ind_{B_{H}(k)}^{H(k)}(\chi)$. The same property holds after any finite base change $k'/k$.

On the other hand, making the obvious definition over $\CC$, if $Y$ is an affine $H$-variety  over $\CC$ which is spectrally meager, then $\cO(Y)$ as an algebraic $H$-module contains the  irreducible $H$-module $V_{\l}$ with highest weight $\l$ only if $\l\in \xcoch(S)^{\bot}$ for some $S\in \cS(Y)$, i.e., $\l$ lies in the union of finitely many proper sublattices in $\xch(T)$.
\end{enumerate}
\end{remark}

\begin{cor}[of Lemma \ref{l:Fw}]\label{c:Fw zero} Let $w\in \tilW$. If $Y_{w}$ is spectrally meager as an $L_{\psi}$-scheme,  and $\chi$ is nontrivial on $S(k)$ for any torus $S\in \cS(Y_{w})$ that is defined over $k$, then $\cF_{[w]}=0$.
\end{cor}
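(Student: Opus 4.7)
The plan is to reduce via Lemma \ref{l:Fw} to showing $\Fun(Y_{w}(k))^{(B_{\psi}(k),\chi)}=0$, then verify this pointwise using the spectrally meager hypothesis: for each $y_{0}\in Y_{w}(k)$ I will produce $b\in \Stab_{B_{\psi}(k)}(y_{0})$ with $\chi(b)\ne 1$, so that the eigen-equation $f(y_{0})=f(by_{0})=\chi(b)f(y_{0})$ forces $f(y_{0})=0$ for any such eigenfunction $f$.

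To construct $b$, I start from the stabilizer subgroup scheme $H:=\Stab_{B_{\psi}}(y_{0})$, a closed $k$-subgroup scheme of $B_{\psi}$. Replacing $H^{\circ}$ by its reduced subscheme if necessary (legitimate since $k=\FF_{q}$ is perfect), $H^{\circ}$ is a smooth connected solvable $k$-group, so it admits a Levi decomposition $H^{\circ}=T_{H}\ltimes U_{H}$ with $T_{H}$ a maximal $k$-rational torus (whose existence over a finite field follows from Lang--Steinberg). The spectrally meager hypothesis forces $H_{\ov k}$ to contain a nontrivial torus, and conjugacy of maximal tori in $H^{\circ}_{\ov k}$ then implies that $T_{H}$ itself is nontrivial. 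The projection $\pi:B_{\psi}\surj T_{\psi}=B_{\psi}/U_{\psi}$ is injective on $T_{H}$ (any torus meets $U_{\psi}$ trivially), and $\pi(U_{H})=1$, so $S:=\pi(T_{H})=\pi(H^{\circ})$ is a nontrivial $k$-subtorus of $T_{\psi}$, $k$-isomorphic to $T_{H}$; by Definition \ref{d:sp meager} it is precisely the element of $\cS(Y_{w})$ attached to $y_{0}$.

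By hypothesis $\chi|_{S(k)}$ is nontrivial, so one can pick $s\in S(k)$ with $\chi(s)\ne 1$ and transport it through the $k$-isomorphism $T_{H}\isom S$ to the desired element $t\in T_{H}(k)\subset \Stab_{B_{\psi}(k)}(y_{0})$ satisfying $\chi(t)=\chi(s)\ne 1$, producing the contradiction. The only technical worry is smoothness and $k$-rational Levi decomposability of the stabilizer scheme; this is automatic in the large-characteristic regime in force, and in any event passing to $(H^{\circ})_{\textup{red}}$ over the perfect field $k$ handles it without loss. The essential conceptual content of the argument is therefore the observation that a $k$-rational torus inside the stabilizer always projects isomorphically onto the very torus $S$ that the hypothesis on $\chi$ is designed to rule out.
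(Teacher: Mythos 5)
Your proof is correct and follows the same strategy as the paper's: for each $k$-point $y$ of $Y_{w}$ you exhibit an element of $\Stab_{B_{\psi}(k)}(y)$ on which the inflation of $\chi$ is nontrivial, forcing every $(B_{\psi}(k),\chi)$-eigenfunction to vanish at $y$, whence $\cF_{[w]}=0$ by Lemma \ref{l:Fw}. The only difference is that you spell out the lifting step --- constructing a $k$-rational maximal torus $T_{H}\subset\Stab_{B_{\psi}}(y)^{\circ}$ mapping isomorphically onto $S\in\cS(Y_{w})$ so that a point $s\in S(k)$ with $\chi(s)\ne 1$ actually comes from the stabilizer --- which the paper's one-line argument leaves implicit.
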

\begin{proof}
For any $y\in Y_{w}(k)$, the stabilizer $\Stab_{B_{\psi}}(y)$ maps to a nontrivial torus $S\in \cS(Y_{w})$. Since $\chi|_{S(k)}\ne 1$, all $(B_{\psi}(k), \chi)$-eigenfunctions on $Y_{w}(k)$ must vanish at $y$. We conclude that $\cF_{w}=0$ by Lemma \ref{l:Fw}. 
\end{proof}


\subsection{Euphotic automorphic data}

\begin{defn}\label{d:qep data} A pre-euphotic automorphic datum $(\bP_{\infty}, \psi,\chi,\bQ_{0})$ is called a {\em euphotic automorphic datum} if it satisfies the following conditions:
\begin{enumerate}
\item\label{open B orbit}  For $w\in \Om$, any Borel subgroup $B_{\psi}$ of $L_{\psi}$ acts on $Y_{w}$ with an open orbit with finite stabilizers.

\item For any $w\in \tilW-W_{\bP}\Om W_{\bQ}$,  the $L_{\psi}$-scheme $Y_{w}$ is spectrally meager. 

\item Let $\cK_{\chi}$ be the Kummer local system on $T_{\psi}$ attached to $\chi$ (see \cite[Appendix A.3.5]{Yun14}). Then for any $S\in \cup_{w\notin W_{\bP}\Om W_{\bQ}}\cS(Y_{w})$ (this is a subtorus of $T_{\psi,\kbar}$ over $\ov k$), the restriction $\cK_{\chi}|_{S}$ is a nontrivial local system.
\end{enumerate}
We call the euphotic automorphic datum $(\bP_{\infty},\psi,\chi,\bQ_{0})$ {\em strict} if moreover the following holds:
\begin{enumerate}
\item For any $w\in \Omega$ and any $x\in (L/Q_w)(\ov k)$ outside the open $B_{\psi}$-orbit let $S_{x}$ be the image of $\Stab_{B_{\psi}}(x)^{\circ}\to T_{\psi, \ov k}$ and assume that $\cK_{\chi}|_{S_{x}}$ is a nontrivial local system (in particular, $S_{x}$ is a nontrivial torus).
\item The stabilizer on the open $B_{\psi}$-orbit is $ZG$.
\end{enumerate}
\end{defn}

\begin{remark}\label{r:sph}
\begin{enumerate}
\item The conditions for a pre-euphotic automorphic datum to be a euphotic automorphic datum can be checked after base changing the situation to $\ov k$.
\item The open $B_{\psi}$-orbit condition in Definition \ref{d:qep data} is saying that $Y_{w}$ is a spherical $L_{\psi}$-variety. This implies that $B_{\psi}$ has finitely many orbits on $Y_{w}$ by \cite{Br86}.
\item As $w$ varies in $\tilW$, there are only finitely many different $L_{\psi}$-equivariant isomorphism types of the schemes $Y_{w}$. Indeed, there are only finitely many possibilities for $Q_{w}$ and $V_{w}$. Therefore the union $\cup_{w\notin W_{\bP}\Om W_{\bQ}}\cS(Y_{w})$ is a finite set.
\end{enumerate}
\end{remark}

\begin{prop}\label{c:fdim} Let $(\bP_{\infty},\psi,\chi, \bQ_{0})$ be a euphotic automorphic datum. Then 
\begin{enumerate}
\item The space $\calF$ is finite-dimensional and consists of cusp forms. In particular, any automorphic representation $\pi$ satisfying the conditions in \S\ref{sss:auto conditions} is cuspidal.
\item For any finite field extension $k'/k$, consider the similarly defined space $\cF^{k'}$ using the base change automorphic data $(\bP_{\infty},\psi,\chi, \bQ_{0})$. Then $\dim_{\Qlbar}\cF^{k'}$ is bounded independent of $k'$.
\end{enumerate}
\end{prop}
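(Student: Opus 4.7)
The plan is to combine the Birkhoff decomposition with the spectrally meager conditions to force most components $\cF_{[w]}$ to vanish, and to use sphericity to control the dimension of the remaining contributions.

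By \eqref{Birk} and Lemma \ref{l:Fw}, $\cF=\bigoplus_{[w]\in W_{\bP}\bs\tilW/W_{\bQ}}\cF_{[w]}$ with $\cF_{[w]}\cong\Fun(Y_{w}(k))^{(B_{\psi}(k),\chi)}$. I would partition the index set into those $[w]$ having a representative in $W_{\bP}\Om W_{\bQ}$ (at most $|\Om|<\infty$ of them) and those without. For $[w]\notin W_{\bP}\Om W_{\bQ}$, condition (2) of Definition \ref{d:qep data} says $Y_{w}$ is spectrally meager and condition (3) says $\cK_{\chi}|_{S}$ is geometrically nontrivial for every $S\in\cS(Y_{w})$. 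For any $y\in Y_{w}(k)$ the associated torus $S\in\cS(Y_{w})$ is automatically defined over $k$, and geometric nontriviality of the Kummer local system $\cK_{\chi}|_{S}$ translates into nontriviality of the Frobenius-trace character $\chi|_{S(k)}$ (the Lang--Kummer dictionary for connected tori); hence the hypothesis of Corollary \ref{c:Fw zero} is met and $\cF_{[w]}=0$.

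For the (finitely many) surviving cosets, condition (1) of Definition \ref{d:qep data} says $Y_{w}$ has an open $B_{\psi}$-orbit, so by Remark \ref{r:sph}(2) there are only finitely many $B_{\psi}$-orbits on $Y_{w}$. A $(B_{\psi}(k),\chi)$-eigenfunction is determined by one scalar on each $B_{\psi}(k)$-orbit of $Y_{w}(k)$, and Lang's theorem ensures each geometric $B_{\psi}$-orbit defined over $k$ yields at most one $B_{\psi}(k)$-orbit on $Y_{w}(k)$. Therefore
\begin{equation*}
\dim_{\Qlbar}\cF_{[w]}\le\#\{B_{\psi}\textup{-orbits on }Y_{w}(\ov k)\},
\end{equation*}
a finite geometric invariant. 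Running the identical argument over any finite extension $k'/k$ gives the uniform bound needed for (2), since conditions (1)--(3) are geometric and descend unchanged, and the orbit count is Galois-invariant.

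For cuspidality, I would observe that via \eqref{dcos} every $f\in\cF$ is supported on finitely many $\bP_{\infty}\dot{w}\Gamma_{0}$ double cosets, so $f$ has compact support on $G(F)\bs G(\AA_{F})$ modulo center. A smooth automorphic form on a function field that is compactly supported modulo center is automatically a cusp form by the standard argument (a nonzero constant term along a proper parabolic would force unbounded support in the Levi direction). Irreducibility of $\pi$ then yields cuspidality of any $\pi$ satisfying the conditions in \S\ref{sss:auto conditions}. The most delicate step is the Lang--Kummer translation in the second paragraph; the rest is routine bookkeeping.
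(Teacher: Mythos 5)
Your treatment of the Birkhoff decomposition, the vanishing of $\cF_{[w]}$ for $[w]\notin W_{\bP}\Om W_{\bQ}$ via the spectrally meager condition and the Lang--Kummer translation, and the finiteness of the remaining components via sphericity all track the paper's argument and are fine. However, there are two genuine errors.

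The serious one is cuspidality. Your claim that a compactly supported smooth automorphic form is automatically a cusp form is false, and the ``standard argument'' you gesture at goes in the opposite direction: the theorem is that \emph{cusp} forms have compact support modulo center, not the converse. For $G=\SL_2$ over $\FF_q(t)$, the indicator function of the trivial bundle on $\PP^1$ is compactly supported in $\Bun_G$ but its constant term along $B$ (which counts degree-$0$ line subbundles of $\cO^2$) is nonzero, so it is not cuspidal. Compact support alone gives you nothing. The correct argument, as the paper uses, is that $\cF$ is finite-dimensional \emph{and} stable under the unramified Hecke operators at all places $x\notin\{0,\infty\}$; a finite-dimensional Hecke-stable space of automorphic functions automatically consists of cusp forms by \cite[Lemme 8.24]{La18} (the proof there uses the Hecke action, roughly because a nonzero constant term along a proper parabolic would, under the unramified Hecke algebra of the Levi, generate an infinite-dimensional space). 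Without the Hecke-stability you cannot conclude.

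The second, smaller gap is in your dimension bound. You assert that Lang's theorem ensures each geometric $B_\psi$-orbit defined over $k$ contributes at most one rational $B_\psi(k)$-orbit on $Y_w(k)$. This is only true if the stabilizer $\Stab_{B_\psi}(y)$ is connected. When it is disconnected, the set of rational orbits inside a single geometric orbit is in bijection with a quotient of $H^1(\Gal(\kbar/k),\Stab_{B_\psi}(y))$, which can have size up to $\#\pi_0(\Stab_{B_\psi}(y))$. The correct bound is therefore $\dim_{\Qlbar}\cF_{[w]}\le\sum_x\#\pi_0(\Stab_{B_\psi}(x))$ with $x$ running over representatives of $Y_w(\ov k)/B_\psi(\ov k)$; this geometric quantity is still finite and independent of $k'$, so part (2) survives, but your stated inequality $\dim\cF_{[w]}\le\#\{B_\psi\text{-orbits on }Y_w(\ov k)\}$ is not correct as written.
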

\begin{proof}
(1) For any $w\in  \tilW-W_{\bP}\Om W_{\bQ}$,  the assumptions in Corollary \ref{c:Fw zero} are satisfied (for any $S\in \cS(Y_{w})$ defined over $k$, $\chi|_{S(k)}$ is nontrivial if and only if $\cK_{\chi}|_{S_{\ov k}}$ is nontrivial). Therefore $\cF_{[w]}=0$.

If $w\in \Om$,  the space $\cF_{[w]}=\textup{Fun}(Y_{w}(k))^{(B_{\psi}(k),\chi)}$ has finite dimension because there are finitely many $B_{\psi}$-orbits on $Y_{w}$ over $\ov k$ (see Remark \ref{r:sph}(2)), hence finitely many rational orbits as well.  we conclude that $\cF$ is finite-dimensional and stable under the spherical Hecke operators at all places $x\notin\{0,\infty\}$. By \cite[Lemme 8.24]{La18}, $\cF$ consists of cusp forms.


(2) The universal bound for $\dim\cF^{k'}$ comes from bounding the number of $B_{\psi}(k')$-orbits on $Y_{w}(k')$ for $w\in \Om$. The number of such orbits are bounded by $\sum_{x}\#\pi_{0}(\Stab_{B_{\psi}}(x))$ where $x$ runs over a set of representatives of the finite set $Y_{w}(\ov k)/B_{\psi}(\ov k)$.
\end{proof}

\begin{remark}
One may generalize the notion of a (pre-)euphotic automorphic datum by adding a character $\y$ of $L_{\bQ}(k)$ (or a rank one character local system $\cK_{\y}$ on $L_{\bQ}$). We leave it to the reader to modify the third condition in Definition \ref{d:qep data} in this situation (which should involve $\cK_{\chi}$ and $\cK_{\y}$).
\end{remark}

\section{Hecke eigencategory and local systems}\label{s:sh}

Unless otherwise stated, in this subsection all $\ell$-adic sheaves are over the relevant spaces base-changed to $\ov k$.

\subsection{Automorphic sheaves}\label{ss: auto sheaves}
Let $(\bP_{\infty},\psi,\chi,\bQ_{0})$ be a euphotic automorphic datum. 
\sss{A category of automorphic sheaves} 
We denote by
\begin{equation*}
\Bun:=\Bun_G(\bQ_0, \bP_\infty^{++})
\end{equation*}
the moduli stack of $G$-bundles on $\PP^{1}$ with level structure $\bQ_0$ at $0$ and $\bP_\infty^{++}$ at $\infty$. It carries an action of $B_{\psi}\ltimes V_{\bP}$ by changing the level structure at $\infty$. The character $\psi_{k}\circ\psi:V_\bP=\bP_\infty^{+}/\bP_\infty^{++}\rightarrow \Qlbar^*$ determines an Artin-Schreier sheaf $\AS_\psi$ on $V_{\bP}$.  Similarly we get a Kummer sheaf on $T_\psi$ whose pullback along $B_\psi \rightarrow T_\psi$ we denote by $\calK_\chi$.

Let $\calD(\psi,\chi)$ be the derived category of $\Qlbar$-complexes on $\Bun_{\ov k}$ with $(B_{\psi,\ov k} \ltimes V_{\bP,\ov k},\cK_{\chi}\boxtimes \AS_{\psi})$-equivariant structures. In \cite{Yun14}, a more elaborate notion of geometric automorphic data is defined, including the data of a character sheaf on the center $ZG$. In our case we simply take the trivial local system on $ZG$. For the details we refer to \cite[Section 2.6]{Yun14}. \par

More generally, for any scheme $S$ over $k$, we define $\cD(S, \psi,\chi)$ to be the  derived category of $\Qlbar$-complexes on $S_{\ov k}\times_{\ov k} \Bun_{\ov k}$ with $(B_{\psi,\ov k} \ltimes V_{\bP,\ov k},\cK_{\chi}\boxtimes \AS_{\psi})$-equivariant structures.

Let $\cP(\psi,\chi)\subset \cD(\psi,\chi)$ and $\cP(S, \psi,\chi)\subset \cD(S, \psi,\chi)$ be the full abelian subcategory of perverse sheaves.

\begin{lemma}\label{l:simple obj}
\begin{enumerate}
\item The category  $\cP(\psi,\chi)$ has finitely many simple objects up to isomorphism.
\item Let $S$ be a scheme of finite type over $k$. Any simple perverse sheaf in $\cP(S,\psi,\chi)$ is of the form $\cF_{S}\boxtimes \cA$, where $\cF_{S}$ is a simple perverse sheaf on $S_{\ov k}$, and  $\cA\in \cP(\psi,\chi)$ is a simple object.
\end{enumerate}
\end{lemma}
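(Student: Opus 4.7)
The plan is to stratify $\Bun_{\ov k}$ via the Birkhoff decomposition, reduce equivariant perverse sheaves on each stratum to $(B_\psi, \cK_\chi)$-equivariant perverse sheaves on the Hessenberg variety $Y_w$, and then use the spectrally meager hypothesis to kill most strata, in direct analogy with the function-theoretic arguments of \S\ref{sss:Yw} and Corollary \ref{c:Fw zero}.

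For part (1), the Birkhoff decomposition \eqref{Birk} globalizes to a stratification of $\Bun_{\ov k}$ by locally closed substacks $\Bun_{[w]}$ indexed by $[w]\in W_\bP\backslash \tilW/W_\bQ$. Repeating the linear-functional vanishing argument of \S\ref{sss:Yw} at the sheaf level, I would identify the category of $(V_\bP\times T_\psi,\AS_\psi\boxtimes\cK_\chi)$-equivariant perverse sheaves supported on $\Bun_{[w]}$ with (up to shift) the category of $(B_\psi,\cK_\chi)$-equivariant perverse sheaves on $Y_w$: the $\AS_\psi$-equivariance forces the $V_\bP$-support to factor through $V_w^\bot$, which cuts out $Y_w\subset L/Q_w$. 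For $w\notin W_\bP\Om W_\bQ$, the spectrally meager condition together with the nontriviality of $\cK_\chi|_S$ for each $S\in\cS(Y_w)$ forces these equivariant perverse sheaves to vanish: on any $B_\psi$-orbit $\cO\subset Y_w$, a $(B_\psi,\cK_\chi)$-equivariant perverse sheaf corresponds to a character of $\pi_0(\Stab_{B_\psi}(y))$ whose restriction to the identity component must agree with the pullback of $\cK_\chi$ to $\Stab^\circ_{B_\psi}(y)\to S$; since the latter is nontrivial, no such sheaf exists. For $w\in\Om$, the open $B_\psi$-orbit condition together with \cite{Br86} implies $B_\psi$ has finitely many orbits on $Y_w$, and each carries at most finitely many simple equivariant perverse sheaves. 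Taking $\IC$-extensions over the finitely many $w\in\Om$ yields finitely many simples in $\cP(\psi,\chi)$, proving (1).

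For part (2), a simple $\cG\in \cP(S,\psi,\chi)$ is $\IC(Z,\cM)$ for some irreducible closed $Z\subset S_{\ov k}\times\Bun_{\ov k}$ and an irreducible local system $\cM$ on the smooth locus of $Z$. Since $V_\bP\times T_\psi$ acts only on the $\Bun$-factor and the analysis in (1) shows that on the $\Bun$-direction the support must lie in the closure of a $B_\psi$-orbit inside some $\Bun_{[w]}$ with $w\in\Om$, we conclude $Z=\ov{Z_S}\times\ov{Z_\Bun}$ for an irreducible $Z_S\subset S_{\ov k}$ and a $B_\psi$-orbit $Z_\Bun$. Applying the Künneth-type theorem for irreducible $\ell$-adic local systems on a product of geometrically connected varieties over $\ov k$, we decompose $\cM\cong \cM_S\boxtimes\cM_\Bun$ with the equivariance structure concentrated on $\cM_\Bun$; since $\IC$ commutes with external products, $\cG\cong \cF_S\boxtimes\cA$ with $\cF_S=\IC(\ov{Z_S},\cM_S)$ and $\cA=\IC(\ov{Z_\Bun},\cM_\Bun)\in\cP(\psi,\chi)$ simple.

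The main obstacle will be the sheafy vanishing on non-$\Om$ strata: converting Corollary \ref{c:Fw zero} into a statement about equivariant perverse sheaves requires a careful orbit-by-orbit analysis of equivariant derived categories under the solvable group $B_\psi$, and in particular a clean statement relating the restriction of $\cK_\chi$ to $\Stab^\circ_{B_\psi}(y)$ (as a connected group mapping to $T_\psi$ through an element of $\cS(Y_w)$) to the vanishing of the entire equivariant category. A secondary subtlety is verifying that the $\Bun_{[w]}$-stratification is sufficiently well-behaved (locally closed, $(V_\bP\times T_\psi)$-stable, with the geometric structure \eqref{QwL}) for the classification to go through at the level of the equivariant derived category.
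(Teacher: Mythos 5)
Your proposal follows essentially the same route as the paper's proof: the Birkhoff stratification of $\Bun_{\ov k}$, the reduction of $(V_\bP\times T_\psi, \AS_\psi\boxtimes\cK_\chi)$-equivariant sheaves on $\Bun_{[w]}$ to $(B_\psi,\cK_\chi)$-equivariant sheaves on $Y_w$, the vanishing on strata with $w\notin W_\bP\Om W_\bQ$ from nontriviality of $\cK_\chi$ on stabilizers, and the finiteness for $w\in\Om$ from the spherical-variety orbit count. Where you state the conclusion as ``taking $\IC$-extensions over $w\in\Om$,'' the paper records this more precisely as a t-exact \emph{equivalence} $j^*:\cD(\psi,\chi)\xrightarrow{\sim}\bigoplus_{w\in\Om}D_{(B_\psi,\cK_\chi)}(Y_w)$, which makes explicit that every simple object is a \emph{clean} extension from the open strata (a fact used later in Theorem \ref{th:eigen loc}(1)); this is worth stating rather than leaving at ``IC-extensions.''

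For part (2) you deviate slightly: where the paper restricts to $S'\times\{z\}$ for $z$ a point of the $B_\psi$-orbit $Z_\alpha$, identifying $(B_\psi,\cK_\chi)$-equivariant local systems on $S'\times Z_\alpha$ with $(\Gamma_\alpha,\cK_\chi|\Gamma_\alpha)$-equivariant local systems on $S'$ (with $\Gamma_\alpha$ acting trivially on $S'$), you invoke a K\"unneth theorem for irreducible local systems on a product over $\ov k$. Both routes work, but yours leaves two steps implicit that you should fill in: (a) that the support $Z$ is genuinely a product $\ov{Z_S}\times\ov{Z_\Bun}$ --- this requires observing that an irreducible $B_\psi$-stable locally closed substack of $S\times Y_w$ meeting the open orbit $Z_\alpha$ must be of the form $S'\times Z_\alpha$, using transitivity of $B_\psi$ on $Z_\alpha$ and triviality of the $B_\psi$-action on $S$; and (b) that the twisted equivariance structure on $\cM\cong\cM_S\boxtimes\cM_\Bun$ is itself of product form $\id_{\cM_S}\boxtimes(\cdot)$ --- this follows because $\cM_S$ is irreducible on a connected base, so $\End(\cM_S)=\Qlbar$, but it is not automatic from K\"unneth alone. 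The paper's restriction-to-a-point argument avoids both subtleties at once, which is why it is the cleaner choice here; if you keep K\"unneth, make (a) and (b) explicit.
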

\begin{proof}
All stacks in the proof are understood to be over $\ov k$;  we omit the base change  $(-)_{\ov k}$ from the notations.

(1) Stratify $\Bun$ into locally closed substacks $\Bun_{[w]}$ indexed by $[w]\in W_{\bP}\bs \tilW/W_{\bQ}$ using the Birkhoff decomposition \eqref{dcos} and \eqref{Birk}. By the discussion in \S\ref{sss:Yw}, the restriction of any $\cA\in \cD(\psi,\chi)$ to $\Bun_{[w]}$ can be identified with an object $\cA_{[w]}\in D_{(B_{\psi}, \cK_{\chi})}(Y_{w})$. By the genericity condition on the Kummer sheaf $\cK_{\chi}$ (see Definition \ref{d:qep data}), for any $w\in \tilW-W_{\bP}\Om W_{\bQ}$, any geometric point $y\in Y_{w}$, the restriction of $\cK_{\chi}$ to $\Stab_{B_{\psi}}(y)$ is nontrivial. Therefore $\cA_{[w]}=0$. This implies that any object $\cA\in \cD(\psi,\chi)$ has vanishing stalks and costalks outside the open strata $\sqcup_{w\in \Om}\Bun_{[w]}$ (one for each connected component of $\Bun$). For $w\in \Om$ let $j_{w} : Y_{w} \times V_{\bP} \hookrightarrow  L/Q_w \times V_{\bP} \cong \Bun_{[w]} \hookrightarrow \Bun$ be the embedding. Here the first arrow is a closed embedding and the second is the open inclusion of the open stratum. Then the sum of the functors $j^{*}_{w}$ gives a t-exact equivalence
\begin{equation*}
j^{*}: \cD(\psi,\chi)\isom \bigoplus_{w\in \Om}D_{(B_{\psi} \ltimes V_{\bP}, \cK_{\chi}\boxtimes\AS_{\psi})}(Y_{w} \times V_{\bP})\cong \bigoplus_{w\in \Om}D_{(B_{\psi},\cK_{\chi})}(Y_{w}).
\end{equation*}
By Remark \ref{r:sph}(2), $Y_{w}$ has finitely many $B_{\psi}$-orbits. This implies that $\Perv_{(B_{\psi},\cK_{\chi})}(Y_{w})$ has finitely many simple objects. Therefore the same is true for $\cP(\psi,\chi)$.

(2) The above argument shows that the restriction map along the embedding $\id_{S}\times j: S\times \	\sqcup_{w\in \Omega} Y_w\times V_{\bP}\incl S\times \Bun$ induces an equivalence
\begin{equation*}
\cD(S,\psi,\chi)\isom \bigoplus_{w\in \Om}D_{(B_{\psi}, \cK_{\chi})}(S\times Y_{w}).
\end{equation*}
Let $\cB\in \Perv_{(B_{\psi}, \cK_{\chi})}(S\times Y_{w})$ be a simple object, for some $w\in \Om$. Let $Y_{w}=\sqcup_{\a\in \Sigma}Z_{\a}$ be the stratification into $B_{\psi}$-orbits. Then $\cB$ is the middle extension of a local system $\cB_{0}$ on a locally closed  $B_{\psi}$-stable substack of $S\times Y_{w}$, which is necessarily of the form $S'\times Z_{\a}$ for some $\a\in \Sigma$ and $S'\subset S$ locally closed irreducible. Choose a point $z\in Z_{\a}$ and let $\G_{\a}$ be the stabilizer of $B_{\psi}$ at $z$. Then via restriction to $S'\times\{z\}$,  $(B_{\psi},\cK_{\chi})$-equivariant locally systems on $S'\times Z_{\a}$ are the same as $(\G_{\a}, \cK_{\chi}|\G_{\a})$-equivariant local systems on $S'$, with the trivial action of $\G_{\a}$ on $S'$. Let $\G_{\a}^{\circ}$ be the neutrual component of $\G_{\a}$. If $\cK_{\chi}|\G^{\circ}_{\a}$  is nontrivial, there are no such local systems. If $\cK_{\chi}|\G_{\a}^{\circ}$ is trivial, then $\cK_{\chi}|\G_{\a}$ descends to $\pi_{0}(\G_{\a})$ and gives a class $\xi\in \upH^{2}(\pi_{0}(\G_{\a}), \Qlbar^{\times})$ by \cite[A.4.1]{Yun14}. The class $\xi$ gives a central extension $1\to \Qlbar^{\times}\to \Xi\to \pi_{0}(\G_{\a})\to 1$. By \cite[A.4.4]{Yun14}, $(\G_{\a}, \cK_{\chi}|\G_{\a})$-equivariant local systems on $S'$ (with the trivial $\G_{\a}$-action) are the same as local systems on $S'$ with an action of the group $\Xi$ such that $\Qlbar^{\times}\subset \Xi$ acts by scaling on the local system. Thus any irreducible $(B_{\psi},\cK_{\chi})$-equivariant local system $\cB_{0}$ on $S'\times Z_{\a}$ must be of the form $\cF_{0}\boxtimes \rho$   for an irreducible local system $\cF_{0}$ on $S'$ and an irreducible representation $\rho$ of $\Xi$ on which $\Qlbar^{\times}$ acts by scaling. View $\rho$ as a $(B_{\psi},\cK_{\chi})$-equivariant local system $\cA_{0}$ on $Z_{\a}$. We have $\cB_{0}\cong \cF_{0}\boxtimes\cA_{0}$. Let $\cF_{S}$ and $\cA$ be the middle extensions of $\cF_{0}$ and $\cA_{0}$ respectively to $\ov S'$ and $\ov Z_{\a}$, then $\cB\cong\cF_{S}\boxtimes\cA$.
\end{proof}


\sss{Geometric Hecke operators} We briefly review the construction of geometric Hecke operators. For details we refer to \cite[\S4.2]{Yun14}. First consider the case $G$ is split. Let $\Hk$ be the Hecke correspondence
\begin{equation*}
\xymatrix{\Bun & \Hk \ar[r]^{\orr{h}}\ar[l]_{\oll{h}} & 
\Gm \times \Bun}
\end{equation*}
which classifies $(x,\cE,\cE',\a)$ where $x\in \Gm$, $\cE,\cE'\in \Bun$ and $\a$ is an isomorphism of $G$-torsors $\cE|_{X\bs \{x\}}\cong \cE'|_{X\bs \{x\}}$ respecting the level structures. The maps $\oll{h}$ and $\orr{h}$ send $(x,\cE,\cE',\a)$ to $\cE$ and $(x,\cE')$ respectively.

By the geometric Satake equivalence, for each $V\in \Rep(\dG)$ there is a $G\tl{t}\rtimes\Aut(k\tl{t})$-equivariant perverse sheaf on the affine Grassmannian $\Gr_{G}=G\lr{t}/G\tl{t}$ denoted $\IC_{V}$. This perverse sheaf can be ``spreadout'' over $\Hk$ (still denoted $\IC_{V}$) such that its $*$-restriction to every fiber of $\orr{h}$ is isomorphic to $\IC_{V}$. The geometric Hecke operator is the functor
\begin{equation*}
\bT_{V}=\orr{h}_{!}(\oll{h}^{*}(-)\ot \IC_{V}): \cD(\psi,\chi)\to \cD(\Gm, \psi,\chi).
\end{equation*}
The formation of $\bT_{V}$ is additive in $V$.

More generally, for any finite set $I$, any scheme $S$, $V\in \Rep(\dG^{I})$, there is a functor
\begin{equation*}
\bT^{I}_{S,V}: \cD(S, \psi,\chi)\to \cD(\Gm^{I}\times S, \psi,\chi)
\end{equation*}
defined using the version of the Hecke stack that modifies the bundle simultaneously at a collection of points indexed by $I$.
These functors have factorization structures: for $I=I_{1}\sqcup I_{2}$, $V_{i}\in \Rep(\dG^{I_{i}})$ and $V=V_{1}\boxtimes V_{2}$, there is a canonical isomorphism
\begin{equation}\label{factorize}
\bT^{I}_{S,V}\cong \bT^{I_{1}}_{\Gm^{I_{2}}\times S,V_{1}}\circ\bT^{I_{2}}_{S, V_{2}}.
\end{equation}
Moreover, for any surjection $I\surj J$ with the diagonal map $\dG^{J}\incl \dG^{I}$ and $\Gm^{J}\incl \Gm^{I}$, there is a canonical isomorphism
\begin{equation*}
\bT^{I}_{S,V}|_{\Gm^{J}\times S\times \Bun}\cong \bT^{J}_{S,V|_{\dG^{J}}}.
\end{equation*}

When $G$ is quasi-split, the only modification to the above discussions is that $\Gm$ should be replaced with the $\mu_{e}$-covering $\wt\Gm$ over which $G$ is split.

\begin{prop}\label{p:eigencat} Let $(\bP_{\infty}, \psi,\chi, \bQ_{0})$ be a euphotic automorphic datum. \begin{enumerate}
\item The functor $\bT_{V}[1]$ is exact for the perverse t-structures.
\item For any simple perverse sheaf $\cA\in \cP(\psi,\chi)$ and $V\in \Rep(\dG)$, $\bT_{V}(\cA)$ is isomorphic to a finite direct sum
\begin{equation*}
\bT_{V}(\cA)\cong \bigoplus_{\cA'}E(V)_{\cA,\cA'}\boxtimes \cA'.
\end{equation*}
Here $\cA'$ runs over simple objects in $\cP(\psi,\chi)$ and $E(V)_{\cA,\cA'}$ is a semisimple local system on $\GG_{m,\ov k}$.

\item For simple perverse sheaves $\cA,\cA'\in \cP(\psi,\chi)$ and $V\in \Rep(\dG)$ there is a canonical isomorphism
\begin{equation}\label{EV dual}
E(V)_{\cA,\cA'}\cong E(V^{\vee})^{\vee}_{\cA',\cA}.
\end{equation}

\item More generally, for any finite set $I$, $V\in \Rep(\dG^{I})$ and any simple perverse sheaf $\cA\in \cP(\psi,\chi)$, $\bT^{I}_{V}(\cA)$ takes the form
\begin{equation}\label{TIV}
\bT^{I}_{V}(\cA)\cong \bigoplus_{\cA'}E^{I}(V)_{\cA,\cA'}\boxtimes \cA'.
\end{equation}
Here $E^{I}(V)$ is a finite direct sum of local systems on $\Gm^{I}$ of the form $\boxtimes_{i\in I}E_{i}$, where $E_{i}$ are semisimple local systems on $\GG_{m,\ov k}$.

\item Note that $\mu_{e}$ acts on $\dG$ by pinned automorphisms and on $\wt\Gmg$ by deck transformations. For $\z\in \mu_{e}$,  $V\in \Rep(\dG^{I})$, let $V^{\z}$ be the representation given by the composition $\dG^{I}\xr{\z^{I}}\dG^{I}\to \GL(V)$; let $\z^{I}$ also denote the diagonal action of $\z$ on $\wt\Gmg^{I}$. Then there is an isomorphism functorial in $\cA,\cA'$ and $V$, and compatible with the group structure of $\mu_{e}$:
\begin{equation*}
(\z^{I})^{*}E^{I}(V)_{\cA,\cA'}\cong E^{I}(V^{\z})_{\cA,\cA'}, \quad \forall \z\in \mu_{e}.
\end{equation*}
\end{enumerate}
\end{prop}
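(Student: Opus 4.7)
The plan is to reduce everything to three ingredients: Lemma~\ref{l:simple obj}(2), which classifies simple objects in $\cP(S,\psi,\chi)$ as external products $\cF_{S}\boxtimes\cA$; purity together with the decomposition theorem; and the factorization structure \eqref{factorize}. I treat the split case explicitly; the quasi-split case is identical after replacing $\Gmg$ by $\wt\Gm_{\ov k}$ throughout. Part~(1) is standard geometric Satake bookkeeping: $\oll{h}$ is smooth of relative dimension $\dim\Supp(\IC_{V})$, $\IC_{V}$ spreads out to a perverse sheaf on $\Hk$, and $\orr{h}$ is proper when restricted to $\Supp(\IC_{V})$; the relative dimension calculation then yields t-exactness of $\bT_{V}[1]$ (cf.\ \cite[\S4.2]{Yun14}). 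The level structures at $0$ and $\infty$ do not intervene since modifications occur only over $\Gm$.

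For part~(2), fix a simple $\cA\in\cP(\psi,\chi)$. By Lemma~\ref{l:simple obj}(1) there are only finitely many simples, so after a Frobenius twist we may assume $\cA$ is pure. By (1), $\bT_{V}(\cA)[1]$ is pure and perverse on $\Gmg\times\Bun$, hence semisimple by the decomposition theorem. Lemma~\ref{l:simple obj}(2) applied to $S=\Gm$ says that every simple summand is of the form $\cF\boxtimes\cA'$ with $\cF$ a simple perverse sheaf on $\Gmg$ and $\cA'\in\cP(\psi,\chi)$ simple. Simple perverse sheaves on the smooth curve $\Gmg$ are either shifts $[1]$ of irreducible local systems middle-extended from an open dense subvariety, or skyscrapers. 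The skyscraper case is excluded by local triviality of the Hecke stack over $\Gm$ --- $\Hk|_{\Gm}$ is \'etale-locally isomorphic to $\Gm\times(\text{local Hecke})\times\Bun$ by the factorization property, so the stalk dimensions of $\bT_{V}(\cA)$ are locally constant along $\Gmg$. Regrouping by $\cA'$ gives the decomposition with semisimple local systems $E(V)_{\cA,\cA'}$.

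For part~(3), I use the swap involution of the Hecke correspondence $(x,\cE,\cE',\a)\mapsto(x,\cE',\cE,\a^{-1})$, which interchanges $\oll{h}\leftrightarrow\orr{h}$ and sends $\IC_{V}$ to $\IC_{V^{\vee}}$ (up to a Tate twist), combined with Verdier duality $\DD$ to obtain $\DD\circ\bT_{V}\cong\bT_{V^{\vee}}\circ\DD$. Pure simple perverse sheaves on $\Bun$ are Verdier self-dual up to Frobenius twist, so dualizing the decomposition from (2) and matching against the direct decomposition of $\bT_{V^{\vee}}(\cA')$ forces \eqref{EV dual}. Part~(4) follows by induction on $|I|$: reduce to $V=V'\boxtimes V_{i_{0}}$ by linearity and semisimplicity of $\Rep(\dG^{I})$; use \eqref{factorize} to rewrite $\bT^{I}_{V}(\cA)\cong\bT^{I'}_{\Gm,V'}(\bT_{V_{i_{0}}}(\cA))$; expand via (2) and apply the inductive hypothesis to each $\cA''$-summand to obtain \eqref{TIV}. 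Part~(5) is then immediate from the $\mu_{e}$-equivariant structure built into the Hecke stack for quasi-split $G$: the deck action of $\mu_{e}$ on $\wt\Gm$ combined with the pinned action on the Satake category intertwines $\IC_{V}$ with $\IC_{V^{\z}}$, and extracting the $\cA'$-isotypic component yields the functorial isomorphism, the cocycle condition being inherited from the $\mu_{e}$-action on $\Hk$.

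The main obstacle is the exclusion of skyscraper summands in part~(2); this rests on local triviality of $\Hk$ over $\Gm$, a standard consequence of the factorization property of Beilinson--Drinfeld Grassmannians but worth stating carefully. Once (1) and (2) are in place, (3)--(5) reduce to dualizing, iterating, and tracking symmetries.
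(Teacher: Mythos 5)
Your parts (2)--(5) track the paper's argument reasonably closely (purity and the decomposition theorem for (2), the swap involution plus Verdier duality for (3), factorization for (4), $\Out$-equivariance of Satake for (5)). But part (1) contains a genuine gap, and since everything else is built on it, the gap matters.

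The t-exactness of $\bT_{V}[1]$ is \emph{not} ``standard geometric Satake bookkeeping'' and does not follow from smoothness of $\oll{h}$, ind-properness of $\orr{h}|_{\Supp\IC_{V}}$, and a relative-dimension count. Properness alone yields only two-sided bounds on perverse cohomological amplitude, not exactness; indeed, generic Hecke operators on a plain $\Bun_{G}$ are \emph{not} perverse t-exact. The mechanism the paper actually uses (and which is also what \cite{Yun14} relies on) has two nontrivial ingredients you have skipped. First, one shows that $\orr{h}$ restricted to ${}_{[1]}\Hk$ --- the preimage of the open Birkhoff stratum $\Bun_{[1]}$ --- is ind-\emph{affine}, because $\Bun_{[1]}\subset\Bun^{0}$ is the non-vanishing locus of a section of a determinant line bundle whose pullback to the Grassmannian fibers is ample. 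Combined with the fact (Lemma~\ref{l:simple obj}) that every $\cA\in\cP(\psi,\chi)$ is a \emph{clean} extension from the open strata, ind-affineness gives right t-exactness of $\orr{h}_{!}$ and ind-properness gives left t-exactness of $\orr{h}_{*}=\orr{h}_{!}$, hence t-exactness on $\cP_{0}(\psi,\chi)$. Second, this argument can fail on the other connected components of $\Bun$ (the complement of $\Bun_{[\om]}$ inside $\Bun^{\om}$ may have codimension $>1$, so it is not cut out by a single line-bundle section); the paper therefore runs a separate bootstrap: fixing a geometric point $x\in\Gm$ and picking $V\in\Rep(\dG)$ whose central character corresponds to $-\om$ under $\xch(Z\dG)\cong\Om$, any simple $\cB$ on component $\om$ is exhibited as a direct summand of $V^{\vee}\star_{x}\cA$ for some simple $\cA\in\cP_{0}(\psi,\chi)$, and t-exactness on $\cP_{\om}$ is then deduced from t-exactness on $\cP_{0}$. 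Your proposal mentions neither the affineness argument nor the component bootstrap, and without them part (1) is unproven.

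A smaller remark on part (2): the exclusion of skyscraper summands and local constancy along $\Gm$ is argued in the paper via the ULA property of $\oll{h}^{*}\cA$ and $\IC_{V}$ relative to the projection to $\Gm$ (cf.\ \cite[Lemma 4.4.6]{Yun14}); your appeal to ``local triviality of $\Hk$ over $\Gm$'' points at the same phenomenon but should be phrased in terms of ULA rather than étale-local triviality, which is not quite what holds for $\Hk$ (one has a unipotent near-product structure, not an honest product).
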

\begin{proof} (1)(2) We first show a weak version of (1): suppose $\cA$ is supported on the neutral component, i.e., it is a clean extension from $\Bun_{[1]}$, then $\bT_{V}(\cA)[1]$ is perverse for any $V\in \Rep(\dG)$. The proof is similar to the argument in \cite[third paragraph in the proof of Theorem 3.8]{Yun16}, and we only give a sketch. The key point being that the map $\orr{h}:{}_{[1]}\Hk\to \Gm\times\Bun$ is ind-affine, where ${}_{[1]}\Hk\subset\Hk$ is the preimage of $\Bun_{[1]}$ under $\oll{h}$. This boils down to the fact that $\Bun_{[1]}\subset \Bun^{0}$ (the neutral component of $\Bun$) is the non-vanishing locus of a section of a certain determinant line bundle, which pulls back to an ample line bundle on the affine Grassmannian. Therefore ${}_{[1]}\Hk\subset\Hk$ is the nonvanishing locus of a section of a line bundle relative ample with respect to $\orr{h}$, hence the ind-affineness of $\orr{h}|{}_{[1]}\Hk$.

Then we prove a weak version of (2): $\bT_{V}(\cA)\cong \op_{\cA'}E(V)_{\cA,\cA'}\boxtimes \cA'$ for some $E(V)_{\cA,\cA'}$ that is a direct sum of {\em shifted} semisimple local systems on $\GG_{m,\ov k}$. By the decomposition theorem ($\oll{h}$ is locally a fibration and $\orr{h}$ is ind-proper), $\bT_{V}(\cA)$ is a semisimple complex on $\Gm\times\Bun$.  By Lemma \ref{l:simple obj}, we conclude that $\bT_{V}(\cA)$ has the required form for semisimple complexes $E(V)_{\cA,\cA'}\in D(\GG_{m,\ov k})$. The fact that $\bT_{V}(\cA)$ is locally constant along $\Gm$ follows from the ULA property of both $\oll{h}^{*}\cA$ and $\IC_{V}$ with respect to the projection to $\Gm$. For details we refer to the argument in\cite[Lemma 4.4.6]{Yun14}.

Now we prove (1) in general. Note that the same argument for the weaker version may fail for $\cA$ supported on other components of $\Bun$: it may happen for some $\om\in \Om$ that the boundary of $\Bun_{[\om]}$ has codimension $>1$, so it cannot be the non-vanishing locus of a section of a line bundle.

For $\om\in \Om$, let $\Bun^{\om}$ be the corresponding component of $\Bun$ (so that $\Bun_{[\om]}\subset \Bun^{\om}$ is open), and $\cD_{\om}(\psi,\chi)$ be the subcategory of $\cD(\psi,\chi)$ supported on $\Bun^{\om}$. Similarly define $\cP_{\om}(\psi,\chi)$. Let $\cB\in \cP_{\om}(\psi,\chi)$. Since $\bT_{V}(\cB)$ is locally constant along $\Gm$, it suffices to fix any geometric point $x\in \Gm$, and check that $\bT_{V}(\cB)|_{\{x\}\times \Bun}$ is perverse. Let us denote the geometric Hecke operator at $x$ by $V\star_{x}(-)=\bT_{V}(-)|_{\{x\}\times \Bun}: \cD(\psi,\chi)\to \cD(\psi,\chi)$.  The weak version of (1) that is already proven says that:
\begin{equation}\label{w1}
\mbox{For any $V\in \Rep(\dG)$, $V\star_{x}(-)$ sends $\cP_{0}(\psi,\chi)$ to $\cP(\psi,\chi)$.}
\end{equation}
 Choose $V\in \Rep(\dG)$ whose central character corresponds to $-\om$ under the canonical isomorphism $\xch(Z\dG)\cong \Om$. Then $\cK=V\star_{x}\cB\in \cD_{0}(\psi,\chi)$. Now $\cB$ is a direct summand of $(V^{\vee}\ot V)\star_{x}\cB=V^{\vee}\star_{x}(\cK)$.  By \eqref{w1}, $\cB$ is a direct summand of $V^{\vee}\star_{x}\pH^{0}\cK$, therefore a direct summand of $V^{\vee}\star_{x}\cA$ for some simple object $\cA\in \cP_{0}(\psi,\chi)$. Now for any $V'\in\Rep(\dG)$, $V'\star_{x}\cB$ is a direct summand of $V'\star_{x}(V^{\vee}\star_{x}\cA)\cong (V'\ot V^{\vee})\star_{x}\cA$, which is perverse by \eqref{w1}.

Finally (1) together with the weak version of (2) implies the full version of (2).

(3) Let $\s$ be the involution on $\Hk$ that switches the two $G$-bundles. Then there is a well-known isomorphism between $\IC_{V}$ and $\s^{*}\DD_{\orr{h}}(\IC_{V^{\vee}})$ (both on $\Hk$, where $\DD_{\orr{h}}$ denotes the relative Verdier duality with respect to the map $\orr{h}$). This follows from the similar statement for the Satake category. From this and standard sheaf-theoretic functor manipulations we obtain an adjunction
\begin{equation*}
\uHom(\bT_{V^{\vee}}(\cA'), \Qlbar\bt \cA)\cong\uHom(\Qlbar\bt\cA', \bT_{V}(\cA))
\end{equation*}
as complexes on $\Gm\times\Bun$, functorial in $\cA,\cA'\in \cD(\psi,\chi)$. In view of the decomposition in (2), we get \eqref{EV dual}.

(4) Same argument as above shows that $\bT^{I}_{V}(\cA)$ is a semisimple perverse sheaf on $\Gm^{I}\times \Bun$ locally constant along $\Gm^{I}$. By Lemma \ref{l:simple obj},  we can write $\bT^{I}_{V}(\cA)$ in the form \eqref{TIV}, with $E^{I}(V)_{\cA,\cA'}$ a local system on $\Gm^{I}$ upon shifting by $|I|$. Now the factorization structure of $\bT^{I}_{V}$ allows us to conclude that each $E^{I}(V)_{\cA,\cA'}$ is indeed an external tensor product. For example, take $I=\{1,2\}$, and $V=V_{1}\boxtimes V_{2}\in \Rep(\dG^{I})$, then $\bT_{V_{2}}(\cA)\cong \op E(V_{2})_{\cA,\cA'}\boxtimes\cA'$. Acting on both sides by $\bT_{\Gm,V_{1}}$ again, the left side becomes $\bT^{I}_{V}(\cA)$ by the factorization isomorphism \eqref{factorize}, and the right side becomes
\begin{equation*}
\bigoplus_{\cA',\cA''} E(V_{1})_{\cA',\cA''}\boxtimes E(V_{2})_{\cA,\cA'}\boxtimes \cA''
\end{equation*}
with both $\cA'$ and $\cA''$ run through simple objects in $\cP(\psi,\chi)$.  We conclude
\begin{equation}\label{EV12}
E^{\{1,2\}}(V_{1}\boxtimes V_{2})_{\cA,\cA''}\cong \bigoplus_{\cA'}E(V_{1})_{\cA',\cA''}\boxtimes E(V_{2})_{\cA,\cA'}.
\end{equation}
The case of general $I$ follows by an iteration of the same argument.

(5) follows from the $\Out(\GG)=\Out(\dG)$-equivariance of the geometric Satake equivalence. See \cite[\S4.2.3]{Yun14}, and \cite[Appendix B]{HNY}.
\end{proof}

\subsection{Eigen local systems}
Next we will extract $\LG$-local systems from the category $\cP(\psi,\chi)$.

\sss{} Let $\cP^{ss}(\psi,\chi)\subset\cP(\psi,\chi)$ be the subcategory of semisimple objects.   Let $\Loc(\wt\Gmg)$ be the tensor category of $\Qlbar$-local systems (of finite rank) over $\wt\Gmg$. Let $\cC$ be the full subcategory of $\Loc(\wt\Gmg)$ consisting of finite direct sums of simple factors of $E(V)_{\cA,\cA'}$ when $V$ runs over $\Rep(\dG)$ and $\cA,\cA'$ simple objects in $\cP(\psi,\chi)$.  Since $E(V_{1}\ot V_{2})_{\cA,\cA''}\cong \bigoplus_{\cA'}E(V_{1})_{\cA',\cA''}\ot E(V_{2})_{\cA,\cA'}$ (restricting \eqref{EV12} to the diagonal), we see that $\cC$ is stable under tensor product. Proposition \ref{p:eigencat}(3) shows that $\cC$ is closed under duality. Therefore $\cC$ is a semisimple rigid tensor subcategory of $\Loc(\wt\Gmg)$, hence neutral Tannakian. 

By Proposition \ref{p:eigencat}, $\cP=\cP^{ss}(\psi,\chi)$ is a factorizable $\cR=\Rep(\dG)$-module category with coefficients in $\cC$, in the sense of \S\ref{as:define fact mod}. Applying Corollary \ref{c:fact mod} (and Remark \ref{r:fact mod equiv} for nonsplit $G$), we get the following result.

\begin{theorem}\label{th:eigen loc} Let $(\bP_{\infty}, \psi,\chi, \bQ_{0})$ be a euphotic automorphic datum. Then there are finitely many semisimple $\mu_{e}$-equivariant $\dG$-local systems $\{E_{\s}\}_{\s\in \Sigma}$ over $\wt\Gmg$ (for some index set $\Sigma$), and a decomposition
\begin{equation*}
\cP^{ss}(\psi,\chi)=\bigoplus_{\s\in \Sigma} \cP_{\s}
\end{equation*}
such that
\begin{enumerate}
\item Let $\dG_{\s}=\Aut_{\dG}(E_{\s})$  (a reductive group over $\Qlbar$ which can be identified with subgroup of $\dG$ up to conjugacy; it is equipped with a $\mu_{e}$-action). Each $\cP_{\s}$ is an $E_{2}$-module category under $\Rep(\dG_{\s})$ (the action of $W\in \Rep(\dG_{\s})$ on $\cA\in \cP_{\s}$ by $W\bu\cA$) with functorial isomorphisms $W\bu\cA\cong W^{\z}\bu\cA$ for $\z\in \mu_{e}$.

\item For any $\cA\in \cP_{\s}$ and $V\in \Rep(\dG)$, there is an isomorphism functorial in $V$ and $\cA$
\begin{equation*}
\bT_{V}(\cA)\cong \bigoplus_{E}E\boxtimes ([E_{\s}(V):E]\bu \cA)\in \cD(\Gm, \psi,\chi)
\end{equation*}
Here the direct sum is over all irreducible local systems $E$ over $\wt\Gmg$, $E_{\s}(V)\in\Loc(\wt\Gmg)$ is the (semisimple) local system on $\wt\Gmg$ associated to $E_{\s}$ and $V$, and $[E_{\s}(V):E]$ is the multiplicity space of $E$ in $E_{\s}(V)$, viewed as an object in $\Rep(\dG_{\s})$. 

Moreover, there is a version of the above isomorphism for any finite set $I$ and $V\in \Rep(\dG^{I})$, and these isomorphisms are compatible with the factorization structures.

\end{enumerate}
\end{theorem}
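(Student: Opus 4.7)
The plan is to reduce the theorem to the abstract classification of factorizable module categories proven in Appendix A (Theorem \ref{th:fact mod}), applied to the triple $(\cR,\cC,\cP)$ where $\cR=\Rep(\dG)$, $\cC$ is the Tannakian subcategory of $\Loc(\wt\Gmg)$ generated by the local systems $E(V)_{\cA,\cA'}$, and $\cP=\cP^{ss}(\psi,\chi)$. Most of the ingredients are already assembled in Proposition \ref{p:eigencat}; the remaining task is to verify that the geometric Hecke action equips $\cP$ with the structure of a factorizable $\cR$-module category with coefficients in $\cC$ in the precise sense required by the appendix.

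First I would collect the input data. The finiteness of simple objects in $\cP$ is Lemma \ref{l:simple obj}(1); the decomposition $\bT_V(\cA)\cong \bigoplus_{\cA'}E(V)_{\cA,\cA'}\boxtimes\cA'$ with semisimple local system coefficients is Proposition \ref{p:eigencat}(2); the duality $E(V)_{\cA,\cA'}\cong E(V^\vee)^\vee_{\cA',\cA}$ is Proposition \ref{p:eigencat}(3); and the external tensor-product structure of $E^I(V)_{\cA,\cA'}$ for $V\in\Rep(\dG^I)$, together with the compatibility \eqref{EV12}, is Proposition \ref{p:eigencat}(4). Combining these, $\cC$ is closed under $\otimes$ and duals inside $\Loc(\wt\Gmg)$, hence is a neutral Tannakian subcategory (semisimplicity is automatic since direct summands of semisimple local systems are semisimple).

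Next I would assemble the factorization data. For each finite set $I$ and each $V\in\Rep(\dG^I)$, the assignment $\cA\mapsto\bT^I_V(\cA)$ together with the isomorphism \eqref{TIV} defines, upon restriction to the diagonal and passage to stalks, an action of $\cR$ on $\cP$ with coefficients in $\cC$. The factorization isomorphisms \eqref{factorize}, their compatibility under surjections $I\twoheadrightarrow J$, and the $\mu_e$-equivariance of the geometric Satake equivalence (Proposition \ref{p:eigencat}(5)) together furnish the factorization/commutativity constraints and the $\mu_e$-equivariant enhancement needed to match the setup of \S\ref{as:define fact mod}. Checking that the higher coherences (associators, symmetry, unit) obtained from the factorizable structure on the affine Grassmannian verify the axioms in the appendix is the main technical obligation; this is routine but must be done uniformly across all $I$.

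With this in place, I would invoke Corollary \ref{c:fact mod} (in the quasi-split case via Remark \ref{r:fact mod equiv}) to obtain the decomposition $\cP=\bigoplus_{\s\in\Sigma}\cP_\s$, a semisimple $\mu_e$-equivariant $\dG$-local system $E_\s$ on $\wt\Gmg$ for each block, and an $E_2$-action of $\Rep(\dG_\s)$ with $\dG_\s=\Aut_{\dG}(E_\s)$. The translation of the abstract output into the concrete statement of (2) is a formal unwinding: the local system $E(V)_{\cA,\cA'}$ inside $\bT_V(\cA)$ becomes the $\cA'$-isotypic component of $E_\s(V)\bu\cA$, and the factorization version for $V\in\Rep(\dG^I)$ follows from the factorizable enhancement. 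The main obstacle I anticipate is not any single step here but rather ensuring that the $E_2$ and $\mu_e$-equivariance structures produced by geometric Satake match the hypotheses of Theorem \ref{th:fact mod} on the nose; this is a bookkeeping exercise about the compatibility of the commutativity constraint on $\cR$ arising from fusion with the symmetric structure used in the appendix, and is precisely what is treated in the self-contained categorical analysis cited from Appendix A.
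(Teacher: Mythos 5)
Your proposal matches the paper's proof: the authors define $\cC$ as the full tensor subcategory of $\Loc(\wt\Gmg)$ generated by the $E(V)_{\cA,\cA'}$ (closed under $\otimes$ by \eqref{EV12} and under duality by Proposition~\ref{p:eigencat}(3), hence neutral Tannakian), observe via Proposition~\ref{p:eigencat} that $\cP^{ss}(\psi,\chi)$ is a factorizable $\Rep(\dG)$-module category with coefficients in $\cC$ in the sense of \S\ref{as:define fact mod}, and then apply Corollary~\ref{c:fact mod} together with Remark~\ref{r:fact mod equiv} for the $\mu_e$-equivariance. This is precisely your route.
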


We recall that a $\dG$-local system on $\wt\Gmg$ is called semisimple if the Zariski closure of the image of $\pi_{1}(\wt\Gmg)$ in $\dG$ is reductive. 

\sss{Descent to $k$}\label{sss:des}
After a finite extension of $k$, we may assume that each simple object $\cA$ in $\cP(\psi,\chi)$ has the property $\Fr^{*}\cA\cong \cA$, where $\Fr:\Bun\to \Bun$ is the Frobenius morphism with respect to $k$. In particular, each summand $\cP_{\s}$ in Theorem \ref{th:eigen loc} is stable under $\Fr^{*}$. Fix a Weil structure $\Fr^{*}\cA\cong \cA$ for each simple $\cA\in \cP(\psi,\chi)$. Applying Remark \ref{r:fact mod equiv} to the $\G=\Fr^{\ZZ}$-equivariant structure on the factorizable $\Rep(\dG)$-module structure on $\cP^{ss}(\psi,\chi)$ with coefficients in $\cC$ (where $\Fr$ acts trivially on $\Rep(\dG)$ and on $\cP^{ss}(\psi,\chi)$ by the fixed Weil structures, and it acts by $\Fr^{*}$ on $\cC$), we conclude that each $E_{\s}$ viewed as a tensor functor $\Rep(\dG)\to \cC$ carries a $\Fr$-equivariant structure. In other words,  $E_{\s}$ carries a Weil structure (depending on the choice of Weil structures of simple objects in $\cP(\psi,\chi)$).

\begin{remark}
\begin{enumerate}
\item A $\mu_{e}$-equivariant $\dG$-local system on $\wt\Gmg$ is the same thing as a $\LG=\dG\rtimes\mu_{e}$-local system on $\Gmg$, such that the induced $\mu_{e}$-cover of $\Gmg$ is $\wt\Gmg$. Therefore, after choosing a base point $x\in \Gmg$, the $E_{\s}$ in the above theorem is the same data as continuous homomorphisms $\r_{\s}: \pi_{1}(\Gmg,x)\to \LG(\Qlbar)$. The discussion in  \S\ref{sss:des} gives an extension of $\r_{\s}$ to the Weil group of $\Gm$ so it is a Langlands parameter in the usual sense for the quasi-split group $G$ over $F=k(t)$.

\item The upshot of the above theorem is that, we not only can extract a $\LG$-local system $E_{\s}$ from each indecomposable summand $\cP_{\s}$ of the category $\cP^{ss}(\psi,\chi)$, but there is a {\em residual action} of $\Rep(\Aut(E_{\s}))$ on $\cP_{\s}$. The $\Rep(\Aut(E_{\s}))$-module category $\cP_{\s}$ may be viewed as a secondary invariant attached to the automorphic datum in question that is not covered by the usual Langlands parameter $E_{\s}$. The relationship between this secondary invariant and global $L$-packets deserves further study. A closely related phenomenon is discussed in \cite{FW} under the name {\em fractional Hecke eigensheaves}.

\item The number of simple objects in $\cP(\psi,\chi)$ can be large in some examples. We will see in an example for $G=\Sp_{2n}$ (see \S\ref{orbitC}, case  (1)) that $\cP(\psi,\chi)$ has $2^{n}$ simple objects all supported on the open $B_{\psi}$-orbit of $L/Q$. It remains unclear how $\cP^{ss}(\psi,\chi)$ decomposes into indecomposables in this case.  

\item The above theorem (or rather Corollary \ref{c:fact mod}) improves \cite[Theorem 4.4.2]{Yun14}. In {\em loc.cit}, we consider the situation where the relevant category of perverse sheaves $\cP$ has a unique simple object on each connected component of $\Bun$ (indexed by $\Om$, and $\Om\cong \xch(Z\dG)$ when $G$ is split).  In this case, $\cP$ is necessarily indecomposable because the Hecke operators will mix up the components transitively. Corollary \ref{c:fact mod} then gives a $\dG$-local system $E$ (rather than a weak $\dG$-local system in the sense of \cite[Def. 4.3.2]{Yun14}), together with an $E_{2}$-action of $\Rep(\Aut(E))$ on $\cP$. Since $Z\dG\subset \Aut(E)$, $\Rep(\Aut(E))$ is graded by $\xch(Z\dG)$, and this grading is compatible with the decomposition of $\cP$ according the connected components of $\Bun$.

\item In practice, to calculate these local systems, we need to calculate the local systems $E(V)_{\cA,\cA'}$ that appear in Proposition  \ref{p:eigencat}. These can be calculated in the same way as described in \cite{Yun14}, as part of the derived direct image of a family of varieties over $\Gm$ closely related to the Beilinson-Drinfeld affine Grassmannian.
\end{enumerate}
\end{remark}

\begin{cor} \label{cor:strict eigensheaf} Assume that $G$ is split and simply-connected. Let $(\bP_\infty, \psi, \chi, \bQ_0)$ be a strict euphotic automorphic datum. Denote by $ZG^*$ the set of characters $\sigma:ZG(k)\rightarrow \Qlbar^\times$. Then there is a decomposition 
\[\calP(\psi,\chi)=\bigoplus_{\sigma\in ZG^*} \cP_\sigma\]
such that each $\cP_\sigma$ contains a unique simple perverse sheaf $\cA_\sigma$ which is a Hecke eigensheaf with semisimple eigen $\dG$-local system $E_\sigma$. 
\end{cor}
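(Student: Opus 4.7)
The plan is to combine Lemma \ref{l:simple obj} with the strict hypothesis in order to pin down the simple objects of $\cP(\psi,\chi)$, and then apply Theorem \ref{th:eigen loc} to upgrade them to genuine Hecke eigensheaves.

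\emph{Step 1: locating the simples.} Since $G$ is split and simply-connected, the group $\Om = \tilW/W_{\aff}$ is trivial, so the Birkhoff stratification of $\Bun$ has a single top stratum $\Bun_{[1]}$, and the restriction argument in Lemma \ref{l:simple obj} identifies $\cD(\psi,\chi)$ with the derived category of $(B_{\psi},\cK_{\chi})$-equivariant complexes on $Y_{1}=L/Q$. Strictness condition (1), together with the genericity argument from Lemma \ref{l:simple obj}(1), forces every simple object of $\cP(\psi,\chi)$ to have vanishing stalks and costalks outside the open $B_{\psi}$-orbit on $L/Q$. Strictness condition (2) identifies that open orbit with $B_{\psi}/ZG$. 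Thus the simples of $\cP(\psi,\chi)$ are exactly the clean extensions of rank-one $(B_{\psi},\cK_{\chi})$-equivariant local systems on $B_{\psi}/ZG$.

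\emph{Step 2: the decomposition by central characters.} The center $ZG$ lies in the automorphism group of every $G$-bundle on $\PP^{1}$, hence acts tautologically on every object of $\cP(\psi,\chi)$; decomposing by characters of $ZG(k)$ yields a splitting $\cP(\psi,\chi)=\bigoplus_{\s\in ZG^{*}}\cP_{\s}$. The rank-one $(B_{\psi},\cK_{\chi})$-equivariant local systems on $B_{\psi}/ZG$ are classified by characters of the stabilizer $ZG$ that are compatible with $\cK_{\chi}|_{ZG}$, and this classification matches the $ZG^{*}$-indexing of the inertia decomposition, so each $\cP_{\s}$ contains a unique simple $\cA_{\s}$.

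\emph{Step 3: Hecke eigensheaves.} Apply Theorem \ref{th:eigen loc} to $\cP^{ss}(\psi,\chi)$: it furnishes a decomposition into Hecke-stable blocks indexed by some $\Sigma$, each an $E_{2}$-module category under $\Rep(\dG_{\tau})$ with a semisimple eigen $\dG$-local system $E_{\tau}$. Because the geometric Hecke operators commute with the inertia $ZG$-action, each $\cP_{\s}$ from step 2 is Hecke-stable, and having a unique simple it must coincide with a single block from Theorem \ref{th:eigen loc}. In a block with a unique simple $\cA_{\s}$, semisimplicity of the $\Rep(\dG_{\s})$-action forces $W\bu\cA_{\s}$ to be a sum of copies of $\cA_{\s}$ for every $W\in\Rep(\dG_{\s})$, so the Hecke formula in Theorem \ref{th:eigen loc}(2) collapses to $\bT_{V}(\cA_{\s})\cong E_{\s}(V)\boxtimes \cA_{\s}$, identifying $\cA_{\s}$ as a Hecke eigensheaf with semisimple eigen $\dG$-local system $E_{\s}$.

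The main subtlety I expect lies in step 2, namely confirming that the $ZG$-inertia decomposition and the classification of $(B_{\psi},\cK_{\chi})$-equivariant local systems on $B_{\psi}/ZG$ really match up with indexing by the full character group $ZG^{*}$. This should reduce to verifying that $\cK_{\chi}$ restricts compatibly to $ZG\subset T_{\psi}$, forced by centrality, so that the rank-one equivariant local systems form a torsor under $ZG^{*}$ that is canonically identified with $ZG^{*}$ via the inertia structure of $\Bun$.
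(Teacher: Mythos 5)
Your proof is correct, but it takes a genuinely different route from the paper's. The paper avoids Theorem~\ref{th:eigen loc} entirely: after the reduction $\cD(\psi,\chi)\cong D_{(B_\psi,\cK_\chi)}(O)$ and the identification $\cP(\psi,\chi)\cong\Rep(ZG,\Qlbar)$, it writes down an \emph{explicit} $ZG$-isotypic decomposition formula for any $\cH\in\cD(S,\psi,\chi)$ in terms of its restriction to a point $y\in O$ and the clean extensions $j_!\cF_\sigma$, then plugs in $\cH=\bT_V(\cA_\sigma)$ using only Proposition~\ref{p:eigencat} (perversity and local constancy of $\bT_V(\cA_\sigma)$) together with the fact, cited from \cite[\S 4.4.1]{Yun14}, that $\bT_V$ preserves the inertia $ZG$-grading. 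You instead invoke the full Tannakian machinery of Theorem~\ref{th:eigen loc} and argue that each $\cP_\sigma$, being Hecke-stable with a unique simple, must be a single block, and that the $E_2$-module structure of $\Rep(\dG_\sigma)$ on a one-object category collapses to the forgetful one, so the Hecke formula degenerates to the eigensheaf form. Both routes are valid. Yours buys a slicker conclusion (semisimplicity of $E_\sigma$ is automatic from Theorem~\ref{th:eigen loc}, and the eigensheaf property drops out structurally) at the cost of relying on a heavier theorem; the paper's is more elementary and makes the geometry visible. Two small points you should make explicit: (i) the $ZG$-stability of the geometric Hecke operators, which you assert without argument and which the paper sources to \cite{Yun14}; (ii) in Step 3, the claim that $W\bu\cA_\sigma\cong\cA_\sigma^{\oplus\dim W}$ uses uniqueness of fiber functors for $\Rep(\dG_\sigma)$ — worth a sentence. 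The subtlety you flag in Step 2, matching the stabilizer-$ZG$ description of simples with the inertia-$ZG^*$ decomposition, is genuine, and the paper finesses it by never separating the two: the identification $\cP\cong\Rep(ZG)$ via $y^*$ and the inertia decomposition are the same construction, so no compatibility needs checking.
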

\begin{proof} 
Denote by $O$ the open $B_\psi$-orbit of $L/Q$. Using strictness the proof of Lemma \ref{l:simple obj} implies an equivalence of categories
\[\cD(\psi,\chi) \xrightarrow{\cong} D_{(B_\psi,\cK_\chi)}(L/Q) \xrightarrow{\cong} D_{(B_\psi,\cK_\chi)}(O). \]
Fixing a point $y\in O(\ov{k})$ identifies $\cP(\psi,\chi)$ with $\Rep(ZG,\Qlbar)$ and hence we obtain the desired decomposition and uniqueness of the simple perverse sheaf $\calA_\sigma$. More explicitly $\calA_\sigma=j_!\cF_\sigma$ is the perverse sheaf whose restriction to $O$ corresponds to the character $\sigma$.  For any $\cH\in \cD(S,\psi,\chi)$ we may decompose its restriction to $S\times y$ as follows 
\[(\id_S\times y)^*\cH \cong \bigoplus_{\sigma \in ZG^*} (\id_S\times y)^*\cH_{\sigma}, \]
according to the action of $ZG$. For $\cH\in \cP(S,\psi,\chi)$ (or any shifted perverse sheaf) we obtain a decomposition 
\[\cH\cong \bigoplus_{\sigma\in ZG^*} (\sigma^{-1}\otimes(\id_S\times y)^*\cH_\sigma)^{ZG}\boxtimes  j_!\cF_\sigma,\]
cf. \cite{Yun14}. Note that the automorphism group of any point of $\Bun_G(\bQ_0,B_\psi\bP_\infty^+)$ contains $ZG$. Therefore we may speak of the subcategory $\calD(\psi,\chi)_\sigma$ on which $ZG$ acts through $\sigma$. By \cite[\S 4.4.1]{Yun14} the geometric Hecke operator $\bT_V$ sends $\calD(\psi,\chi)_\sigma$ to $\calD(\Gm,\psi,\chi)_\sigma$ and by Proposition \ref{p:eigencat} $\bT_V(\cA_\sigma)[1]$ is perverse. Therefore we have
\[\bT_V(\cA_\sigma)=\bT_V(\cA_\sigma)_\sigma \cong ({\sigma}^{-1}\otimes(\id_S\times y)^*\bT_V(\cA_\sigma))^{ZG} \boxtimes  j_!\cF_\sigma \] 
using the decomposition above. Again by Proposition \ref{p:eigencat} $\bT_V(\cA_\sigma)$ is locally constant along $\Gm$ and the claim follows.

\end{proof}

%
%
%

\subsection{Local monodromy and rigidity}\label{ss:rigidity} In this subsection we assume that $G$ is split, so that $\LG=\dG$. Let $(\bP_{\infty}, \psi,\chi, \bQ_{0})$ be a euphotic automorphic datum. By Theorem \ref{th:eigen loc}, we have a $\dG$-local system $E_{\s}$ over $\GG_{m,\ov k}$ for each indecomposable summand $\cP_{\s}$ of $\cP^{ss}(\psi,\chi)$.  Let $\r_{\s}: \pi_{1}(\GG_{m,\ov k},\ov \y)\to \dG(\Qlbar)$ be the geometric monodromy representation attached to $E_{\s}$.

\sss{Lusztig's bijection} To describe the local monodromy of $E_{\s}$ at $0$, recall Lusztig's bijection \cite[Theorem 4.8]{Lus89}:
\begin{equation}\label{L bij}
\xymatrix{\{\textup{two-sided cells in }W_{\aff}\}\ar@{<->}[r] & \{ \textup{unipotent classes in } \dG\}}
\end{equation}

\begin{prop}\label{p:mono 0} 
For any local system $E_{\s}$ attached to the euphotic automorphic datum 
\[(\bP_{\infty}, \psi,\chi, \bQ_{0}),\]

the local monodromy $\r_{\s}|_{I_{F_{0}}}$ ($I_{F_{0}}$ is the inertia group at $0$) is tame, and maps a topological generator of $I^{\tame}_{F_{0}}$ into the unipotent class of $\dG$ which corresponds to the two-sided cell $c_{\bQ}$ of $W_{\aff}$ containing the longest element of $W_{\bQ}$ under Lusztig's bijection \eqref{L bij}. 
\end{prop}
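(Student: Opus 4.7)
My plan is to extract the local monodromy of $E_\sigma$ at $0$ by taking nearby cycles of the geometric Hecke kernel as the modification point $x \in \Gm$ specializes to $0$, then to identify the resulting unipotent class via Gaitsgory's central functor and Lusztig's theory. This parallels the argument used when $\bQ_0$ is Iwahori in \cite{Yun16}, but requires a parahoric generalization.

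First, I would extend the Hecke correspondence over a formal neighborhood of $0$ by letting the modification point vary in a disk. The generic fiber is the spherical affine Grassmannian $\Gr_G$, while the special fiber at $x=0$ is the parahoric affine flag variety $\Fl_\bQ = G(F_0)/\bQ_0$. Gaitsgory's nearby cycles construction provides a central functor
\begin{equation*}
Z : \Perv_{G(\cO_0)}(\Gr_G) \longrightarrow \Perv_{\bQ_0}(\Fl_\bQ),
\end{equation*}
and each sheaf $Z_V := Z(\IC_V)$ carries a canonical unipotent monodromy endomorphism coming from the action of a topological generator of $I^{\tame}_{F_0}$. Globalizing this, the nearby cycles of $\bT_V(\cA)$ along $x \to 0$ is computed by acting on $\cA$ via $Z_V$ at the level $\bQ_0$ at $0$; because the wild level at $\infty$ is geometrically transversal to the disk at $0$, proper base change ensures compatibility with the $(V_\bP \times T_\psi, \AS_\psi \boxtimes \cK_\chi)$-equivariance.

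Second, by Theorem \ref{th:eigen loc}(ii), for any $\cA \in \cP_\sigma$ the nearby cycles of $\bT_V(\cA) \cong E_\sigma(V) \boxtimes (\cdots\bullet\cA)$ at a point specializing to $0$ realizes the monodromy action of $I_{F_0}$ on $E_\sigma(V)$ on the left-hand side, and the unipotent monodromy on $Z_V$ on the right-hand side. Ranging over all $V \in \Rep(\dG)$ (including faithful ones), this forces $\rho_\sigma|_{I_{F_0}}$ to factor through $I^{\tame}_{F_0}$ and to act on every $V$ by a unipotent element; let $u_\sigma \in \dG$ represent the resulting conjugacy class. This establishes tameness and unipotence.

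Third, to identify $[u_\sigma]$ with the unipotent class Lusztig-associated to $c_\bQ$, I would invoke Bezrukavnikov's description of the center of the affine Hecke category of type $\bQ_0$. The monodromy on $Z_V$ picks out a distinguished central element whose image in $\dG$ (up to conjugacy) is characterized by the two-sided cell supporting the $\bQ_0$-standard module in the Kazhdan-Lusztig parametrization. A standard calculation with the Kazhdan-Lusztig $\mathbf{a}$-function on $W_\aff$ then shows that the cell so obtained is precisely $c_\bQ$, the cell containing the longest element of $W_\bQ$, matching Lusztig's bijection \eqref{L bij}.

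\textbf{The main obstacle} will be to carry out Step 1 rigorously in the presence of the wild equivariance data at $\infty$: one must construct a family of Hecke stacks over a disk at $0$ in which the level $U_\psi \bP_\infty^{++}$ at $\infty$ is kept rigidly, and check that Gaitsgory's nearby cycles commute with this equivariance. Once the central functor $Z$ is available in this setting, Step 2 is formal and Step 3 reduces to the classical combinatorics of two-sided cells of $W_\aff$ and Lusztig's bijection, already used in \cite{Yun16}.
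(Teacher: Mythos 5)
Your outline matches the paper's argument, which is itself a reference to \cite[\S 4.11--4.18]{Yun16}: specialize the Hecke kernel to $0$, apply Gaitsgory's nearby-cycles/central functor landing in the parahoric Hecke category at $\bQ_0$, extract unipotence and tameness from the monodromy on the central sheaf, and identify the class via Bezrukavnikov and Lusztig's cell bijection. One small correction of emphasis: the ``main obstacle'' you flag (compatibility of nearby cycles with the wild equivariance data at $\infty$) is not what the paper singles out as needing new input; the only adaptation the authors actually make is to the analogue of \cite[Lemma 4.12]{Yun16}, namely that any $\cK\in\Perv_{\bQ_0}(G\lr{t}/\bQ_0)$ acts t-exactly on $\cD(\psi,\chi)$, which they prove by showing that the restriction $\orr{h}'$ of the Hecke map at $0$ over $\Bun_{[1]}$ is ind-affine (since $\bQ_0$ is now an arbitrary parahoric contained in $\bP_0$, rather than $\bP_0$ itself). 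Including that t-exactness argument would close the gap between your outline and a complete proof.
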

\begin{proof} It suffices to prove the case where $G$ is simply-connected. The proof is almost the same as in the epipelagic case, cf. \cite[\S4.11-4.18]{Yun16}, replacing $\bP_0$ in {\em loc.cit.} by $\bQ_0$.  The only thing that needs to be adapted in our situation is \cite[Lemma 4.12]{Yun16}. Here the analogous statement should be: consider the action of $D_{\bQ_{0}}(G\lr{t}/\bQ_{0})$ on $\cD(\psi,\chi)$, then any perverse sheaf $\cK\in \Perv_{\bQ_{0}}(G\lr{t}/\bQ_{0})$ acts as a t-exact endo-functor of $\cD(\psi,\chi)$. Consider the Hecke correspondence $\Hk_{0}$ that classifies modifications of $\Bun$ at $0$, with two maps $\oll{h},\orr{h}: \Hk_{0}\to\Bun$. Let $\Hk'_{0}=\oll{h}^{-1}(\Bun_{[1]})\cap \orr{h}^{-1}(\Bun_{[1]})$. Since $\Bun_{[1]}$ is the preimage of the  open stratum $[\pt/L]\subset \Bun_{G}(\bP_{0}, \bP_{\infty})$, we may identify fibers of $\orr{h}'=\orr{h}|\Hk'_{0}: \Hk'_{0}\to \Bun_{[1]}$ with $\Aut_{X-\{0\}}(\cE_{\bP_{0},\bP_{\infty}})/L$ where $\cE_{\bP_{0},\bP_{\infty}}$ is the open point in  $\Bun_{G}(\bP_{0}, \bP_{\infty})$ with automorphism $L$. Equivalently, we may identify $\Aut_{X-\{0\}}(\cE_{\bP_{0},\bP_{\infty}})/L$  with $\Aut_{X-\{0\}}(\cE_{\bP_{0},\bP^{+}_{\infty}})$ by choosing a $\bP^{+}_{\infty}$-reduction of $\cE_{\bP_{0},\bP^{+}_{\infty}}$. From this we see that the fibers of $\orr{h}'$ are ind-affine. Since $\orr{h}'$ is a Zariski locally trivial fibration, it is ind-affine. 
\end{proof}

\begin{prop}\label{p:rig} Under the assumptions in the beginning of \S\ref{ss:rigidity}, assume further
\begin{itemize}
\item The restriction $\r_{\s}|_{I_{F_{\infty}}}$ is as predicted in \S\ref{sss:local L param}.
\item The image of $\r_{\s}$ does not lie in any proper Levi subgroup of $\dG$ (equivalently, $\Aut(E_{\s})$ is finite).
\end{itemize}
Then $E_{\s}$ is cohomologically rigid in the sense that
\begin{equation*}
\cohog{*}{X_{\kbar}, j_{!*}\Ad(E_{\s})}=0.
\end{equation*} 
Here $j:\Gm\incl X$ is the open embedding,  and $\Ad(E_{\s})$ is the adjoint local system attached to $E_{\s}$.
\end{prop}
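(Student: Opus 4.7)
The plan is the standard cohomological criterion: assuming $\cohog{0}{X_{\kbar}, j_{!*}\Ad(E_\s)}$ and $\cohog{2}{X_{\kbar}, j_{!*}\Ad(E_\s)}$ vanish, the remaining $\upH^1$ vanishes iff the Euler characteristic $\chi(X_{\kbar}, j_{!*}\Ad(E_\s))$ does, and the latter I would compute via Grothendieck--Ogg--Shafarevich using the prescribed local invariants and Swan conductors.

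For $\upH^0$: on a smooth curve the intermediate extension of a local system is $j_*$ (up to the standard perverse shift), so $\cohog{0}{X_{\kbar}, j_{!*}\Ad(E_\s)}$ equals the $\pi_1(\Gmg)$-invariants of $\Ad(E_\s)$, i.e., the Lie algebra of $Z_{\dG}(\r_\s(\pi_1))=\Aut(E_\s)$. Since $\Aut(E_\s)$ is finite by the second hypothesis, $\upH^0=0$. For $\upH^2$, Verdier duality for middle extensions on a smooth curve identifies $\cohog{2}{X_{\kbar}, j_{!*}\Ad(E_\s)}^\vee$ with $\cohog{0}{X_{\kbar}, j_{!*}\Ad(E_\s)^\vee}$, and the running assumption that $\frg$ carries a nondegenerate $\GG$-invariant bilinear form yields $\Ad(E_\s)^\vee\cong \Ad(E_\s)$, so $\upH^2=0$ as well.

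Using $\chi_c(\Gmg)=0$, Grothendieck--Ogg--Shafarevich gives
\[
\chi(X_{\kbar}, j_{!*}\Ad(E_\s))=-\Swan_0\Ad(E_\s)-\Swan_\infty\Ad(E_\s)+\dim\Ad(E_\s)^{I_0}+\dim\Ad(E_\s)^{I_\infty}.
\]
Proposition \ref{p:mono 0} yields $\Swan_0=0$ and $\dim\Ad(E_\s)^{I_0}=\dim\dg^{u_\bQ}$, where $u_\bQ$ is the unipotent class attached to $c_\bQ$ by Lusztig's bijection \eqref{L bij}. The first hypothesis (agreement of $\r_\s|_{I_{F_\infty}}$ with the prediction of \S\ref{sss:local L param}) supplies $\Swan_\infty\Ad(E_\s)=\dim L-\dim L_\psi$ via \eqref{Sw} and $\dim\Ad(E_\s)^{I_\infty}=\rk L_\psi$ via \eqref{ginv}. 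So $\chi=0$ reduces to the identity
\[
\dim\dg^{u_\bQ}=\dim L-\dim L_\psi-\rk L_\psi.
\]

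The main obstacle is verifying this identity, and I would attack it in two steps. First, Definition \ref{d:qep data}(\ref{open B orbit}) applied at the neutral element $w=1\in\Om$ (where $Y_1=L/Q$) states that $B_\psi$ acts on $L/Q$ with open orbit and finite stabilizer; equating dimensions gives
\[
\tfrac12(\dim L_\psi+\rk L_\psi)=\dim B_\psi=\dim L/Q=\tfrac12(\dim L-\dim L_\bQ),
\]
equivalently $\dim L_\bQ=\dim L-\dim L_\psi-\rk L_\psi$. Second, I would invoke the known description of Lusztig's bijection for affine Weyl groups, according to which the two-sided cell $c_\bQ$ containing $w_0^\bQ$ corresponds to the unipotent class Richardson-induced from the trivial class of the Levi of the parahoric of $\dG$ dual to $\bQ_0$; for such a Richardson class $\dim\dg^{u_\bQ}=\dim L_\bQ^\vee=\dim L_\bQ$. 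Combining the two halves gives $\chi=0$ and completes the proof. The nontrivial input is the Richardson-class identification of the Lusztig-dual of $c_\bQ$, for which a precise citation to Lusztig's work on cells in affine Weyl groups (or to the Bezrukavnikov--Mirkovi\'c story on Langlands duality for affine Hecke categories) is the cleanest way to close the argument.
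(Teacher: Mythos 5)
Your overall architecture — reduce to $H^0=H^2=0$ plus vanishing of $\chi$, compute $\chi$ by Grothendieck--Ogg--Shafarevich, plug in the local data from Proposition \ref{p:mono 0} and the predictions \eqref{Sw}, \eqref{ginv}, then close with the dimension identity coming from the open-orbit condition — coincides with the paper's proof step for step. The paper packages the reduction as a four-term exact sequence rather than splitting off $H^0$ and $H^2$, but the resulting identity \eqref{sum0} is precisely your $\chi=0$ once $\dg^{\r_\s(\pi_1)}=0$ is taken into account.

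The one place where you diverge is the evaluation of $\dim\dg^{\r_\s(I_{F_0})}$. You invoke a Richardson-class description of the Lusztig dual of $c_\bQ$ and then compute $\dim\dg^{u_\bQ}=\dim L_\bQ$. Be careful here: it is not true in general that Lusztig's bijection \eqref{L bij} sends the two-sided cell containing $w_{\bQ,0}$ to the Richardson orbit induced from the parabolic with Levi $L_\bQ^\vee$; the bijection always lands on a special orbit, while Richardson orbits need not be special. What the bijection \emph{does} guarantee, and what the paper uses via \cite[Lemma 4.6]{Yun16}, is only the $a$-function compatibility $\dim\cB_u=a(c_\bQ)=\ell(w_{\bQ,0})$, whence $\dim\dg^{u_\bQ}=\rk\dG+2\dim\cB_u=\rk\dG+2\ell(w_{\bQ,0})=\dim L_\bQ$. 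This is the same numerical conclusion you need, obtained without committing to the Richardson identification. So your argument has a gap in the stated justification, but replacing the Richardson claim by the $a$-function equality repairs it, and the rest of your proof then agrees with the paper's.
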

\begin{proof}
We have the exact sequence
\begin{equation*}
0\to \dg^{\r_{\s}(\pi_{1}(\GG_{m,\kbar}))}\to \dg^{\r_{\s}(I_{F_{0}})}\oplus \dg^{\r_{\s}(I_{F_{\infty}})} \to \cohoc{1}{\GG_{m,\kbar}, \Ad(E_{\s})} \to \cohog{1}{X_{\kbar}, j_{!*}\Ad(E_{\s})}\to 0.
\end{equation*}
So the cohomological rigidity condition boils down to
\begin{equation}\label{sum0}
\dim\cohoc{1}{\GG_{m,\kbar}, \Ad(E_{\s})}-\dim\dg^{\r_{\s}(I_{F_{0}})}-\dim\dg^{\r_{\s}(I_{F_{\infty}})}+\dg^{\r_{\s}(\pi_{1}(\GG_{m,\kbar}))}=0.
\end{equation}
By the second assumption, $\dim\dg^{\r_{\s}(\pi_{1}(\GG_{m,\kbar}))}=0$. Let $u\in \dG$ be a unipotent element in the class corresponding to the longest element $w_{\bQ,0}$ of $W_{\bQ}$. By Proposition \ref{p:mono 0}, $\dim\dg^{\r_{\s}(I_{F_{0}})}=\dim\dG_{u}=\rk \dG+2\dim \cB_{u}$ ($\cB_{u}$ is the Springer fiber of $u$).  By \cite[Lemma 4.6]{Yun16}, we have $\dim \cB_{u}=\ell(w_{\bQ,0})$. Hence
\begin{equation*}
\dim \dg^{\r_{\s}(I_{F_{0}})}=\rk \dG+2\ell(w_{\bQ,0})=\dim L_{\bQ}. 
\end{equation*}
By \eqref{ginv}, $\dim\dg^{\r_{\s}(I_{F_{\infty}})}=\dim T_{\psi}$. 
By the Grothendieck-Ogg-Shafarevich formula, $\dim\cohoc{1}{\GG_{m,\kbar}, \Ad(E_{\s})}=\Swan(\Ad(E_{\s}))=\dim L-\dim L_{\psi}$ as predicted in \S\ref{sss:local L param}. Using these calculations, \eqref{sum0} is equivalent to
\begin{equation*}
\dim L-\dim L_{\bQ}-\dim T_{\psi}=\dim L_{\psi}.
\end{equation*}
Since $\dim B_{\psi}=(\dim T_{\psi}+\dim L_{\psi})/2$, $\dim L-\dim L_{\bQ}=2\dim L/Q$, the above identity is equivalent to
\begin{equation*}
\dim L/Q=\dim B_{\psi},
\end{equation*}
which is guaranteed by the condition \eqref{open B orbit} in Definition \ref{d:qep data}.
\end{proof}


\section{An example in type $G_2$} \label{s:G2}
From this section on, we will give several families of examples of (strict) euphotic automorphic data.

\subsection{The rigid connection from \cite{Ja20}} The motivating example for our construction of rigid automorphic data is a certain rigid irregular $G_2$-connection discovered by the first-named author. By \cite[Theorem 1.1.]{Ja20} there is a rigid irregular connection $\calE$ on $\GG_{m,\CC}$ with differential Galois group $G_2$ and with the following local data. At $z=0$ the connection is regular singular and has subregular unipotent monodromy. On the punctured formal disc $D^{\circ}$ at $z=\infty$ the connection $\calE$ is isomorphic to 
\[\textup{El}(z^2, \alpha, (\lambda, \lambda^{-1}))\oplus \textup{El}(z^2, 2\alpha, 1)\oplus (-1) \]
where by $(\lambda, \lambda^{-1})$ we denote a regular singular formal connection of rank two with monodromy $\lambda$ and $\lambda^{-1}$ and similarly for $(-1)$. The formal connection $\textup{El}(z^2, \alpha, (\lambda, \lambda^{-1}))$ is an elementary connection in the sense of \cite[\S 2]{Sa08}. It is the direct image of a formal exponential connection twisted by a regular singular connection along a twofold covering of the formal disc. \par
The local data of this connection dictates our guess for the local representations in \S \ref{sss:auto conditions}. The parahoric subgroup $\bQ_0$ should correspond to the unipotent conjugacy class of the connection at $z=0$ as in \S\ref{ss:rigidity}, (1). \par
The choice of the character $\psi$ corresponds to the occurence of the formal exponential connection, an additive parameter, and the character $\chi$ reflects the multiplicative parameter at $z=\infty$ of the formal connection, given by the regular singular connection $(\lambda, \lambda^{-1})$. Note that in addition the formal connection at $z=\infty$ becomes diagonalisable after pullback to a two-fold cover. 

\subsection{Constructing the automorphic form}
Assume $G$ is split of type $G_2$ and denote by $\Delta=\{\alpha_1, \alpha_2\}$ the simple roots of $G_2$ where $\alpha_1$ is the long root.
Consider the parahoric $\bP_{\infty}$ with $L\cong\SO_{4}$ with roots $\alpha_2$ and the highest root $\eta$. We have $V=\Sym^{3}(\St)\ot\St'$, where $\St$ is the standard representation of the short root $\SL_{2}\incl \SO_{4}$, and $\St'$ is the standard representation of the long root $\SL_{2}$. In this case $m=2$ and $V\cong V^{*}$. We may identify $V^{*}$ with the space of bihomogeneous polynomials in two sets of variables $(x,y)$ and $(u,v)$ that are cubic in $(x,y)$ and linear in $(u,v)$. Then take $\psi=x^{3}u+y^{3}v$. We have $L_{\psi}\cong \Gm\rtimes \mu_{2}$: the projection $L_{\psi}\to \PGL_{2}$ to the short root factor is an isomorphism onto the normalizer of a maximal torus $A$ in $\PGL_{2}$; the other projection $L_{\psi}\to \PGL_{2}$ onto the long root factor has image $N_{\PGL_{2}}(A)$ with kernel $\mu_{3}$. We then have $T_\psi=B_{\psi}=\Gm\subset L_{\psi}$ (with index $2$) acting as $t\cdot(x,y,u,v)=(tx,t^{-1}y, t^{-3}u, t^{3}v)$. Take $Q\subset L$ such that $L/Q\cong \PP^{1}$ is the flag variety of the short root factor. The choice of $Q$ determines the parahoric $\bQ_0$. Then $T_{\psi}$ acts on $L/Q$ with an open free orbit.
\begin{prop} Let $\chi:T_\psi(k) \rightarrow \Qlbar^\times$ be a non-trivial character. The automorphic datum $(\bP_\infty,\psi,\chi,\bQ_0)$ is euphotic and strict. 
\end{prop}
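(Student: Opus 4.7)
The plan is to verify the three conditions of Definition~\ref{d:qep data} in turn. Since $G$ is simply-connected of type $G_2$, the group $\Om$ is trivial, so condition~(1) reduces to the case $w=1$, where $Y_1 = L/Q \cong \PP^1$. This is handled by the stated fact that $T_\psi$ acts on $\PP^1$ with an open free orbit, combined with the observation that $B_\psi = T_\psi = \Gm$ (since $L_\psi^\circ = \Gm$).

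For conditions~(2) and (3), I would exploit that $T_\psi$ is one-dimensional: its only nontrivial closed subtorus is itself, so $Y_w$ is spectrally meager as an $L_\psi$-scheme if and only if $T_\psi$ acts trivially on every geometric point of $Y_w$. Under this equivalence $\cS(Y_w)$ consists of at most the single torus $T_\psi$, and condition~(3) reduces to $\cK_\chi|_{T_\psi}$ being nontrivial, which holds by the hypothesis that $\chi$ is nontrivial.

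The main content is therefore to show that for every $w \in \tilW$ with $w \notin W_\bP W_\bQ$, the Hessenberg variety $Y_w$ is pointwise $T_\psi$-fixed. I would proceed by first reducing to finitely many cases: by Remark~\ref{r:sph}(3) only finitely many distinct pairs $(Q_w, V_w^\bot)$ occur as $w$ ranges over the double coset space, and both depend only on the relative position of the facet $w\cdot\frF_\bQ$ with respect to $\frF_\bP$ in the apartment $\frA$. For each such relative position I would compute $Q_w \subset L = \SO_4$ and the $Q_w$-stable subspace $V_w^\bot \subset V_\bP^*$ from the formulas in \S\ref{sss:Yw}. Using the monomial basis of $V_\bP^* \cong \Sym^3(\St) \ot \St'$ and reading off the $T_\psi$-action $t\cdot(x,y,u,v) = (tx, t^{-1}y, t^{-3}u, t^3 v)$, the subspace $V_w^\bot$ is a sum of $T_\psi$-weight spaces. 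Verifying triviality of the $T_\psi$-action on $Y_w = \{\ell Q_w : \ell^{-1}\psi \in V_w^\bot\}$ then amounts to showing that if $\ell Q_w \in Y_w$ is not $T_\psi$-fixed then $\ell^{-1}\psi$ necessarily has a nonzero component in some weight space excluded from $V_w^\bot$, giving a contradiction.

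The main obstacle is the affine-Weyl-group bookkeeping: one must enumerate the double cosets of the parahorics $\bP_\infty \dot w \bQ_0$ in $G(F_\infty)$ (equivalently, $W_\bP \bs \tilW / W_\bQ$) and organize them by the finite list of resulting pairs $(Q_w, V_w^\bot)$. Once this list is in hand, each case is a finite check of weights against $\psi = x^3 u + y^3 v$. The one-dimensionality of $T_\psi$ cuts both ways: spectral meagerness becomes the strong condition that $T_\psi$ fixes $Y_w$ pointwise, but once verified, condition~(3) is automatic from the nontriviality of $\chi$.
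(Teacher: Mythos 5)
Your reductions at the start are correct and useful: for $G_{2}$ the group $\Om$ is trivial, so condition~(1) of Definition~\ref{d:qep data} reduces to the open orbit of $T_\psi$ on $L/Q\cong\PP^1$; and since $B_\psi=T_\psi\cong\Gm$ is one-dimensional, spectral meagerness of $Y_w$ is equivalent to $T_\psi$ fixing $Y_w$ pointwise, which in turn makes condition~(3) automatic for any nontrivial $\chi$.

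The gap is in how you plan to show pointwise $T_\psi$-fixedness of $Y_w$ for $w\notin W_\bP W_\bQ$. You propose to verify, case by case, the implication ``$\ell Q_w$ not $T_\psi$-fixed $\Rightarrow$ $\ell^{-1}\psi$ has a nonzero component outside $V_w^\bot$'' by inspecting $T_\psi$-weights of $V_\bP^*$. This is not a pure weight computation: the hypothesis (``$\ell Q_w$ not $T_\psi$-fixed'', i.e.\ $\ell^{-1}T_\psi\ell\not\subset Q_w$) and the conclusion (a condition on where $\ell^{-1}\psi$ lies in $V_\bP^*$) live in different places, and it is not a priori clear that one forces the other; you would need additional structural input for each $w$. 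You also identify ``affine Weyl group bookkeeping'' as the main obstacle, but the bookkeeping is routine — the genuine content is elsewhere.

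The paper instead proves the stronger statement that for $w\neq 1$, $Y_w$ is either empty or finite, which immediately gives pointwise $T_\psi$-fixedness since $T_\psi$ is a connected one-dimensional torus. The emptiness is shown by two arguments your proposal is missing: (a) if all weights of $V_w^\bot$ lie in an open half-space, then $\ell^{-1}\psi\in V_w^\bot$ would force $0\in\overline{T'\cdot\psi}$ for some one-parameter subgroup $T'$, contradicting that $\psi$ has a closed $L$-orbit; and (b) if $V_w^\bot$ avoids the weights $\pm\alpha_1$ or $\pm\beta$ (with $\beta=\alpha_1+3\alpha_2$), then every element of $V_w^\bot$ is a reducible polynomial in $(x,y,u,v)$, contradicting irreducibility of $\psi=x^3u+y^3v$, which is preserved under the linear action of $\ell$. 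These cut the problem down to a single $w$ (after $W_L$-symmetry), and for that one the paper shows $Y_w$ lies in the ramification locus of a degree-$3$ map $C\to\PP^1$, hence is finite. Your proposal would need to rediscover arguments of this kind; the weight table alone does not suffice.
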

\begin{proof} Note that for dimension reasons any point in $L/Q$ outside the open $B_\psi$-orbit will have a positive dimensional stabilizer. Therefore since $L_\psi$ is a torus this immediately implies that $(\bP_\infty, \psi, \chi, \bQ_0)$ is strict if it is euphotic. \par For $w\neq \id$ we will prove that if $Y_w\neq \emptyset$ then $Y_w$ is finite. The first step is to single out the cases in which $Y_w$ is empty. \par
Suppose $w$ is given such that all weights of $V_w^\perp$ lie in a half-space in $\xch(A)_\RR$ not containing $0$. For ${}^{\ell^{-1}} \psi \in V_w^\perp$ we can then find a torus $T'$ such that $0\in \overline{T'.{}^{\ell^{-1}}\psi}$ and since the orbit of $\psi$ is closed this implies $\psi=0$, a contradiction. In this case we therefore get $Y_w=\emptyset$. \par
Let $\beta=\alpha_1+3\alpha_2$ and suppose $w$ is given such that $-\alpha_1$ and $\beta$ (resp. $\alpha_1$ and $-\beta$) are not weights of $V_w^\perp$. In this case every $v\in V_w^{\perp}$ is a reducible polynomial contradicting the irreducibility of $\psi$. Again this implies $Y_w=\emptyset$. \par 
These observations determine a region $\cU \subset \xcoch(A)_\RR$ such that $Y_w=\emptyset$ if $wx_{\bQ}\notin \cU$ in the following way. Recall that for example $-\beta$ is a weight of $V_w^{\perp}$ if and only if 
\[\langle -\beta,wx_{\bQ}-x_{\bP}\rangle < \frac{1}{2}. \]
This is equivalent to 
\[\langle -\beta,wx_{\bQ}-x_{\bP}\rangle \le \frac{1}{2}-\frac{1}{5} \]
and noting that $\beta(x_{\bP})=1/2$ this is furthermore equivalent to 
\[\langle \beta, wx_{\bQ} \rangle \ge \frac{1}{5} .\]
Combining all cases in which $Y_w=\emptyset$ we find that if $Y_w\neq \emptyset$ then $wx_{\bQ}\in \cU$ with 
\[\cU = \{x\in \xcoch(A)_\QQ \mid 0 < \langle \a_1, x\rangle < 1  \textup{\, or \, } 0 < \langle \b, x\rangle < 1 \},\]
a union of two strips in the plane. 
\begin{figure} 

\resizebox{8cm}{!}{

\begin{tikzpicture}

\clip (-5,-5) rectangle (5,5);

\foreach\i in {-2,-1,0,2}{
 \draw[-, thick](\i*1.73, -5)--(\i*1.73,5);
}
 \draw[-, thick, name path=v](1.73, -5)--(1.73,5);
\foreach\i in {-1,1}{
 \draw[-, thick](-6,\i*3)--(6,\i*3);
}
 \draw[-, thick, name path=h](-6,0)--(6,0);

\foreach\i in {-3,...,3}{
 \draw[-, thick](-1.73*3,-3+\i*2)--(1.73*3,3+\i*2);
}
\draw[-,very thick,red, name path=a](-1.73*3,-3)--(1.73*3,3);
\draw[-,very thick,red, name path=b](-1.73*3,-3-2)--(1.73*3,3-2);

\foreach\i in {-3,...,3}{
 \draw[-, thick](-1.73*3,3+\i*2)--(1.73*3,-3+\i*2);
}
\draw[-,very thick,red, name path=c](-1.73*3,3)--(1.73*3,-3);
\draw[-,very thick,red, name path=d](-1.73*3,3+2)--(1.73*3,-3+2);

\foreach\i in {-2,...,2}{
 \draw[-, thick](\i*3.46-3.46,-6)--(\i*3.46+3.46,6);
}

 \draw[-, very thick, blue](-3.46,-6)--(3.46,6) ;

\foreach\i in {-2,...,2}{
 \draw[-, thick](\i*3.46+3.46,-6)--(\i*3.46-3.46,6);
}
\path [name intersections={of=a and c,by=x1}];
\path [name intersections={of=b and c,by=x2}];
\path [name intersections={of=v and h,by=x3}];

\fill [opacity=0.4,gray]
            (x1) \foreach \i in {2,3}{ -- (x\i) } -- cycle; 

\end{tikzpicture}
}

\caption{Affine root system of type $G_2$ with the region $\cU$, the line ${\alpha_1+2\alpha_2=0}$ and the fundamental alcove. }

\end{figure}
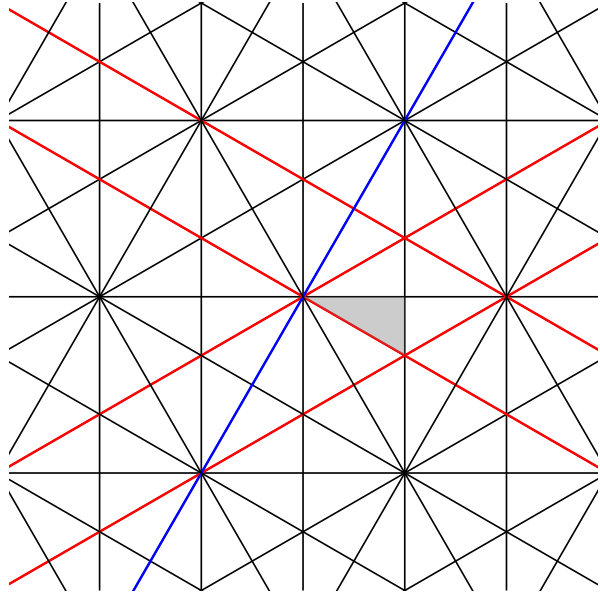

It remains to prove that $Y_w$ is finite whenever $wx_{\bQ}\in \cU$. By $W_L$-symmetry it suffices to consider just one quadrant, e.g. the one defined by $\langle \eta,x\rangle >0$ and $\langle \alpha_2 , x \rangle >0$. This leaves us with the case where $wx_{\bQ}$ lies in the above quadrant and additionally satisifes $\langle \alpha_1+2\alpha_2, wx_{\bQ} \rangle <0$ and the case where $w=s_{\alpha_1+\alpha_2}$ is a simple reflection across the hyperplane perpendicular to $\alpha_1+\alpha_2$. Note that $Y_{w'} \subset Y_w$ whenever $w'$ is in the first case and $w=s_{\alpha_1+\alpha_2}$, so actually it suffices to prove that $Y_w$ is finite in the second case. \par
In this case we have that $L/Q_w=\PP^1\times \PP^1$ and $V_w^*$ is the direct sum of weight spaces corresponding to 
\[\{\alpha_1, \a_1+\a_2, \a_1+2\a_2, \beta, -(\a_1+2\a_2), -\beta\}.\]
Let $C\subset \PP^1_{(x:y)}\times \PP^1_{(u:v)}=L/Q_w$ be the curve defined by $\psi=x^3u+y^3v=0$. If ${}^{\ell^{-1}} \psi\in V_w^*$ then $\psi(\ell \mod Q_w)=0$. Moreover in this case the projection $\pi:C\rightarrow \PP^1$ onto the second factor (which is a finite map of degree $3$) is ramified in $\ell \mod Q_w$. Thus $Y_w$ is contained in the ramification locus of $\pi$ which is finite. 
\end{proof}

\begin{cor}  There is a unique cuspidal automorphic representation satisfying 
\begin{enumerate}
\item $\pi_{x}$ is unramified for $x\ne 0,\infty$;
\item $\pi_{0}^{\bQ_{0}}\ne0$;
\item $\pi_{\infty}$ is euphotic with respect to $(\bP_{\infty}, \psi,\chi)$.
\end{enumerate} 
In addition $\dim \pi_{0}^{\bQ_{0}}=\dim \pi_{\infty}^{(\bP_{\infty}^{+},\psi)}=1$ and $\pi$ appears with multiplicity one in the automorphic spectrum of $G$.
\end{cor}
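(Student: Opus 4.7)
The plan is to compute $\dim \cF = 1$ for the space $\cF$ defined in \S\ref{sss:auto conditions}, then apply the spectral decomposition of cusp forms to deduce uniqueness, multiplicity one, and the dimension-one local invariants. The preceding proposition established that $(\bP_\infty, \psi, \chi, \bQ_0)$ is a euphotic automorphic datum, so Proposition~\ref{c:fdim} guarantees that $\cF$ is finite-dimensional and consists of cusp forms.

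Using the Birkhoff decomposition I reduce to a single coset. For $G = G_2$ the lattice $\xcoch(A)$ equals the coroot lattice, so $\Om = \tilW/W_\aff$ is trivial and $W_\bP \Om W_\bQ = W_\bP W_\bQ$. For $[w] \notin W_\bP W_\bQ$, the proof of the proposition shows that $Y_w$ is either empty or finite. In the latter case the $\Gm$-action on a finite scheme must factor through the point (since $\Gm$ is connected), so every stabilizer is all of $\Gm$; hence $Y_w$ is spectrally meager with $\cS(Y_w) \subseteq \{T_\psi\}$. Since $\chi$ is non-trivial on $T_\psi(k)$, Corollary~\ref{c:Fw zero} yields $\cF_{[w]} = 0$ for these $[w]$. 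For $[w] = [1]$, Lemma~\ref{l:Fw} gives
\[
\cF_{[1]} \;\cong\; \Fun(\PP^1(k))^{(B_\psi(k),\chi)}.
\]
The torus $T_\psi = B_\psi \cong \Gm$ acts on $L/Q \cong \PP^1$ with two fixed points and an open free orbit $U$; by Hilbert~90 the $\Gm$-torsor $U$ is trivial over $k$, so $U(k)$ is a single free $T_\psi(k)$-orbit. Non-triviality of $\chi$ kills functions supported at the fixed points and cuts out a one-dimensional space on $U(k)$, giving $\dim \cF = 1$.

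Next apply the spectral decomposition: since $\cF$ consists of cusp forms,
\[
\cF \;\cong\; \bigoplus_\pi m(\pi) \cdot \pi_0^{\bQ_0} \otimes \pi_\infty^{(B_\psi \bP_\infty^+,\mu)} \otimes \bigotimes_{x \neq 0,\infty} \pi_x^{G(\cO_x)},
\]
where $\pi$ ranges over cuspidal automorphic representations satisfying conditions (1)--(3) of the corollary. Each factor has dimension $\ge 1$ under these hypotheses, and $\dim \pi_x^{G(\cO_x)} = 1$ for unramified $\pi_x$. The identity $\dim \cF = 1$ therefore forces a unique $\pi$ to appear in the sum, with $m(\pi) = 1$, $\dim \pi_0^{\bQ_0} = 1$, and $\dim \pi_\infty^{(B_\psi \bP_\infty^+,\mu)} = 1$.

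The main subtle point is identifying the full Artin--Schreier eigenspace $\pi_\infty^{(\bP_\infty^+,\psi)}$, which is an $L_\psi(k) = (\Gm \rtimes \mu_2)(k)$-module a priori larger than its $(B_\psi(k),\chi)$-eigen subspace. To conclude $\dim \pi_\infty^{(\bP_\infty^+,\psi)} = 1$, one must rule out the appearance of any other $T_\psi$-character $\chi'$ in this space. The cleanest way is to repeat the computation of $\cF$ with $\chi$ replaced by $\chi'$ and argue that any such contribution would correspond to a distinct cuspidal automorphic representation related to $\pi$ by the $\mu_2$-twist; alternatively, an irreducibility/cleanness argument for the relevant $L_\psi(k)$-module following the pattern of Lemma~\ref{l:simple obj} can isolate the single character $\chi$.
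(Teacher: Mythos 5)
Your computation of $\dim\cF = 1$ and the deduction of uniqueness, multiplicity one, and $\dim\pi_0^{\bQ_0}=1$ via the cuspidal spectral decomposition follows the paper's route; the paper's proof is just the terse remark that $\cF$ is one-dimensional, and you have supplied the details correctly (Birkhoff decomposition, vanishing of $\cF_{[w]}$ for $[w]\neq[1]$, and the count of $(B_\psi(k),\chi)$-eigenfunctions on $\PP^1(k)$).

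You are right to flag the final claim $\dim\pi_\infty^{(\bP_\infty^+,\psi)}=1$ as the genuinely delicate point: the equality $\dim\cF=1$ only yields directly that the $(B_\psi(k)\bP_\infty^+,\mu)$-eigenspace of $\pi_\infty$ is one-dimensional, which is a priori a proper subspace of the full Artin--Schreier eigenspace $\pi_\infty^{(\bP_\infty^+,\psi)}$. But your two suggested resolutions do not close the gap, and the $\mu_2$-twist you invoke actually cuts the other way. Since $L_\psi(k)\cong k^\times\rtimes\mu_2(k)$ with the nontrivial $\sigma\in\mu_2(k)$ inverting $T_\psi(k)$, and since $\sigma$ fixes $\psi$ (it lies in $L_\psi$), the operator $\pi(\sigma)$ carries a $(\bP_\infty^+,\psi)$-eigenvector that is a $(T_\psi(k),\chi)$-eigenvector to one that is a $(T_\psi(k),\chi^{-1})$-eigenvector inside the same space $\pi_\infty^{(\bP_\infty^+,\psi)}$. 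Hence whenever $\chi^2\neq 1$, this space contains two linearly independent vectors and has dimension at least $2$, contradicting the claim. (Repeating the $\cF$-computation with $\chi$ replaced by an arbitrary $\chi'$ only produces a one-dimensional $\cF(\chi')$ for each $\chi'$; it gives no a priori control on how many distinct $T_\psi(k)$-characters occur inside a single $\pi_\infty^{(\bP_\infty^+,\psi)}$.) So either the intended statement in the corollary is $\dim\pi_\infty^{(B_\psi\bP_\infty^+,\mu)}=1$ (which does follow at once from $\dim\cF=1$), or the hypothesis $\chi^2=1$ should be added; as written, the statement is stronger than what the proof of Proposition~\ref{c:fdim} provides, and your proposal, while correctly sensing the issue, does not actually prove it.
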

\begin{proof} This is immediate from the proof of Proposition \ref{c:fdim}. In this case the space of functions $\calF$ is one-dimensional and the statement follows. 
\end{proof}
Corollary \ref{cor:strict eigensheaf} implies the following geometric version of the above statement.
\begin{cor}
There is a Hecke eigensheaf $A_\pi$ on $\Bun_{G_2}(\bQ_0, \bP_\infty^{++})$ with semisimple eigen $G_2$-local system $E_\pi$. Under the assumptions in \S \ref{ss:rigidity} the local system $E_\pi$ is cohomologically rigid. 
\end{cor}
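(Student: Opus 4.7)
My plan is to deduce the claim directly from the general tools already established, since the preceding proposition verified that $(\bP_\infty,\psi,\chi,\bQ_0)$ is a strict euphotic automorphic datum.

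First, for the existence half, I would apply Corollary \ref{cor:strict eigensheaf} with $G=G_2$. The prerequisites are in place: $G_2$ is split and simply-connected, and strict euphoticity has just been proved. The corollary then gives a decomposition $\cP(\psi,\chi)=\bigoplus_{\sigma\in ZG^*}\cP_\sigma$, each summand containing a unique simple Hecke eigensheaf with semisimple eigen $\dG$-local system. Since the center of $G_2$ is trivial, the index set $ZG^*$ consists of a single character, so the entire category $\cP(\psi,\chi)$ forms one summand; extracting its unique simple object yields the desired Hecke eigensheaf $\cA_\pi$ with semisimple eigen $G_2$-local system $E_\pi$ on $\Gmg$.

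For cohomological rigidity of $E_\pi$, I would invoke Proposition \ref{p:rig}. Its two hypotheses---that $\r_\pi|_{I_{F_\infty}}$ has the form predicted in \S\ref{sss:local L param}, and that the image of $\r_\pi$ does not lie in any proper Levi subgroup of $\dG$---are precisely the assumptions of \S\ref{ss:rigidity} that the statement of the corollary already places us under. The remaining dimension bookkeeping inside the proof of Proposition \ref{p:rig} is driven by the open $B_\psi$-orbit condition in Definition \ref{d:qep data}, which is witnessed concretely in our setting: $L/Q\cong\PP^1$ and $B_\psi=T_\psi=\Gm$ are both one-dimensional, so the identity $\dim L/Q=\dim B_\psi$ is manifest. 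Proposition \ref{p:rig} then delivers $\cohog{*}{X_{\kbar},j_{!*}\Ad(E_\pi)}=0$.

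No genuine geometric or analytic obstacle remains at this stage: the hard work---the case analysis of the schemes $Y_w$ to establish strictness, the Tannakian extraction of a $\dG$-local system from a Hecke eigencategory, and the local monodromy computation at $0$---has already been carried out in earlier sections. The only point requiring minor care is verifying that the triviality of $Z(G_2)$ really does collapse the index set in Corollary \ref{cor:strict eigensheaf}, so that uniqueness of the eigensheaf follows at once rather than requiring a comparison of distinct central-character summands.
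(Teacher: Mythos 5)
Your argument is precisely the paper's: it proves strict euphoticity, applies Corollary \ref{cor:strict eigensheaf} noting that $Z(G_2)$ is trivial so the index set collapses, and then invokes Proposition \ref{p:rig} under its stated extra assumptions. Your extra observation that $\dim L/Q = \dim B_\psi = 1$ in this example correctly pins down the open-orbit identity used in the proof of Proposition \ref{p:rig}.
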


\section{The hyperspecial cases}\label{s: hyperspecial}

\subsection{The setup}

\sss{} In this section and the next, we assume that $G$ over $F$ is split and simply-connected. We consider the special case where $\bP_{\infty}=G(\cO_{\infty})$. The reductive quotient of $\bP_{\infty}$ over $k$ is $\GG$; by abuse of notation we will also denote $\GG$ by $G$, $\TT$ by $T$, etc.

In this case, the grading on $\frg$ is trivial, and $\psi\in\frg=\frg(-1)$. Extending $k$ if necessary, we may assume $\psi\in \Lie  T$. Then $L_{\psi}=G_{\psi}$ is a Levi subgroup of $G$.  We will use $P_{\psi}$ to denote a parabolic subgroup of $G$ containing $G_{\psi}$ as a Levi subgroup. Note that only the associate class of $P_{\psi}$ is well-defined.

Recall that $Q$ denotes another parabolic subgroup of $G$, the level at $0$, chosen in such a way that any Borel subgroup $B_\psi\subset G_\psi$ acts on the partial flag variety $G/Q$ with an open almost free orbit. This is equivalent to requiring that $G/P_\psi\times G/Q$ is a spherical $G$-variety and 
\begin{equation}\label{dim eq}
\dim G_{\psi}+\dim L_{Q}=\#\Phi_{G}
\end{equation}
where $\Phi_G$ is the set of roots of $G$ and $L_Q$ is the Levi quotient of $Q$. 

\sss{} Stembridge \cite{St03} has classified pairs of parabolic subgroups $(P_{\psi},Q)$ such that  $G/P_{\psi}\times G/Q$ is $G$-spherical. In type $A$ and $C$ this was preceded by work of Magyar-Weymann-Zelevinsky, see \cite{MWZ99} and \cite{MWZ00}. In this classification, the following are the ones that satisfy the dimension equality \eqref{dim eq}. There are no examples of exceptional types. 

\sss{Notation}

In the sequel we will concentrate on the case where $G$ is one of the groups $\SL(V), \Spin(V)$ or $\Sp(V)$, for some finite-dimensional vector space $V$ over $k$ ($\ch(k)\ne2$) equipped with a quadratic form in the case $G=\Spin(V)$ or a symplectic form in the case $G=\Sp(V)$. 

For $d\ge1$,  write $P_{d}\subset G$ for the stabilizer of a $d$-dimensional subspace, isotropic in the case outside type $A$. Similarly, for $1\le d<d'$, let $P_{d,d'}$ denote the stabilizer of a $d$-dimensional subspace inside a $d'$-dimensional subspace, both being isotropic outside type $A$.  

For parabolic subgroups $P'$ and $P''$ of $G$,  we write $(P_\psi,Q)\sim (P',P'') $ to denote that $P_\psi$ is conjugate to  $P'$ and $Q$ is conjugate to  $P''$. \par

Below we often base change to $\overline{k}$ without changing the notation. \par

\subsection{Type $A_{n-1}$, $n\ge 2$, \cite[Theorem 2.4]{MWZ99} }\label{orbitA} Let $\GG=\SL_{n}$ and let $G$ be the split form of $\GG$ over $F$.  Let $\l_{Q}$ be the partition of $n$ determined by the blocks of a Levi factor of $Q$. Let $\l_{\psi}$ be the  partition of $n$ determined by the multiplicities of the eigenvalues of $\psi$ (for example, $\l_{\psi}=(223)$ means that $\psi$ has three distinct eigenvalues, with multiplicities $2$, $2$ and $3$). 
The following are the only cases where $B_{\psi}$ has an open orbit on $L/Q$ with finite stabilizers.
\begin{enumerate}
\item $n\ge2$, $\l_{Q}=(1,n-1)$, $\l_{\psi}=(1^{n})$; 
\item $n\ge2$, $\l_{Q}=(1^{n})$, $\l_{\psi}=(1, n-1)$;
\item $n=2m$, $m\ge2$, $\l_{Q}=(m,m)$, $\l_{\psi}=(m,m-1,1)$;
\item $n=2m$, $m\ge2$, $\l_{Q}=(m,m-1,1)$, $\l_{\psi}=(m,m)$;
\item $n=2m+1$, $m\ge2$,$\l_{Q}=(m+1,m)$, $\l_{\psi}=(m,m,1)$;
\item $n=2m+1$, $m\ge2$, $\l_{Q}=(m,m,1)$, $\l_{\psi}=(m+1,m)$;
\item $n=6$, $\l_{Q}=(2,2,2)$, $\l_{\psi}=(4,2)$;
\item $n=6$, $\l_{Q}=(4,2)$, $\l_{\psi}=(2,2,2)$.
\end{enumerate}

\begin{remark} Any tame rigid local system on $\PP^{1}-\{0,1,\infty\}$ is determined by the collection of conjugacy classes of its local monodromies around the punctures. For tame rigid local systems on $\PP^{1}-\{0,1,\infty\}$ of rank $n$ with generic semisimple regular monodromy at one puncture Simpson classifies the possible Jordan types of local monodromies in \cite[Theorem 4]{Si91}. They are in canonical bijection with the above list (up to interchanging $\l_{Q}$ and $\l_{\psi}$) in the sense that the collections $(\l_{Q}, \l_\psi, (1^n))$ exhaust Simpson's list.
\end{remark}

\subsection{Type $B_{n}$, $n\ge2$, \cite[Corollary 1.3.B.]{St03}} \label{orbitB} 
Let $\GG=\Spin(2n+1)$ and let $G$ be the split form of $\GG$ over $F$. The action of $B_{\psi}$ on $G/Q$ has an open orbit with finite stabilizers if we have one of the following.
\begin{enumerate}
\item Any $n$, $(P_{\psi}, Q)\sim (P_{n}, P_{n})$  (Siegel parabolic);
\item $n=2$,  $(P_{\psi}, Q)\sim (P_{1},P_{2})$;
\item $n=2$, $(P_{\psi},Q) \sim (P_{2}, P_{1}) $;
\item $n=3$, $(P_{\psi}, Q) \sim (P_{1},P_{2})$;
\item $n=3$, $(P_{\psi},Q) \sim (P_{2}, P_{1}) $.
\end{enumerate}

\subsection{Type $C_{n}, n\ge3$, \cite[Corollary 1.3.C.]{St03}}  \label{orbitC}Let $\GG=\Sp(2n)$ and let $G$ be the split form of $\GG$ over $F$. The action of $B_{\psi}$ on $G/Q$ has an open orbit with finite stabilizers if we have one of the following.
\begin{enumerate}
\item Any $n$, $(P_{\psi}, Q)\sim (P_{n}, P_{n})$  (Siegel parabolic);
\item $n=3$, $(P_{\psi},Q)\sim (P_{1}, P_{2})$;
\item $n=3$, $(P_{\psi},Q) \sim (P_{2}, P_{1}) $.
\end{enumerate}

\subsection{Type $D_{n}$, \cite[Corollary 1.3.D.]{St03}}  \label{orbitD} Let $\GG=\Spin(2n)$ and let $G$ be the split form of $\GG$ over $F$. Note that there are two conjugacy classes of $n$-dimensional isotropic subspaces (permuted by $O(2n)$) whose stabilizers we simply denote by $P_{n}$ (two conjugacy classes of maximal parabolics of $G$). The action of $B_{\psi}$ on $G/Q$ has an open orbit with finite stabilizers if we have one of the following.
\begin{enumerate}
\item $n=4$, $(P_{\psi}, Q)\sim (P_{4}, P_{1,2})$, or anything in the same orbit under the outer automorphisms of $G$;
\item $n=4$, $(P_{\psi}, Q)\sim (P_{1,2}, P_{4})$, or anything in the same orbit under the outer automorphisms of $G$;
\item $n=5$,  $(P_{\psi}, Q)\sim (P_{5}, P_{3})$;
\item $n=5$,  $(P_{\psi}, Q)\sim (P_{3}, P_{5})$;
\item $n=6$, $(P_{\psi}, Q)\sim (P_{6}, P_{3})$;
\item $n=6$, $(P_{\psi}, Q)\sim (P_{3}, P_{6})$.
\end{enumerate}

\begin{theorem} \label{thm: hypersp quasi-epi} Assume $\psi$ and $Q$ are in any of the above cases. Recall that $\psi\in\Lie T$ so that $T_{\psi}=T$. 
\begin{enumerate}
\item Assume that $\chi:T(k) \rightarrow \Qlbar^\times$ is non-trivial on the connected center of any maximal Levi subgroup of $G$ containing $T$. Then the automorphic datum $(G(\calO_\infty),\psi, \chi, \bQ_0)$ is euphotic. 
\item All cases in type $A$ and $D$ with $\chi$ as in (1) are strict euphotic automorphic data. In type $B$ and $C$ no case is strict. 
\end{enumerate}
\end{theorem}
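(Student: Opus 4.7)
The plan is to verify the three conditions of Definition \ref{d:qep data} (and the two extra strictness conditions of part (2)) by exploiting the simplifications of the hyperspecial case, and then appealing to the case-by-case Hessenberg analysis announced for Section \ref{s: analyze stabs}. First, since $G$ is split and simply-connected, $\Om$ is trivial, so $W_{\bP}\Om W_{\bQ} = W\cdot W_{\bQ} = W$ inside $\tilW$. Consequently condition (1) of Definition \ref{d:qep data} only has to be checked for $w = 1$, in which case $Y_{1} = G/Q$ and the required almost-free openness of the $B_{\psi}$-action is exactly the combination of Stembridge's sphericality condition with the dimension equality \eqref{dim eq}; by construction this is built into the enumeration \S\ref{orbitA}--\S\ref{orbitD}.

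For every $w\in\tilW\setminus W$ I would then show that $Y_{w}$ is spectrally meager as a $G_{\psi}$-scheme. This is the genuine content of the theorem and is the subject of Section \ref{s: analyze stabs}. In the hyperspecial setting one has $V_{\bP}\cong\frg$ as a $G$-module, and $V_{w}^{\bot}$ is cut out explicitly by the affine-root inequality $\langle\bar\alpha, w x_{\bQ}\rangle<1$. The idea is to relate $Y_{w}$ to Springer-type fibers arising from the adjoint representation and to show that when $w$ involves a nontrivial translation in $\xcoch(T)$, the subspace $V_{w}^{\bot}$ fails to contain enough weight directions near $\psi$; this forces $\,^{\ell}\psi$, for any $\ell Q_{w}\in Y_{w}(\ov k)$, to commute with a nontrivial toral direction inside some proper Levi $M\supset T$, which survives in $\Stab_{B_{\psi}}(\ell Q_{w})$. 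One would conclude that $\cS(Y_{w})$ consists of subtori of connected centers $Z(M)^{\circ}$ for various Levi subgroups $M\supset T$ of $G$. Condition (3) then follows immediately from the standing hypothesis that $\chi$ is nontrivial on $Z(M)^{\circ}(k)$ for every maximal Levi $M\supset T$.

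For the strictness claim in part (2), I would enumerate the $B_{\psi}$-orbits on $G/Q$ case-by-case, using the Magyar--Weymann--Zelevinsky classification \cite{MWZ99} in type $A$ and Stembridge's tables in type $D$. For each non-open orbit representative $x$ the identity component $\Stab_{B_{\psi}}(x)^{\circ}$ maps onto the connected center of some Levi $M\supset T$, so the nontriviality of $\cK_{\chi}|_{S_{x}}$ is immediate from the hypothesis on $\chi$. The generic stabilizer on the open $B_{\psi}$-orbit is computed from the dimension equality \eqref{dim eq} together with the explicit combinatorics of the partial flag varieties in these two types, and turns out to coincide with $ZG$.

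The main obstacle is the spectral meagerness step: although (as noted after Definition \ref{d:qep data}) there are only finitely many $L_{\psi}$-equivariant isomorphism types among the $Y_{w}$, one still must handle each of them by inspecting the affine root geometry relative to $\psi$, and this requires the separate treatment in Section \ref{s: analyze stabs}. Once the identification of stabilizer tori with connected centers of Levis is established there, conditions (1) and (3), together with the two strictness conditions in types $A$ and $D$, reduce either to Stembridge's classification or to routine dimension counts on $G/Q$.
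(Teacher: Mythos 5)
Your reductions are correct and match the paper's structure: with $G$ simply connected, $\Om$ is trivial and $W_{\bP}\Om W_{\bQ}=W$, so condition (1) of Definition~\ref{d:qep data} concerns only $w=1$, where $Y_1=G/Q$ and the open almost-free $B_\psi$-orbit is precisely Stembridge's sphericality plus the dimension equality \eqref{dim eq}; condition (3) follows from the hypothesis on $\chi$ once each $S\in\cS(Y_w)$ is shown to lie in the connected center of some proper Levi of $G$ containing $T$ (which then contains the center of a maximal Levi, on which $\chi$ is assumed nontrivial); and part (2) is a question about $B_\psi$-orbits on $G/Q$.

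The genuine gap is that the remaining condition, Definition~\ref{d:qep data}(2) --- spectral meagerness of $Y_w$ for every $w\notin W$ --- is the substance of the theorem and is only pointed to, not proved. Your description of the mechanism (``$^{\ell}\psi$ commutes with a nontrivial toral direction'' via ``Springer-type fibers'') is not an argument; the correct mechanism is Lemma~\ref{lem: large stabs}: one must show that any flag defining a point of $Y_w$ can be refined to a flag that contains a $\psi$-stable (isotropic) subspace $U$, whence $\Stab_{B_\psi}$ at that point contains the connected center of the Levi of the maximal parabolic stabilizing $U$. Establishing this refinement property, for each translate $wx_{\bQ}$ with $0\le\langle\alpha_i,wx_{\bQ}\rangle<1$ for all simple roots $\alpha_i$ and for each pair $(P_\psi,Q)$ in \S\ref{orbitA}--\S\ref{orbitD}, is exactly the case-by-case linear algebra of \S\ref{s: analyze stabs}, and there is no shortcut: the finiteness noted in Remark~\ref{r:sph}(3) only says the problem is finite, not how to solve any instance. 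Likewise, strictness in part (2) is proved in \S\ref{s: unit coset} via a Bruhat reduction to the $B_H$-action on $N^\psi\cap N_Q$ for a Levi $H$, followed by explicit identification of the generic stabilizer with $ZG$ and of non-open stabilizer tori with centers of maximal Levis; Stembridge's theorem classifies which $G/P_\psi\times G/Q$ are spherical and MWZ classify the orbits, but neither supplies these stabilizer computations, which you would need to produce.
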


The proof of Theorem \ref{thm: hypersp quasi-epi} is carried out in the following section.

\begin{remark}\label{rem: expl char} The condition on $\chi$ may be described more explicitly in each type. \\

{\bf Type $A_{n}$.} The character $\chi$ on the diagonal torus $T$ of $\SL(V)$ is given by a collection of characters $\chi_1,\dots, \chi_{n}$ of $k^{\times}$ modulo simultaneous multiplication by the same character of $k^{\times}$. Any maximal Levi subgroup is isomorphic to $S(\GL_a\times \GL_b)$ with $a+b=n$ and $a,b>0$. Its center is the subtorus given by the image of the embedding $\Gm\incl T$, $z\mapsto (z^{b'},\cdots, z^{b'}, z^{-a'},\cdots, z^{-a'})$, where $a'=a/\gcd(a,b), b'=b/\gcd(a,b)$, and $z^{b'}$ appears $a$ times, $z^{-a'}$ appears $b$ times. We therefore require for any non-empty subset $I\subset \{1,\dots, n\}$ of cardinality $a$ and with non-empty complement $J$ of cardinality $b$ that
\[(\prod_{i\in I} \chi_i)^{b/\gcd(a,b)} \neq (\prod_{j\in J} \chi_j)^{a/\gcd(a,b)}.\]

{\bf Types $B_{n},C_{n},D_{n}$.} Identify $T\cong \Gm^{n}$ in the usual way, and write $\chi=(\chi_{1},\cdots, \chi_{n})$. The maximal Levi subgroups are of the form 
\[\GL_a \times G'\]
where $G'$ is a classical group of rank $n-a$ of the same type as $G$ (in the case of type $D_{n}$, $a\ne n-1$). The connected centers of maximal Levi subgroups are the images of maps $\Gm\to \Gm^{n}$,  $z\mapsto (\ph_{1}(z),\cdots, \ph_{n}(z))$ where $\ph_{i}(z)$ is either $1$ or $z$ or $z^{-1}$. Therefore the condition on $\chi$ is that, for  any  disjoint subsets $I\coprod J\subset \{1,2,\cdots, n\}$ such that $I\cup J\ne\vn$, we have
\begin{equation*}
\prod_{i\in I}\chi_{i}\ne \prod_{j\in J}\chi_{j}.
\end{equation*}
Here, when $I$ or $J$ is empty, the corresponding product is $1$. 


\end{remark}

\subsection{Stabilizers on Hessenberg varieties}
Recall that $V_{w}^{\perp}=\bigoplus_{\langle \alpha,wx_{\bQ} \rangle < 1} \frg_\alpha$ which we will denote by $\frg_w$. Then 
\[Y_w=\{gQ_w\in G/Q_w \mid \psi \in \,^g \frg_w \}.\]
By definition these spaces are Hessenberg varieties as defined for example in \cite{DMPS92}. The subvector space $\frg_w$ of $\frg$ is automatically a Hessenberg space, i.e. it is stable under the adjoint action of the parabolic subgroup $Q_w$ and it contains its Lie algebra $\frq_w$. For classical groups, Hessenberg varieties may be described concretely in terms of (isotropic) flags. \par 
Let $V$ be a finite-dimensional vector space over $k$ and let $G$ be $\SL(V), \Spin(V)$ or $\Sp(V)$ where we endow $V$ with a symmetric bilinear (resp. symplectic) form $\langle -,-\rangle$. Then $\psi\in \frg$ is an anti-self adjoint endomorphism of the vector space $V$ and the condition $\psi \in \,^g \frg_w$ may be translated into the condition that if 
\[0\subset F_1 \subset \dots \subset F_r \subset V\]
 is the flag corresponding to $g$ we have $\psi(F_i)\subset F_{h(i)}$ for a non-decreasing function $h$ satisfying $h(i)\ge i$ associated to the space $\frg_w$, cf. \cite[\S 2]{Ty06}. \par
In this section we study the stabilizers of a Borel subgroup $B_\psi$ of $G_\psi$ acting on the Hessenberg variety $Y_w$. The goal is prove that for $w\neq \id$ the Hessenberg variety $Y_w$ is a spectrally meager $G_\psi$-variety.
\begin{lemma} \label{lem: large stabs}
Let $Q \subset G$ be a parabolic subgroup with corresponding Lie algebra $\frq$ and $U\subset \frg$ a subspace containing $\frq$ and which is stable under the adjoint action of $Q$. Assume that there is a parabolic subgroup $P$ of $G$ with Lie algebra $\frp$  such that $Q\subset P$ and $U\subset \frp$. Let $\psi \in U$ be semisimple and let $G_\psi$ be the centralizer of $\psi$ in $G$.  Then the $G_\psi$-variety $Y_\psi(Q,U)=\{g\in G/Q \mid \psi \in \,^g U\}$ is spectrally meager. 
\end{lemma}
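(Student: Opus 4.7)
The plan is to exhibit, for each geometric point $gQ\in Y_\psi(Q,U)(\overline k)$ and each Borel $B_{G_\psi}\subseteq G_\psi$, an explicit nontrivial torus inside $\Stab_{G_\psi}(gQ)\cap B_{G_\psi}$. The candidate comes from a Levi analysis: since the hypothesis $\psi\in{}^gU\subseteq{}^g\frp$ puts the semisimple element $\psi$ into the parabolic subalgebra $\frp':=\mathrm{Lie}(gPg^{-1})$, it lies in some Levi subalgebra $\frl''\subseteq\frp'$. Let $L''$ be the corresponding Levi subgroup of $P':=gPg^{-1}$; since $L''$ is a proper Levi of $G$, its connected center $Z(L'')^\circ$ is a nontrivial torus.

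First I would check that $Z(L'')^\circ\subseteq\Stab_{G_\psi}(gQ)=G_\psi\cap gQg^{-1}$. Membership in $G_\psi$ holds because $Z(L'')^\circ$ fixes $\psi\in\frl''$ via the adjoint action. Membership in $gQg^{-1}$ follows from the semidirect decomposition $gQg^{-1}=(gQg^{-1}\cap L'')\ltimes R_u(P')$ (using $R_u(P)\subseteq Q$, which gives $R_u(P')\subseteq gQg^{-1}$ after conjugating) together with the fact that every parabolic of $L''$ contains the central torus $Z(L'')^\circ$. I would also record the structural observations that $L''_\psi:=L''\cap G_\psi=Z_{G_\psi}(Z(L'')^\circ)$ is a Levi of $G_\psi$, that $P'_\psi:=P'\cap G_\psi$ is the corresponding parabolic, and that $R_u(P'_\psi)=R_u(P')\cap G_\psi\subseteq\Stab_{G_\psi}(gQ)$.

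The main step uses two standard facts about reductive groups. First, any two parabolic subgroups of a reductive group share a maximal torus, so $B_{G_\psi}\cap P'_\psi$ contains some maximal torus $T^*$ of $G_\psi$. Second, all maximal tori of the parabolic $P'_\psi$ are $R_u(P'_\psi)$-conjugate to maximal tori of its Levi $L''_\psi$, so I may write $T^*=rT^{**}r^{-1}$ for some $r\in R_u(P'_\psi)$ and some maximal torus $T^{**}$ of $L''_\psi$. Since $L''_\psi$ and $L''$ have the same rank, $T^{**}$ is also a maximal torus of $L''$ and therefore contains $Z(L'')^\circ$. Conjugating, $rZ(L'')^\circ r^{-1}\subseteq T^*\subseteq B_{G_\psi}$, and because both $r$ and $Z(L'')^\circ$ lie in $\Stab_{G_\psi}(gQ)$, so does the conjugate, yielding the required nontrivial torus.

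The main obstacle I anticipate is the conceptual content of the last step: the mere existence of the torus $Z(L'')^\circ$ inside $\Stab_{G_\psi}(gQ)$ is not sufficient, since $Z(L'')^\circ$ need not lie in an arbitrary Borel of $G_\psi$. The role of the unipotent radical $R_u(P'_\psi)\subseteq\Stab_{G_\psi}(gQ)$ is to provide enough $\Stab$-internal conjugation to reposition $Z(L'')^\circ$ inside any prescribed Borel while keeping it in the stabilizer — this is what ultimately forces the stabilizer to be large enough in the sense demanded by spectrally meager.
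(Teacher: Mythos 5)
Your proposal is correct, and it proves the lemma by the same underlying mechanism as the paper: the torus witnessing spectral meagerness is the connected center of a Levi of $P$ (resp.\ $P'$) containing $\psi$, and the unipotent radical of the parabolic $P_\psi$ (resp.\ $P'_\psi$) of $G_\psi$ is used to relocate that torus into an arbitrary Borel of $G_\psi$ while staying inside the stabilizer. The implementation, however, is genuinely different. The paper first reduces to the identity coset $eQ$, then uses the fibration $G/Q\to G/P$ to produce a torus $Z$ surjecting onto $Z(M)$ inside $H=G_\psi\cap Q$ together with $U_\psi=R_u(P_\psi)$, and finally runs a Bruhat-cell analysis of $G_\psi/B_\psi$ relative to $P_\psi$, passing to the Levi quotient $\bar H$ and using that $\Stab_H\to\Stab_{\bar H}$ is surjective. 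You avoid both the coset reduction and the Bruhat decomposition: working directly at $gQ$ and an arbitrary Borel $B_{G_\psi}$, you invoke two clean structural facts (any two parabolics of a connected reductive group contain a common maximal torus; every maximal torus of a parabolic is $R_u$-conjugate into a fixed Levi) to conjugate $Z(L'')^\circ$ into $B_{G_\psi}$ by an element $r\in R_u(P'_\psi)\subseteq\Stab_{G_\psi}(gQ)$. This yields a shorter and more self-contained argument. Two small remarks, applicable to both proofs: (i) the existence of a Levi $\frl''\subseteq\frp'$ with $\psi\in\frl''$ is the same standard fact the paper uses implicitly when it asserts that $P_\psi$ is a parabolic of $G_\psi$; (ii) nontriviality of $Z(L'')^\circ$ (resp.\ $Z(M)^\circ$) requires $P$ to be a proper parabolic, an implicit hypothesis of the lemma which both proofs take for granted.
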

\begin{proof}Let $B_\psi\subset G_\psi$ be a Borel subgroup. Denote by $e\in G$ the identity element. It suffices to prove that the stabilizer of $eQ$ contains a non-trivial torus. Indeed, if $gQ\in Y_{\psi}(Q,U)$ we have $\psi \in {}^gU\subset {}^g\frp$ and the assumptions of the Lemma are satisfied for ${}^gQ, {}^gU$ and ${}^gP$. Since $\Stab_{B_{\psi}}(gQ)=B_{\psi}\cap {}^gQ= \Stab_{B_{\psi}}(e\cdot {}^gQ)$ we may conclude.  

We will argue in two steps. Let $M$ be the Levi quotient of $P$ and $\frakm$ its Lie algebra. Consider the map 
\[\pi:Y_\psi(Q,U)\rightarrow Y_\psi(P,\frp) \]
induced by the projection $G/Q\rightarrow G/P$ whose fiber above $eP$ is identified with $M/Q_M$ where $Q_M$ is the image of $Q$ in $M$. Since $\psi$ is semisimple and $\psi\in \frp$, the stabilizer $P_\psi$ of $\psi$ in $P$ is a parabolic subgroup of $G_\psi$ and its Levi quotient $P_\psi/U_\psi$ coincides with the stabilizer $M_{\bar{\psi}}$ of the image of $\psi$ in $\frakm$. Write $Y'=\pi^{-1}(eP)\subset  Y_\psi(Q,U)$. The parabolic $P_\psi$ acts on $Y'$ and the action factors through $M_{\bar{\psi}}$. Since $Y'\subset M/Q_M$ we know that the center $Z(M)$ stabilizes every point of $Y'$. Thus $H:=G_\psi\cap Q$ contains a group of the form $Z\cdot U_\psi$ where $Z$ is a torus surjecting onto $Z(M)$. \par
Let $B_\psi\subset G_\psi$ be a Borel subgroup. The second step is to analyze the action of $H$ on $G_\psi/B_\psi$. Using the Bruhat decomposition we write 
\[G_\psi/B_\psi=\coprod_{w\in W_\psi\backslash W} P_\psi wB_\psi/B_\psi \]
where $W_\psi$ is the Weyl group of $M_{\bar{\psi}}$. Since $H$ acts on each cell we only need to consider one such cell $P_\psi wB_\psi /B_\psi$. Note that $P_\psi wB_\psi /B_\psi\cong P_\psi/ (P_\psi \cap \,^w B_\psi)$ and $\Stab_H(pwB_\psi)=\Stab_H(pP_\psi\cap\,^wB_\psi)$. Let $\bar{H}=H/U_\psi$. This acts on $M_{\bar{\psi}}$ and we have that
\[\Stab_H(p(P_\psi\cap\,^wB_\psi))\rightarrow\Stab_{\bar{H}}(\bar{p}) \]
is surjective (for the action of $\bar{H}$ on $M_{\bar{\psi}} / \bar{B}$). Here $\bar{B}$ denotes the image of $P_\psi\cap\,^wB$ in $M_{\bar{\psi}}$. Finally, since $M_{\bar{\psi}} / \bar{B}$ is the full flag variety of $M_{\bar{\psi}}$ every point is stabilized by the center $Z(M)\subset \bar{H}$ of $M$. 
\end{proof}
\begin{cor} Let $Q, U$ and $\psi$ be as in the previous Lemma. Assume that there is a maximal parabolic $P$ with Lie algebra $\frp$ such that $Q'=Q\cap P$ is a parabolic subgroup of $G$ and $\psi\in \frp$. Then $Y_\psi(Q,U)$ is spectrally meager for the $G_\psi$-action. 
\end{cor}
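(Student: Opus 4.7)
The plan is to reduce to Lemma \ref{lem: large stabs} by replacing the pair $(Q, U)$ with the smaller pair $(Q', U')$ where $Q' = Q \cap P$ and $U' = U \cap \mathfrak{p}$, and then transfer the conclusion along the natural projection $\pi \colon G/Q' \twoheadrightarrow G/Q$.

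First I verify that $(Q', U', P)$ satisfies the hypotheses of Lemma \ref{lem: large stabs}. The inclusion $Q' \subset P$ is tautological; $U' \subset \mathfrak{p}$ is clear; the subspace $U'$ is $\Ad(Q')$-stable because $Q' \subset Q$ normalizes $U$ while $Q' \subset P$ normalizes $\mathfrak{p}$; the containment $\mathfrak{q}' \subset U'$ reduces to $\mathfrak{q}' = \mathfrak{q} \cap \mathfrak{p} \subset U \cap \mathfrak{p}$, which follows from $\mathfrak{q} \subset U$; and $\psi \in U \cap \mathfrak{p} = U'$ is semisimple by hypothesis. Lemma \ref{lem: large stabs} therefore yields that $Y_\psi(Q', U')$ is a spectrally meager $G_\psi$-variety.

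Next, I transport this to $Y_\psi(Q, U)$. The projection $\pi$ is $G_\psi$-equivariant and proper with fibers isomorphic to $Q/Q'$; in particular $\Stab_{B_\psi}(\pi(y')) \supseteq \Stab_{B_\psi}(y')$ for every $y' \in G/Q'$. It therefore suffices to show that $\pi$ restricts to a surjection $Y_\psi(Q', U') \twoheadrightarrow Y_\psi(Q, U)$. Unwinding, for each $gQ \in Y_\psi(Q, U)$ one must exhibit $q \in Q$ with $\Ad((gq)^{-1})\psi \in U \cap \mathfrak{p}$; by $Q$-stability of $U$ the condition $\Ad((gq)^{-1})\psi \in U$ is automatic, so the content is to find $q \in Q$ such that $\Ad(g^{-1})\psi \in \Ad(q)\mathfrak{p}$, i.e.\ the semisimple element $\xi := \Ad(g^{-1})\psi$ (which is $G$-conjugate to $\psi \in \mathfrak{p}$) lies in some $Q$-translate of $\mathfrak{p}$.

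The main obstacle is exactly this last step: given a semisimple $\xi$ in the $G$-orbit of $\psi$ and satisfying the constraint $\xi \in U$, showing that $\xi \in \Ad(Q) \cdot \mathfrak{p}$. I expect this to follow from the maximality of $P$ via a dimension/intersection argument inside $G/P$: the subvariety $Y_\xi(P, \mathfrak{p}) \subset G/P$ of parabolics of type $P$ containing $\xi$ is (non-canonically) a disjoint union of partial flag varieties of the Levi $G_\xi$, while the $Q$-orbit $QP/P \subset G/P$ has codimension $\dim(P/Q')$; maximality of $P$ (making $G/P$ of Picard rank one) together with $\xi$ being a conjugate of the specific semisimple element $\psi \in \mathfrak{p}$ should force $Y_\xi(P,\mathfrak{p}) \cap (QP/P) \neq \varnothing$. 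As a fallback, one may cover $Y_\psi(Q,U)$ by the $B_\psi$-saturations of the fibers of $\pi$ over points of $Y_\psi(P,\mathfrak{p})$, and within each such fiber rerun the two-step analysis of Lemma \ref{lem: large stabs} using the $Z(M)$-action on $P/Q'$ to locate the required non-trivial torus.
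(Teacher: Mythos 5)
Your verification that $(Q',U',P)$ satisfies the hypotheses of Lemma \ref{lem: large stabs} is correct and coincides with the paper's. The divergence is in the transport step, and you have put your finger on a real difficulty --- but the resolution you are looking for does not exist. The paper never attempts to surject $Y_\psi(Q',U')$ onto $Y_\psi(Q,U)$. It uses only the much weaker fact that $\pi$ restricts to a surjection $Y_\psi(Q',U')\twoheadrightarrow Y_\psi(Q,U')$ with $U'=U\cap\frp$. That surjection is a tautology: the defining condition $\Ad(g^{-1})\psi\in U'$ reads identically on $G/Q'$ and on $G/Q$, so $gQ\mapsto gQ'$ is a section. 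No dimension, Picard-rank, or Bruhat argument is needed, and none would help.

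The stronger surjectivity you are after is in fact false, and so is the Corollary if one fixes a single $P$ in advance. Take $G=\SL_2$, $Q=P=B$, $U=\sl_2$, and $\psi\in\frt$ regular semisimple: all the hypotheses hold, but $Y_\psi(Q,U)=G/B\cong\PP^1$, whereas $Y_\psi(Q',U')=Y_\psi(B,\frb)$ is the two-point set of $\psi$-stable flags, which obviously cannot surject onto $\PP^1$. A generic point of $\PP^1$ has $T$-stabilizer equal to the center $\mu_2$, which contains no nontrivial torus, so $Y_\psi(Q,U)$ is not even spectrally meager here. This rules out both of your fallback strategies --- they are aimed at a false target.

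What the paper's argument actually establishes, and what is used in \S\ref{s: analyze stabs}, is the pointwise statement: for $gQ\in Y_\psi(Q,U)$, if there is a maximal parabolic $P$ with $Q\cap P$ parabolic and $\Ad(g^{-1})\psi\in\frp$, then $\Stab_{B_\psi}(gQ)\supset\Stab_{B_\psi}(gQ')$ contains a nontrivial torus. In the applications the maximal parabolic is chosen afresh for each flag $F_{\bullet}\in Y_w$ (by refining $F_{\bullet}$ until some step is $\psi$-stable), and the conjugation observation at the start of the proof of Lemma \ref{lem: large stabs} reduces each such point to $g=e$. In other words the quantifier on $P$ must live inside the ``for every geometric point'' of Definition \ref{d:sp meager}, not outside it; your attempt read the Corollary more literally than its proof and its usage support, and the step you flagged as the main obstacle cannot be closed.
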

\begin{proof}
Define $U'=U\cap \frp$. Note that since $\psi\in \frp$, actually $eQ\in Y_\psi(Q,U')$. We have a surjective morphism $Y_\psi(Q',U')\rightarrow Y_\psi(Q,U')$. For any point $y\in Y_\psi(Q',U')$ denote by $\bar{y}$ the image under this morphism. We have that $\Stab_{B_\psi}(y)\subset \Stab_{B_\psi}(\bar{y})$, so it suffices to prove the claim for $eQ'\in Y_\psi(Q',U')$. This immediately follows from the previous Lemma \ref{lem: large stabs}. 
\end{proof}
\begin{remark} In particular for flag varieties of classical groups to prove that $Y_w$ is spectrally meager it suffices to prove that for any (isotropic) flag $F_\bullet\in Y_w$ we can refine it to a flag such that one of the (isotropic) subspaces in the resulting flag is $\psi$-stable. From the proof of Lemma \ref{lem: large stabs} it follows that in this case the stabilizer of any point in $Y_w$ contains the center of the Levi subgroup of the maximal parabolic subgroup stabilizing the $\psi$-stable space. \end{remark}
Denote by $\Phi$ the set of roots of $G$ and let $\Delta=\{\alpha_1,\dots,\alpha_r\}$ be the positive simple roots. By finite Weyl group symmetry we may and will assume that $wx_{\bQ}$ is dominant. 
\begin{cor} Assume that there is a simple root $\alpha_i$ such that $\langle \alpha,wx_{\bQ} \rangle \ge 1$. Then $Y_w$ is spectrally meager. \end{cor}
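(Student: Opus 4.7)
The plan is to apply Lemma~\ref{lem: large stabs} directly by exhibiting a proper parabolic subgroup $P$ of $G$ such that $Q_w \subset P$, $\frg_w \subset \frp$, and $\psi \in \frp$. The natural candidate is $P_i^-$, the opposite of the standard maximal parabolic associated to $\Delta \setminus \{\alpha_i\}$; its root system consists of all positive roots together with the negative roots $-\beta$ (for $\beta > 0$) supported in $\Delta \setminus \{\alpha_i\}$, i.e.\ with $\alpha_i$-coefficient $c_i(\beta) = 0$. Since $\psi \in \frt \subset \frp_i^-$ and $P_i^-$ is a proper maximal parabolic, it remains only to verify the two inclusions $Q_w \subset P_i^-$ and $\frg_w \subset \frp_i^-$; Lemma~\ref{lem: large stabs} then yields spectral meagerness for the $G_\psi$-action on $Y_w = Y_\psi(Q_w,\frg_w)$.

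For both inclusions I would invoke the running assumption that $wx_{\bQ}$ is dominant, so $\langle \alpha_j, wx_{\bQ}\rangle \geq 0$ for every simple root $\alpha_j$. The root system of $Q_w$ consists of roots $\alpha$ with $\langle \alpha, wx_{\bQ}\rangle \leq 0$; this automatically contains all negative roots, while any positive root $\alpha = \sum_j c_j \alpha_j$ with $\langle \alpha, wx_{\bQ}\rangle = 0$ must satisfy $c_j \langle \alpha_j, wx_{\bQ}\rangle = 0$ for each $j$, and combining this with $\langle \alpha_i, wx_{\bQ}\rangle \geq 1$ forces $c_i = 0$. Similarly, $\frg_w$ contains every positive root space by dominance, and a negative root space $\frg_{-\beta}$ (with $\beta > 0$) lies in $\frg_w$ if and only if $\langle \beta, wx_{\bQ}\rangle < 1$; since $\langle \beta, wx_{\bQ}\rangle \geq c_i(\beta)\langle \alpha_i, wx_{\bQ}\rangle \geq c_i(\beta)$, this forces $c_i(\beta) = 0$ and hence $-\beta$ lies in $P_i^-$. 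Both inclusions follow.

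There is no real conceptual obstacle here; the argument is essentially routine once the right parabolic is identified. The only bookkeeping is separating the positive-root and negative-root contributions and using dominance to promote the single inequality $\langle \alpha_i, wx_{\bQ}\rangle \geq 1$ into the required control of arbitrary roots' $\alpha_i$-coefficients. The key conceptual observation is that the hypothesis $\langle \alpha_i, wx_{\bQ}\rangle \geq 1$ is exactly the threshold needed for the defining condition of $\frg_w$ (namely $\langle \alpha, wx_{\bQ}\rangle > -1$) to exclude every negative root whose support meets $\alpha_i$, which is precisely what places $\frg_w$ inside $\frp_i^-$.
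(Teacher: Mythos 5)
Your strategy is the paper's: invoke Lemma \ref{lem: large stabs} with the standard maximal parabolic $P$ attached to the simple root $\alpha_i$, and the key step — using dominance to deduce $\langle \beta,wx_\bQ\rangle \ge c_i(\beta)\langle\alpha_i,wx_\bQ\rangle \ge c_i(\beta)$, so that $\frg_{-\beta}\subset\frg_w$ forces $c_i(\beta)=0$ — is exactly the computation the paper records (stated there in contrapositive form). Two things in the write-up don't hold together, though.

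First, a naming slip: you call the parabolic ``$P_i^-$, the opposite of the standard maximal parabolic,'' but the root system you then describe — all positive roots plus the negatives $-\beta$ with $c_i(\beta)=0$ — is the \emph{standard} $P_i$ containing $B$, not its opposite. That is in fact the parabolic you need, since $\frg_w$ contains all positive root spaces, so $\frp$ must as well. Second, and more seriously, your check of $Q_w\subset P$ refutes itself: you assert that $\Psi(Q_w)=\{\alpha:\langle\alpha,wx_\bQ\rangle\le 0\}$ ``automatically contains all negative roots,'' but then $-\alpha_i\in\Psi(Q_w)$ while $-\alpha_i\notin\Psi(P_i)$, so $Q_w\not\subset P_i$. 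The culprit is the sign on the defining inequality for $Q_w$. Once $wx_\bQ$ is normalized to be dominant, the parabolic stabilizing the relevant flag has $\Psi(Q_w)=\{\alpha:\langle\alpha,wx_\bQ\rangle\ge 0\}$ — it contains all \emph{positive} roots, which is also what makes the Hessenberg inclusion $\frq_w\subset\frg_w$ asserted in the text true. With that sign your positive-root computation for $\Psi(Q_w)$ (forcing $c_i=0$) applies to the negative roots of $Q_w$ and the inclusion $Q_w\subset P_i$ goes through. Alternatively, you can dispense with the separate check entirely: $\frq_w\subset\frg_w\subset\frp_i$ already gives $Q_w\subset P_i$, which is presumably why the paper's one-line proof only verifies $\frg_w\subset\frp$.
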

\begin{proof} Let $P$ be the maximal proper parabolic subgroup of $G$ containing $Q_{w}$ but not the root subgroup corresponding to $\alpha_i$. If $\frg_w \not\subset \frp=\Lie P$, then $\frg_{\alpha_{i}} \subset \frg_{w}$ and $\langle \alpha_{i},wx_{\bQ} \rangle < 1$. Vice versa if $\langle \alpha_{i},wx_{\bQ}\rangle \ge 1$, then $\frg_w \subset \frp$. Therefore Lemma \ref{lem: large stabs} proves the claim.
\end{proof}
This shows that we can restrict ourselves to the study of $Y_w$ for $w$ such that $0\le \langle \alpha_i,wx_{\bQ}\rangle < 1$ for all simple roots $\alpha_i \in \Delta$.

\section{Detailed analysis of stabilizers}\label{s: analyze stabs}

In the following we will carry out a case-by-case analysis of the cases with an almost free open orbit listed in \S \ref{s: hyperspecial}. We will show that any flag in any Hessenberg variety $Y_w$ for $w\neq \id$ may be refined to a flag s.th. one of the spaces appearing in it is $\psi$-stable. This will prove the first part of Theorem \ref{thm: hypersp quasi-epi}. Strictness in types $A$ and $D$ is proved in Section \ref{s: unit coset}. \par

\begin{remark} Let $V$ be a vector space over $k$. In types $B$, $C$ and $D$ we will denote the bilinear form by $\langle-,-\rangle$. We will often make use of the fact that $\frg$ is the Lie algebra of endomorphisms of $V$ which are anti-selfadjoint with respect to the given bilinear form. In particular the non-zero eigenvalues of $\psi$ occur in pairs $x,-x$ and the eigenspaces of $x$ and $-x$ are dual with respect to the given bilinear form. For any non-zero $x$ the eigenspace $V(x)$ is isotropic and orthogonal to all other eigenspaces apart from $V(-x)$. In types $B$,$C$ and $D$ we will always denote the eigenvalues by $\pm x$ (and $\pm y$ in a single case in type $D$). Since $\psi$ is semisimple we can decompose any vector $v$ according to the eigenvalues of $\psi$ and if $x$ is an eigenvalue, $v_x$ is the summand of $v$ in the $x$-eigenspace.
When working with flags $F_\bullet$ of a vector space $V$ we will denote flags and the condition on them imposed by $\psi$ in the following way
\[\begin{tikzcd}[column sep=.7em]
0 \arrow[r,symbol=\subset]  &\dots \arrow[r,symbol=\subset]  &F_k \ar[bend left=50]{r} \arrow[r,symbol=\subset]& F_l \ar[bend left=50]{r} \arrow[r,symbol=\subset]& F_m \arrow[r,symbol=\subset] & \dots  \arrow[r,symbol=\subset] & V
\end{tikzcd}\]
By this we mean that $\psi(F_k)\subset F_l$ and the indices denote the dimensions of the spaces.

\end{remark}

\subsection{Type $A_{n-1}$} Let $V$ be a finite-dimensional $k$-vector space and $\GG=\SL(V)$. We identify $\xcoch(S)\cong \{k=(k_i)\in \ZZ^n \mid \sum_{i=1}^n k_i=0\}$ 
such that for the standard basis $e_1^*, \dots, e_n^*$ of $\ZZ^n$ we have $\alpha^\vee_i=e_{i}^*-e_{i+1}^*$ for the simple roots $\alpha_i$. 
We write $wx_{\bQ}=(x_1,\dots , x_n)$ in coordinates for that basis. Recall that
\[Y_w=\{ gQ_w\in G/Q_w \mid \psi\in \,^g \frg_w \}. \]
The parabolic $Q_w$ is determined by the roots that evaluate non-positively on $wx_{\bQ}$. The space $\frg_{w}$ contains the root space $\frg_{\alpha}$ for any root $\alpha$ if and only if $\langle \alpha, wx_{\bQ} \rangle < 1$. Let $Q’$ be the opposite parabolic to $Q$, so that $-x_{\bQ} = x_{\bQ’}$. To describe the Hessenberg varieties explicitly in terms of flags we will instead work with the variety $Y_{-w}$ defined by the point $-wx_{\bQ}$. This will only switch around the type of $Q$. \par
Similarly we define $\frg_{-w}$ so that $\frg_{\alpha} \subset \frg_{-w}$ if and only if and only if $\langle \alpha, wx_{\bQ} \rangle > -1$. Write $\alpha_{ij}=\sum_{k=i}^{j-1} \alpha_k$ for and $\alpha_{ji}=-\alpha_{ij}$ for $i<j$. We have $\langle \alpha_{ji}, wx_{\bQ} \rangle = x_j-x_i$ and $\frg_{\alpha_{ji}}\subset \frg_{-w}$ if and only if 
\[\langle \alpha_{ji}, wx_{\bQ} \rangle > -1 \]
or in other words if and only if $x_i-x_j < 1$. In terms of $\psi$ and a flag 
\[0\subset F_k \subset \dots \subset F_r \subset V \]
the condition  $\frg_{\alpha_{ji}}\subset \frg_{-w}$ is satisfied if and only if $\psi(F_i)\subset F_j$. We can therefore read off the condition imposed on any flag $F_\bullet \in Y_w$ from the coordinates of $wx_{\bQ}$. The number of different entries in $wx_{\bQ}$ determines the number of different spaces in the flag. The numbers of entries that coincide determine the dimensions of the associated graded spaces. We label the associated graded by the entries of $wx_{\bQ}$. For example if $n=5$ and $wx_{\bQ}=(x_1,x_2,x_2,x_3,x_4)$ we have
\[0\subset_{x_1} F_1 \subset_{x_2} F_3 \subset_{x_3} F_4 \subset_{x_4} V  \]
and $\psi$ is allowed to map $F_1$ to any space whose associated graded is labelled by a number $y$ such that $x_1-y<1$.  \par
The Weyl group acts on $\xcoch(\TT)\otimes \QQ$  by permutation of the coordinates and the coroot lattice by integer translations with vectors  $(k_1,\dots,k_n)$ such that $\sum_{i=1}^n k_i=0$. The arguments in each case work for any parabolic subgroup $Q$ corresponding to the partition $\lambda_Q$. Therefore we only argue for one of these.

\subsubsection{Case (1), \ref{orbitA}}

The barycenter of $\bQ_0$ is $x_{\bQ}=(\frac{n-1}{2n}, -\frac{1}{2n},\dots,-\frac{1}{2n})$. If $wx_{\bQ}\neq x_{\bQ}$ we find that $wx_{\bQ}$ has at least four different entries. Mod $\ZZ$ only one entry may be congruent to $\frac{n-1}{2n}$, all others are congruent to $-\frac{1}{2n}$. Four of them being pairwise different forces two consecutive entries to have a difference of $1$. Therefore the corresponding space in the flag has to be $\psi$-stable.

\subsubsection{Case (2), \ref{orbitA}}

The barycenter is $x_{\bQ}=(\frac{n-1}{2n}, \frac{n-3}{2n}, \dots, \frac{3-n}{2n}, \frac{1-n}{2n})$. Let $wx_{\bQ}=(x_1,\dots, x_n)$ be dominant and not equal to $x_{\bQ}$. Assume that there is an index $i_0$ such that $x_{i_0}-x_{i_0+1}\ge 2/n$. Then we have
\[x_1-x_n = \sum_{i=1}^{n-1}x_i-x_{i+1}\ge (n-2)\frac{1}{n}+\frac{2}{n}=1.\]
Since any flag $F_\bullet\in Y_w$ is a full flag this implies that $\psi(F_1)\subset F_j$ for some $j\le n-1$. Let $v\in F_1\setminus\{0\}$ and write $v=v_x+v_y$ according to the eigenvalues of $\psi$. Assume the $x$-eigenspace $V(x)$ is the one-dimensional eigenspace. If $v_x=0$, $F_1$ is $\psi$-stable and we are done. If $v_x\neq 0$, $V(x)\subset F_1+\psi(F_1)\subset F_j$ and $F_j$ is $\psi$-stable. \par
We may therefore assume that $x_i-x_{i+1}\le 1/n$. Write $x_i=q_i+k_i$ with $q_i\in \{\frac{n-1}{2n}, \frac{n-3}{2n}, \dots, \frac{3-n}{2n}, \frac{1-n}{2n}\}$ and $k_i\in \ZZ$. We have $\sum_{i=1}^n k_i=0$. Since
\[|q_i-q_{i+1}|\le \frac{n-1}{n} \]
we have the following three cases
\begin{align*}
q_i-q_{i+1}=-\frac{n-1}{n}, k_i-k_{i+1}=1, \\
q_i-q_{i+1}=\frac{n-1}{n}, k_i-k_{i+1}=-1, \\
q_i-q_{i+1}=\frac{1}{n}, k_i-k_{i+1}=0. \\
\end{align*}
Now $q_i-q_{i+1}=-\frac{n-1}{n}$ implies that $q_i=\frac{1-n}{2n}$ and $q_{i+1}=\frac{n-1}{2n}$ and vice versa in the second case. In particular, since the $q_i$ are pairwise different, only one of the first two cases may appear and it can only happen for a single index. This implies that the integer vector $(k_1, \dots, k_n)$ has only two distinct entries, but this contradicts the assumption $\sum_{i=1}^n k_i=0$. Therefore the only case in which  $x_i-x_{i+1}\le 1/n$ is the case in which $wx_{\bQ}=x_{\bQ}$.

\subsubsection{Case (3), \ref{orbitA}}
We have $x_{\bQ}=(\frac{m}{2n}, \dots, \frac{m}{2n}, -\frac{m}{2n}, \dots, -\frac{m}{2n})$ with both entries occuring $m$-times. If $wx_{\bQ}\neq x_{\bQ}$ and $wx_{\bQ}=(x_1,\dots, x_n)$ is dominant, then it has at least four different entries. The condition  $0\le \langle \alpha_i, wx_{\bQ}\rangle \le 1/2$ implies that any flag in $Y_w$ contains a part of the form
\[\begin{tikzcd}[column sep=.7em]
\dots \arrow[r,symbol=\subset]  &F_1 \ar[bend left=50]{r} \arrow[r,symbol=\subset]& F_2 \ar[bend left=50]{r} \arrow[r,symbol=\subset]& F_3 \arrow[r,symbol=\subset] & \dots.
\end{tikzcd}\]
Then $W=F_2+\psi(F_2)+\psi^2(F_1)$ is $\psi$-stable. Indeed, let $x,y,z$ be the eigenvalues of $\psi$ with eigenspaces $V(x),V(y),V(z)$ of dimensions $m, m-1,1$ respectively. If every vector $V\in F_1$ is of the form $v=v_x+v_y$, then $F_1+\psi(F_1)$ is $\psi$-stable. Otherwise $W\supset V(z)$, so for any $w=w_x+w_y+w_z\in F_2$ we get $w_x+w_y\in W$ and $xw_x+yw_y\in W$, i.e. $w_x, w_y\in W$, proving the claim. 
\subsubsection{Case (4), \ref{orbitA}}
Here we have 
\[x_{\bQ}=\left(\frac{m+1}{3n}, \dots, \frac{m+1}{3n}, -\frac{m-1}{3n}, \dots, -\frac{m-1}{3n}, - \frac{3m-1}{3n} \right)\]
 with entries occurring $m$-times, $m-1$ times and one time in this order. We may assume that for $x_{\bQ}\neq wx_{\bQ}=(x_1,\dots, x_n)$ we have $0\le x_i-x_{i+1} \le 1/3$, because otherwise we can insert a space of the form $F+\psi(F)$. \par
In addition we know that $wx_{\bQ}$ must have at least five different entries, say $y_1,\dots y_5$ and their classes mod $\ZZ$ need to be ordered as follows
\[\left(\frac{m+1}{3n}, -\frac{m-1}{3n}, -\frac{3m-1}{3n}, \frac{m+1}{3n}, -\frac{m-1}{3n}\right),\]
since $-\frac{3m-1}{3n}$ may only appear once. Write $y_1=\frac{m+1}{3n}+k_1$ and so on. The condition $0\le y_i-y_{i+1} \le 1/3$ implies that $k_1=k_2=k_3$ and $k_4=k_5=k_3-1$. Since all other entries are congruent to $\frac{m+1}{3n}$ or $-\frac{m-1}{3n}$ mod $\ZZ$ we conclude that the integer vector by which we translate has the shape 
\[(k,\dots, k, k-1,\dots, k-1), \]
a contradiction to the sum of these entries vanishing. 
\subsubsection{Case (5) \& (6), \ref{orbitA}}
The arguments in these cases are the same as in the previous two. 

\subsubsection{Case (7), \ref{orbitA}}
The barycenter is $x_{\bQ}=(1/3,1/3,0,0,-1/3,-1/3)$. We may assume that for $wx_{\bQ}=(x_1,\dots,x_6)$ we have $0\le x_i-x_{i+1} \le 1/3$. Otherwise there is an index $i$ such that $x_i-x_{i+2}=1$ and $\psi(F_i)\subset F_{i+1}$. In that case since $\psi$ has only two eigenvalues we may insert the stable space $F_i+\psi(F_i)$. \par
Now assume that all entries of $wx_{\bQ}=(x_1,\dots,x_6)$ are pairwise different. In that case the classes of the entries must be ordered as follows 
\[\left(\frac{2}{3},\frac{1}{3},0,\frac{2}{3},\frac{1}{3},0\right),\]
because otherwise we will find two successive entries whose difference is at least $2/3$. If $x_1=2/3+k_1$ and so on where $k_i\in \ZZ$ then we find that $k_1=k_2=\dots = k_6$. This contradicts the condition $\sum_{i=1}^6 k_i=0$. \par
We are therefore reduced to the following situation. 
 For $wx_{\bQ}\neq x_{\bQ}$ we always have exactly four different proper subspaces and $\psi$ maps as follows 
\[\begin{tikzcd}[column sep=.7em]
\dots \arrow[r,symbol=\subset]  &F_1 \ar[bend left=50]{rr} \arrow[r,symbol=\subset]& F_2 \ar[bend left=50]{rr} \arrow[r,symbol=\subset]& F_3 \arrow[r,symbol=\subset]& F_4 \arrow[r,symbol=\subset] & \dots.
\end{tikzcd}\]
We are left to consider two cases, either $\dim(F_1)=1$ or $\dim(F_1)=2$. \par
In the first case we assume that $F_1$ is not already stable. Choose a non-zero vector $v\in F_1$ and extend it to a basis $v,w$ of $F_2$. Write $v=v_x+v_y$ according to the eigenvalues of $\psi$ (where the $y$-eigenspace $V(y)$ has dimension $2$) and similarly for $w$. The space $F_2+\psi (F_2)$ is spanned by $v_x, w_x, v_y$ and $w_y$. If $v_y$ and $w_y$ are linearly independent, $V(y)\subset F_2+\psi (F_2) \subset F_4$ and hence $F_4$ is stable. If they are linearly dependent, $F_2+\psi(F_1)$ is $\psi$-stable and we may insert that. \par
In the second case choose a basis $v,w$ of $F_2$. Then with the same notation as before $F_2+\psi F_2$ contains $v_x, w_x, v_y$ and $w_y$. If $v_y$ and $w_y$ are linearly dependent $F_2$ contains a non-zero eigenvector and we may insert the line spanned by it. If not, $F_2+\psi(F_2)$ contains the whole $y$-eigenspace $V(y)$ and we are done. 

\subsubsection{Case (8), \ref{orbitA}}
The barycenter is $x_{\bQ}=(1/6, 1/6, 1/6, 1/6, -1/3, -1/3)$. For $wx_{\bQ}\neq x_{\bQ}$ we will find at least four different entries $x_1,\dots, x_4$. We may assume as before that $0 \le x_i-x_{i+1} \le 1/2$, because otherwise a space in the flag will already be $\psi$-stable. Therefore $wx_{\bQ}$ will have exactly four different entries, say $y_1, y_2, y_3, y_4$ and the classes mod $\ZZ$ will be ordered either as $(-1/3, 1/6, -1/3, 1/6)$ or $(1/6, -1/3, 1/6, -1/3)$. The associated graded spaces that are labelled by $-1/3$ have to be one-dimensional and cannot occur consecutively. Therefore we get the following list of possible flags that we need to consider:
\begin{align*}
0\subset F_1\subset F_2\subset F_3 \subset V, \\ 
0 \subset F_2\subset F_3 \subset F_5 \subset V, \\
0 \subset F_3\subset F_4\subset F_5 \subset V, \\
0 \subset F_1\subset F_4 \subset F_5 \subset V, \\
0 \subset F_1\subset F_3 \subset F_4 \subset V. \\
\end{align*} 
The indices indicate the dimensions of the spaces and the endomorphism $\psi$ always maps a space to the next one. In the first three cases we may insert the spaces $F_1+\psi(F_1)+\psi^2(F_1)$, $F_2+\psi(F_2)+\psi^2(F_2)$ and $F_3+\psi(F_3)+\psi^2(F_3)$ respectively. \par
The last two cases are similar to each other. We will present the argument only for the fourth case. First note that we may assume that $F_4+\psi(F_4)=F_5$ since otherwise $F_4$ is already $\psi$-stable. Thus $F_1+\psi(F_1)+\psi^2(F_1)\subset F_4+\psi(F_4)$ and for $v=v_x+v_y+v_z\in F_1\setminus\{0\}$ we get $v_x, v_y, v_z\in F_4+\psi(F_4)$. We may assume that they are all non-zero because otherwise $F_1+\psi(F_1)$ is $\psi$-stable. Since $F_4\cap \psi(F_4)\neq 0$ there is a vector $w\in F_4$ such that $\psi(w)\in F_4$. Thus $F_4+\psi(F_4)$ contains $w_x,w_y,w_z$. If any pair $(w_x,v_x)$, $(w_y, v_y)$ or $(w_z, v_z)$ is linearly independent, the corresponding eigenspace lies in $F_4+\psi(F_4)$ and this would imply that $F_4+\psi(F_4)$ is $\psi$-stable. \par
We therefore assume that the above pairs are all linearly dependent. Since $w\neq 0$ we may additionally assume that $w_x\neq 0$ and hence we can write $v_x=\lambda w_x$ for some non-zero $\lambda$. Since $F_4\supset F_1+\psi(F_1)$ it contains $(x-y)v_x+(z-y)v_z$ and also $(x-y)w_x+(z-y)w_z$. In particular $\lambda w_z-v_z\in F_4$ and therefore if $ \lambda w_z-v_z\neq 0$ we get $v_z\in F_4$ and $F_1+\psi(F_1)+\psi^2(F_1)\in F_4$. If $ \lambda w_z-v_z=0$ we can do the same for $v_y$ and find that actually $v=\lambda w$. Since $w\in F_4\cap \psi(F_4)$ is arbitrary this implies $\dim(F_4\cap \psi(F_4))=1$ and hence $\ker(\psi)\cap F_4\neq 0$, i.e. say $z=0$ and $F_4$ contains a $0$-eigenvector $u_0$. Again if $u_0$ and $v_0$ are linearly independent, then $V(0)\subset F_4+\psi(F_4)$ and if they are dependent, then $F_1+\psi(F_1)+\psi^2(F_1)\subset F_4$, so we are done. 

\subsection{Type $B_n$} 
 Let $V$ be a finite dimensional $k$-vector space of odd dimension equipped with a non-degenerate symmetric bilinear form $\langle -,-\rangle$ and $\GG=\Spin(V)$. As in type $A$ we use the evident identification $\xch(\TT)\cong \ZZ^n$ to write $wx_{\bQ}\in \xcoch(\TT)$ in coordinates. A (partial) flag in type $B$ stabilized by a parabolic $Q_w$ can be thought of as a flag of isotropic spaces together with their complements
\[0\subset F_1 \subset \dots \subset F_{n-1} \subset F_n \subset F_{n-1}^\perp \subset F_1^\perp\subset V. \]
The condition that $\psi \in \frg_w$ imposes the same conditions as in type $A$ for the flag
\[0\subset F_1 \subset \dots \subset F_{n-1} \subset F_n.\]
In addition, mapping from a space $F_i$ to $F_j^\perp$ is determined by the value of the root $e_i+e_j$ (or $e_i$ for $j=i$) and hence for $wx_{\bQ}=(x_1,\dots,x_n)$ we may label the flag above as follows
\[0\subset_{x_1} F_1 \subset \dots \subset_{x_{n}} F_{n} \subset_{0} F_n^\perp \subset_{-x_n} F_{n-1}^\perp \subset F_1^\perp\subset_{-x_1} V. \]
Again $\psi$ is allowed to map $F_i$ to any space whose associated graded is labelled by $y$ such that $x_i-y<1$. \par
The finite Weyl group acts on $\xcoch(\TT)\otimes \QQ$ by arbitrary permutations and sign changes of the coordinates. We may translate using the coroot lattice, i.e. by integer vectors whose coordinates sum to an even number. We can therefore produce a finite list of possible cases for $wx_{\bQ}$ (using the condition that $0\le \langle \alpha_i,wx_{\bQ}\rangle < 1$ for all simple roots).
\subsubsection{Case (1), \ref{orbitB}, $(P_\psi,Q)\sim (P_n,P_n)$} In this case the barycenter is $x_{\bQ}=(1/2,\dots , 1/2)$. Therefore $wx_{\bQ}$ will have coordinates in $1/2+\ZZ$. The condition $0\le \langle \alpha_i,wx_{\bQ}\rangle < 1$ immediately implies that all coordinates of $wx_{\bQ}$ have to be equal to $1/2$, i.e. $wx_{\bQ}=x_{\bQ}$ is the only possibility and there is nothing to prove. 

\subsubsection{Case (2), \ref{orbitB}}
This case is similar to Case (1). Since $x_{\bQ}=(1/2,1/2)$, there is no other possibility than $wx_{\bQ}=x_{\bQ}$. 

\subsubsection{Case (3), \ref{orbitB}}
We have $x_{\bQ}=(1/2,0)$ and the only non-trivial possibility for $wx_{\bQ}$ is $wx_{\bQ}=(1,1/2)$. In this case the flag is
\[\begin{tikzcd}[column sep=.7em]
0 \arrow[r,symbol=\subset]  &F_1 \ar[bend left=70]{r} \arrow[r,symbol=\subset]& F_2 \ar[bend left=70]{r} \arrow[r,symbol=\subset]& F_2^\perp \ar[bend left=70]{r} \arrow[r,symbol=\subset]& F_1^\perp \arrow[r,symbol=\subset] & V.
\end{tikzcd}\]
and we may insert $F_1+\psi(F_1)$.

\subsubsection{Case (4), \ref{orbitB}} We have $x_{\bQ}=(1/2,1/2,0)$. The possible non-trivial cases for $wx_{\bQ}$ are
\vspace{1mm}
\begin{enumerate}[nosep, label=(\roman*)] 
\item $wx_{\bQ}=(1,1/2,1/2)$,
\item $wx_{\bQ}=(2,3/2,1/2)$.
\end{enumerate}
In case $(i)$ the corresponding flag is
\[\begin{tikzcd}[column sep=.7em]
0 \arrow[r,symbol=\subset]  &F_1 \ar[bend left=70]{r} \arrow[r,symbol=\subset]& F_3 \ar[bend left=70]{r} \arrow[r,symbol=\subset]& F_3^\perp \ar[bend left=70]{r} \arrow[r,symbol=\subset]& F_1^\perp \arrow[r,symbol=\subset] & V.
\end{tikzcd}\]
Choose a non-zero vector $v\in F_1$. Denote the non-zero eigenvalues of $\psi$ by $x$ and $-x$ and write $v=v_x+v_0+v_{-x}$ according to the eigenspace decomposition of $V$. We get that 
\[\langle v, \psi^2 v\rangle =0 \]
and from that it follows that $\langle v_x,v_{-x}\rangle =0$. Since $P_\psi\sim P_1$, $\dim V(x)=\dim V(-x)=1$ and either $v_x=0$ or $v_{-x}=0$. Assume that $v_{-x}=0$. In that case $W=F_1+\psi F_1$ contains $v=v_0+v_x$ and $v_x$. This space therefore has a basis of eigenvectors and is $\psi$-stable. Since $W\subset F_3$ it is automatically isotropic and we may insert this space into the given flag.
 \\
In case $(ii)$ the flag is 
\[\begin{tikzcd}[column sep=.7em]
0 \arrow[r,symbol=\subset]  &F_1 \ar[bend left=70]{r} \arrow[r,symbol=\subset]& F_2 \ar[bend left=70]{r} \arrow[r,symbol=\subset]& F_3 \ar[bend left=70]{r} \arrow[r,symbol=\subset]&F_3^\perp \ar[bend left=70]{r} \arrow[r,symbol=\subset]&  F_2^\perp \ar[bend left=70]{r} \arrow[r,symbol=\subset]& F_1^\perp \arrow[r,symbol=\subset] & V.
\end{tikzcd}\]
Because $P_\psi\sim P_1$, $\psi$ has three eigenvalues and if neither $F_1$ nor $F_2$ are $\psi$-stable, then $F_3=F_1+\psi F_1 +\psi^2F_1$ which is stable. 
\subsubsection{Case (5), \ref{orbitB}}
We have $x_{\bQ}=(1/2,0,0)$. The possible non-trivial cases for $wx_{\bQ}$ are
\vspace{1mm}
\begin{enumerate}[nosep, label=(\roman*)] 
\item $wx_{\bQ}=(1,1/2,0)$,
\item $wx_{\bQ}=(1,1,1/2)$.
\end{enumerate}
In case $(i)$ the flag is
\[\begin{tikzcd}[column sep=.7em]
0 \arrow[r,symbol=\subset]  &F_1 \ar[bend left=70]{r} \arrow[r,symbol=\subset]& F_2 \ar[bend left=70]{r} \arrow[r,symbol=\subset]& F_2^\perp \ar[bend left=70]{r} \arrow[r,symbol=\subset]& F_1^\perp \arrow[r,symbol=\subset] & V.
\end{tikzcd}\]
Let $v\in F_1$ be non-zero and write $v=v_x+v_0+v_{-x}$ according to the eigenspace decomposition of $V$. Since $\psi(v)\in F_2$, it's isotropic. We have $\psi(v)=xv_x-xv_{-x}$ and 
\[0=\langle \psi(v),\psi(v) \rangle=-2x\langle v_x,v_{-x} \rangle.\]
Therefore $\langle v_x,v_{-x} \rangle=0$. The space $W=F_1+\psi F_1+\psi^2 F_1$ is $\psi$-stable and we claim it is isotropic. Since $\psi(v)\in F_2$ and $\psi^2(v)\in F_2^\perp$ it is enough to prove that $\langle \psi^2(w), \psi^2(w')\rangle=0$ for any $w, w'\in F_1$. But $F_1$ is a line, so $w=\lambda v$ and $w'=\mu v$ for some $\lambda, \mu\in k$. Therefore 
\[\langle \psi^2(w), \psi^2(w')\rangle=\lambda \mu \langle x^2v_x+x^2v_{-x}, x^2v_x+x^2v_{-x} \rangle =2x^4\langle v_x, v_{-x} \rangle =0 \]
and we can insert $W$ into the above flag.  \\
In case $(ii)$ the flag is 
\[\begin{tikzcd}[column sep=.7em]
0 \arrow[r,symbol=\subset]  &F_2 \ar[bend left=70]{r} \arrow[r,symbol=\subset]& F_3 \ar[bend left=70]{r} \arrow[r,symbol=\subset]& F_3^\perp \ar[bend left=70]{r} \arrow[r,symbol=\subset]& F_2^\perp \arrow[r,symbol=\subset] & V.
\end{tikzcd}\]
For $v,w\in F_2$, since $\psi^2F_2\subset F_3^\perp\subset F_2^\perp$ we have 
\[0=\langle v, \psi^2(w)\rangle=x^2(\langle v_x,w_{-x} \rangle +\langle v_{-x},w_x\rangle). \]
The space $W=F_2+\psi F_2 + \psi^2 F_2$ is $\psi$-stable. We claim it is isotropic. For this it is enough to show that $\langle \psi^2(v), \psi^2(w) \rangle =0$ for any $v,w\in F_2$. But 
\[\langle \psi^2(v), \psi^2(w) \rangle = \langle x^2(v_x+v_{-x}), x^2(w_x+w_{-x}) \rangle = x^4(\langle v_x,w_{-x} \rangle +\langle v_{-x},w_x\rangle)\]
which vanishes by the previous calculation. Therefore $W$ is isotropic and since $F_3\subset W$ they have to be equal.

\subsection{Type $C_n$}
Let $V$ be a finite dimensional $k$-vector space of even dimension equipped with a symplectic form $\langle -,-\rangle$ and $\GG=\Sp(V)$. As before we may write $wx_{\bQ}$ in coordinates using the identification $\xch(\TT)\cong \ZZ^n$. Similar to type $B$ we consider flags
\[0\subset F_1 \subset \dots \subset F_{n-1} \subset F_n \subset F_{n-1}^\perp \subset F_1^\perp\subset V \]
and label them the same way, but without the middle step. The endomorphism $\psi$ is allowed to map $F_i$ to any space whose associated graded is labelled by $y$ such that $x_i-y<1$. \par
The finite Weyl group acts by arbitrary permutations and sign changes on the coordinates of $wx_{\bQ}$ and the coroot lattice by arbitrary integer vector translations.

\subsubsection{Case (1), \ref{orbitC}} The barycenter is $x_{\bQ}=(1/4,\dots, 1/4)$. Therefore any coordinate of $wx_{\bQ}=(x_1,\dots,x_n)$ lies in $1/4+\ZZ$ or $3/4+\ZZ$. Therefore $x_i-x_j=0$ or $x_i-x_j=1/2$ for any $i \le j$. By the condition $0\le \langle \alpha_n,wx_{\bQ} \rangle < 1$ we conclude that $x_n=1/4$. Therefore if all coordinates agree, $wx_{\bQ}=x_{\bQ}$. For flags of the form
\[0\subset F \subset F' \subset (F')^\perp \subset F^\perp \subset V\]
we find that $\psi(F)\subset F'$ since the difference of any two distinct coordinates is $1/2$. Since $P_\psi\sim P_n$, the space $W=F+\psi(F)$ is stable and can be inserted into the flag. We can use this argument whenever we have at least three distinct coordinates. In the case that we only have two distinct coordinates the flag is of the form
\[0\subset F\subset F^\perp \subset V\]
with $\psi(F)\subset F^\perp$ and we can insert the space $W=F+\psi(F)$. Let $v+\psi(v'), w+\psi(w')\in W$. Then
\[\langle v+\psi(v'), w+\psi(w') \rangle=\langle v,w \rangle + \langle v,\psi(w') \rangle + \langle \psi(v'),w\rangle +\langle \psi(v'),\psi(w') \rangle= \langle \psi(v'),\psi(w') \rangle\]
since $F$ is isotropic and $\psi(F)\subset F^\perp$. Since $\psi$ is anti-self adjoint, $\langle \psi(v'),\psi(w') \rangle=-\langle v',\psi^2(w') \rangle$. Now since $P_\psi\sim P_n$ is the Siegel parabolic, $\psi$ has eigenvalues say $x$ and $-x$ and it follows that $\psi^2$ is scalar multiplication by $x^2$. Therefore 
\[-\langle v',\psi^2(w') \rangle=-x^2\langle v',w' \rangle=0 \]
and we find that $W$ is isotropic.

\subsubsection{Case (2), \ref{orbitC}}
We have $x_{\bQ}=(1/3,1/3,0)$. The possible non-trivial cases for $wx_{\bQ}$ are
\vspace{1mm}
\begin{enumerate}[nosep, label=(\roman*)] 
\item $wx_{\bQ}=(2,1/3,0)$,
\item $wx_{\bQ}=(2/3,2/3,0)$,
\item $wx_{\bQ}=(1,2/3,1/3)$,
\item $wx_{\bQ}=(4/3,2/3,0)$,
\item $wx_{\bQ}=(1,1/3,1/3)$,
\item $wx_{\bQ}=(4/3,1,1/3)$.
\end{enumerate}
In case $(i)$ we have flags
\[\begin{tikzcd}[column sep=.7em]
0 \arrow[r,symbol=\subset]  &F_1 \ar[bend left=50]{rr}  \arrow[r,symbol=\subset]& F_2 \ar[bend left=50]{rr} \arrow[r,symbol=\subset]&  F_2^\perp  \arrow[r,symbol=\subset] &F_1^\perp  \arrow[r,symbol=\subset]& V.
\end{tikzcd}\]
We claim that $W=F_2+\psi(F_1)$ is isotropic  and that if neither $F_1$ nor $F_2$ are $\psi$-stable then $F_2+\psi(F_1)$ is $\psi$-stable. Because $\psi(F_1)\subset F_2$ it suffices to prove that $\langle \psi v, \psi v' \rangle =0 $ for any $v,v'\in F_1$. This is clear since $F_1$ is one-dimensional. \par
To prove $W$ is stable, let $v\in F_1$ be non-zero. We have
\[0=\langle v, \psi v\rangle = -2x\langle v_x, v_{-x} \rangle,\]
i.e. $v_x=0$ or $v_{-x}=0$. Assume we have $v_{-x}=0$ and let $w=w_x+w_0+w_{-x}=F_2$ be any vector. Then 
\[0=\langle v, \psi w \rangle =-x \langle v_x, w_{-x} \rangle. \]
If $v_x=0$, then $F_1$ is $\psi$-stable, so we get that $w_{-x}=0$ and $F_2+\psi(F_1)$ contains $v_x,v_0$ and $w_0$. If $v_0$ and $w_0$ are linearly dependent, then $V(x)\subset F_2$ and by what we said above no vector in $F_2$ has a component in the $(-x)$-eigenspace. This implies that $F_2$ is $\psi$-stable. We may therefore assume that $v_0$ and $w_0$ are linearly independent and hence $F_2+\psi(F_1)$ is spanned by eigenvectors. \par
In case $(ii)$ the flags are of the form 
\[\begin{tikzcd}[column sep=.7em]
0 \arrow[r,symbol=\subset]  &F_2 \ar[bend left=70]{r} \arrow[r,symbol=\subset]& F_2^\perp \arrow[r,symbol=\subset] & V.
\end{tikzcd}\]
Choose a basis $v,w$ for $F_2$.  Since $\dim V(x)=1$ there are scalars $\lambda, \mu$ such that $\lambda v_x+\mu w_x=0$ and not both of them are zero. Let $u=\lambda v+\mu w$. Since $v,w$ are a basis, $u\neq 0$. If $u_{-x}=0$ then $u_0\neq 0$ and hence $F_2$ contains the eigenline spanned by $u_0$. If $u_{-x}\neq 0$ then for any $u'\in F_2$ we have
\[0=\langle u', \psi u \rangle = \langle u_x',u_{-x} \rangle, \]
i.e. $u_x'=0$. In this case $F_2+\psi(F_2)$ is $\psi$-stable and isotropic. \par
In case $(iii)$ we have flags
\[\begin{tikzcd}[column sep=.7em]
0 \arrow[r,symbol=\subset]  &F_1 \arrow[r,symbol=\subset]&F_2 \ar[bend left=70]{r} \arrow[r,symbol=\subset]&F_3 \ar[bend left=70]{r} \arrow[r,symbol=\subset]&F_2^\perp  \arrow[r,symbol=\subset]& F_1^\perp \arrow[r,symbol=\subset] & V.
\end{tikzcd}\]
We claim that $W=F_2+\psi(F_2)+\psi^2(F_2)$ is isotropic. To prove that it suffices to show that $\langle \psi v, \psi^2w \rangle=0$ and that $\langle \psi^2 v, \psi^2 w\rangle =0$ for any $v,w \in F_2$. We have
\[\langle \psi v, \psi^2 w\rangle = \langle \psi v, \psi^2 w\rangle +\langle \psi v, x^2w_0\rangle =x^2 \langle \psi v, w\rangle =0. \]
The same argument works for $\langle \psi^2 v, \psi^2 w\rangle =0$ and thus $W$ is isotropic. If $F_2$ is not $\psi$-stable, then $F_3=F_2+\psi(F_2)$ and $F_3\subset W$. Hence $W=F_3$. \par
In case $(iv)$ the flags are
\[\begin{tikzcd}[column sep=.7em]
0 \arrow[r,symbol=\subset]  &F_1 \ar[bend left=70]{r}  \arrow[r,symbol=\subset]& F_2 \ar[bend left=70]{r} \arrow[r,symbol=\subset]&  F_2^\perp  \arrow[r,symbol=\subset] \ar[bend left=70]{r} &F_1^\perp  \arrow[r,symbol=\subset]& V
\end{tikzcd}\]
and we may take $F_1+\psi(F_1)+\psi^2(F_1)$ which is isotropic with the same argument as in case (iii). \par
In case $(v)$ consider
\[\begin{tikzcd}[column sep=.7em]
0 \arrow[r,symbol=\subset]  &F_1 \ar[bend left=70]{r}  \arrow[r,symbol=\subset]& F_3 \ar[bend left=70]{r} \arrow[r,symbol=\subset] &F_1^\perp  \arrow[r,symbol=\subset]& V.
\end{tikzcd}\]
For $v\in F_1$ non-zero we have $v_x=0$ or $v_{-x}=0$ and hence $F_1+\psi(F_1)$ is $\psi$-stable. \par
For case $(vi)$ we get full flags
\[\begin{tikzcd}[column sep=.7em]
0 \arrow[r,symbol=\subset]    &F_1 \ar[bend left=70]{r}\arrow[r,symbol=\subset]&F_2 \ar[bend left=70]{r} \arrow[r,symbol=\subset]&F_3 \ar[bend left=70]{r} \arrow[r,symbol=\subset]&F_2^\perp \ar[bend left=70]{r} \arrow[r,symbol=\subset]& F_1^\perp \arrow[r,symbol=\subset] & V
\end{tikzcd}\]
and we may simply insert $F_1+\psi(F_1)+\psi^2(F_1)$.

\subsubsection{Case (3), \ref{orbitC}}
We have $x_{\bQ}=(1/3,0,0)$. The possible non-trivial cases for $wx_{\bQ}$ are
\vspace{1mm}
\begin{enumerate}[nosep, label=(\roman*)] 
\item $wx_{\bQ}=(2/3,0,0)$,
\item $wx_{\bQ}=(1,1/3,0)$,
\item $wx_{\bQ}=(1,2/3,0)$,
\item $wx_{\bQ}=(1,1,1/3)$.
\end{enumerate}
In case (i) the flags are of the form 
\[\begin{tikzcd}[column sep=.7em]
0 \arrow[r,symbol=\subset]  &F_1 \ar[bend left=70]{r} \arrow[r,symbol=\subset]& F_1^\perp \arrow[r,symbol=\subset] & V.
\end{tikzcd}\]
Let $v\in F_1$ be non-zero. Then $\langle v, \psi^2 v\rangle=-\langle v,v\rangle =0$, hence $W=F_1+\psi(F_1)+\psi^2(F_1)\subset F_1^\perp$. Since also $\langle v, \psi v\rangle = 0$ we find that $\langle v_x,v_{-x} \rangle = 0$. This implies that $W$ is isotropic.  \par
In case $(ii)$ we have flags
\[\begin{tikzcd}[column sep=.7em]
0 \arrow[r,symbol=\subset]  &F_1 \ar[bend left=70]{r} \arrow[r,symbol=\subset]& F_2 \ar[bend left=40]{rr} \arrow[r,symbol=\subset]& F_2^\perp \arrow[r,symbol=\subset]& F_1^\perp \arrow[r,symbol=\subset] & V.
\end{tikzcd}\]
If $F_1$ is not stable, then $F_2=F_1+\psi(F_1)$. Let $F_1$ be spanned by $v=v_x+v_0+v_{-x}$. As before we find that $\langle v_x, v_{-x}\rangle=0$ and hence $\langle \psi v, \psi^2 v\rangle = 0$. Thus $\psi(F_2)\subset F_2^\perp$ and we may insert $F_1+\psi(F_1)+\psi^2(F_1)$. \par
In case $(iii)$ we consider 
\[\begin{tikzcd}[column sep=.7em]
0 \arrow[r,symbol=\subset]  &F_1  \arrow[r,symbol=\subset]\ar[bend left=70]{r}& F_2 \ar[bend left=70]{r} \arrow[r,symbol=\subset]& F_2^\perp \ar[bend left=70]{r} \arrow[r,symbol=\subset]& F_1^\perp \arrow[r,symbol=\subset] & V.
\end{tikzcd}\]
This works the same as case (ii). \par
In case $(iv)$ we have
\[\begin{tikzcd}[column sep=.7em]
0 \arrow[r,symbol=\subset]  &F_2 \ar[bend left=70]{r}  \arrow[r,symbol=\subset]& F_3 \ar[bend left=70]{r} \arrow[r,symbol=\subset]& F_2^\perp  \arrow[r,symbol=\subset]& V.
\end{tikzcd}\]
We have $F_2\cap \psi(F_2)\neq 0$ and there is a $v\in F_2$ such that $\psi(v)\in F_2$. Therefore $F_2$ is spanned by two vectors of the form $w_x+w_0$ and $u_{-x}+u_0$ and $F_2+\psi(F_2)$ is $\psi$-stable.

\subsection{Type $D_n$}
Let $V$ be a finite dimensional $k$-vector space of even dimension equipped with a non-degenerate symmetric bilinear form $\langle -,-\rangle$ and $\GG=\Spin(V)$. Using the identification $\xch(\TT)\cong \ZZ^n$ we write $wx_{\bQ}$ in coordinates. A full flag in type $D_n$ is the data of a flag of isotropic spaces
\[\begin{tikzcd}[column sep=.7em, row sep=.7em]  & & & & & F_n \arrow[dr,symbol=\subset]  & & & & & \\
0 \arrow[r,symbol=\subset]  &F_1  \arrow[r,symbol=\subset]& F_2 \arrow[r,symbol=\subset]& \dots \arrow[r,symbol=\subset]& F_{n-1}\arrow[ur,symbol=\subset] \arrow[dr,symbol=\subset] & &  F_{n-1}^\perp \arrow[r,symbol=\subset]  & \dots  \arrow[r,symbol=\subset]  & F_2^\perp  \arrow[r,symbol=\subset]  & F_1^\perp  \arrow[r,symbol=\subset] & V. \\
& & & & & F_n' \arrow[ur,symbol=\subset] & & & & &
\end{tikzcd}\]
with two Lagrangian subspaces $F_n$ and $F_n'$ such that $F_n\cap F_n'=F_{n-1}$. The labelling is done as before. We denote the non-zero eigenvalues of $\psi$ by $\pm x$ (and $\pm y$ in the case $n=4$). The finite Weyl group acts by arbitrary permutations and even sign changes on the coordinates of $wx_{\bQ}$. The coroot lattice acts by translation with integer vectors whose coordinates sum to an even integer.  \par

\subsubsection{Case (1), \ref{orbitD}} \label{FlagsD12}
We have $x_{\bQ}=(1/2,1/4,0,0)$. Because $\psi$ has only two eigenvalues we may eliminate cases in the orbit of $x_{\bQ}$ for which $\psi$ maps a space to the next one. This leaves us with the following possibilities.
\begin{enumerate}[nosep, label=(\roman*)] 
\item $wx_{\bQ}=(3/4,1/2,0,0)$,
\item $wx_{\bQ}=(1,1/2,1/4,0)$,
\item $wx_{\bQ}=(1,3/4,1/2,0)$,
\item $wx_{\bQ}=(1,1,1/2,1/4)$. 
\end{enumerate}
In case $(i)$ the flags are of the form 
\[\begin{tikzcd}[column sep=.7em, row sep=.7em]  
0 \arrow[r,symbol=\subset]  &F_1   \arrow[r,symbol=\subset]& F_2 \ar[bend left=70]{r}\arrow[r,symbol=\subset]& F_2^\perp \arrow[r,symbol=\subset]& F_1^\perp  \arrow[r,symbol=\subset] & V. 
\end{tikzcd}\]
It's enough to show that $F_2+\psi(F_2)$ is isotropic. For that it suffices to prove that $\langle \psi v, \psi w\rangle=0$ for all $v,w\in F_2$. This follows from the fact that $\psi^2(w)=x^2 w$. Indeed we have
\[\langle \psi v, \psi w\rangle=-\langle  v, \psi^2 w\rangle = -x^2 \langle  v, w\rangle =0. \]
The same argument works in cases $(iii)$ and $(iv)$. \par
In case $(ii)$ we have flags
\[\begin{tikzcd}[column sep=.7em, row sep=.7em]  
& & & & F_4 \arrow[dr,symbol=\subset] &  & & & \\
0 \arrow[r,symbol=\subset]& F_1  \ar[bend left=50]{rr} \arrow[r,symbol=\subset] &F_2  \ar[bend left=90]{rrrr} \arrow[r,symbol=\subset]&F_3  \arrow[ur,symbol=\subset]\arrow[dr,symbol=\subset] & &F_3^\perp  \ar[bend left=50]{rr}  \arrow[r,symbol=\subset]& F_2^\perp  \arrow[r,symbol=\subset] &F_1^\perp   \arrow[r,symbol=\subset]&  V. \\\
& & & & F_4' \arrow[ur,symbol=\subset] &  & & & \\
\end{tikzcd}\]
Assume that $F_2+\psi(F_1)\neq F_3$. Then $\psi(F_1)\subset F_2$ and we may take $F_1+\psi(F_1)$. If $F_2+\psi(F_1)=F_3$, then $F_3\subset F_2+\psi(F_2)$ and this space is isotropic by the same argument as before. Therefore this space is a Lagrangian subspace containing $F_3$ and as such it is either $F_4$ or $F_4'$ and we are done.  \par
%

\subsubsection{Case (2), \ref{orbitD}}
We have $x_{\bQ}=(1/4,1/4,1/4,1/4)$ and the possible non-trivial cases are
\begin{enumerate}[nosep, label=(\roman*)] 
\item $wx_{\bQ}=(3/4,3/4,1/4,1/4)$,
\item $wx_{\bQ}=(5/4, 3/4, 1/4, -1/4)$.
\end{enumerate}
In case $(i)$ we consider flags 
\[\begin{tikzcd}[column sep=.7em, row sep=.7em]  
0 \arrow[r,symbol=\subset]  &F_2  \ar[bend left=70]{r} \arrow[r,symbol=\subset]& F_4 \ar[bend left=70]{r}\arrow[r,symbol=\subset]& F_2^\perp  \arrow[r,symbol=\subset] & V. 
\end{tikzcd}\]
Choose a basis $u,v\in F_2$. We first want to prove that $v_x'=0$ for all $v'\in F_2$. Since $\dim V(x)=1$ there are scalars $\lambda_x, \mu_x$ which are not both zero such that $\mu_x u_x+\lambda_x v_x=0$. Define $w=\lambda_x v+\mu_x u$. Then $w\neq 0$ and since $\langle w,\psi^2 w\rangle=0$ we find that $\langle w_y,w_{-y}\rangle =0$. This implies that $w_y=0$ or $w_{-y}=0$. Without loss of generality we may assume that $w_y=0$, i.e. $w=w_0+w_{-y}+w_{-x}$. Now $\psi w, \psi^2 w\in F_2^\perp$ implies that $w_{-x}\in F_2^\perp$ and $w_{-y}\in F_2^\perp$. If $w_{-x}=0$ then if also $w_{-y}=0$ the space $F_2$ contains $w_0\neq 0$ and in particular the line spanned by it and we may insert that. So either $w_{-x}\neq 0$ or $w_{-y}\neq 0$ and we may assume that $w_{-x}\neq 0$ (otherwise we end up eliminating the $y$-component in $F_2$). Now for all $v'\in F_2$ we have $\langle v',w_{-x}\rangle =0$ and therefore $v'=v_y'+v_0'+v_{-y}'+v_{-x}'$. \par
In the second step we will prove that also $v_y'=0$ for all $v'\in F_2$. As before there are scalars $\lambda_{x}, \mu_{-x}$ such that 
\[\lambda_{-x}v_{-x}+\mu_{-x}u_{-x}=0 .\]
We define $w'=\lambda_{-x}v+\mu_{-x}u\neq 0$. The same argument as before shows that $w_{-y}'\in F_2^\perp$ and that we may assume it's non-zero. This proves that $v_y'=0$ for all $v'\in F_2$. \par
A simple calculation now shows that $W=F_2+\psi(F_2)+\psi^2(F_2)$ is $\psi$-stable and isotropic. If neither $F_2$ nor $F_2+\psi(F_2)$ are already stable, then $W$ is either $F_4$ or some other Lagrangian and we may insert that space. \par
In case $(ii)$ the flags are
\[\begin{tikzcd}[column sep=.7em, row sep=.7em]  
0 \arrow[r,symbol=\subset]& F_1  \ar[bend left=70]{r} \arrow[r,symbol=\subset] &F_2  \ar[bend left=70]{r} \arrow[r,symbol=\subset]&F_3  \arrow[r,symbol=\subset] \ar[bend left=40]{rrr}& F_4 \arrow[r,symbol=\subset]&F_3^\perp   \arrow[r,symbol=\subset]& F_2^\perp  \arrow[r,symbol=\subset] &F_1^\perp   \arrow[r,symbol=\subset]&  V. 
\end{tikzcd}\]
Since $\psi^2(F_2)\subset F_2^\perp$ we can insert the space $F_2+\psi(F_2)+\psi^2(F_2)$ and the argument is the same as in case (i).

\subsubsection{Case (3), \ref{orbitD}}
We have $x_{\bQ}=(1/4,1/4,0,0,0)$ and may assume that $x_i-x_{i+1}\le 1/2$ for $i\le 3$ (otherwise we may insert a space of the form $F+\psi(F)$ immediately). This leaves us with the following non-trivial cases 
\begin{enumerate}[nosep, label=(\roman*)] 
\item $wx_{\bQ}=(3/4,3/4,1/4,0,0)$,
\item $wx_{\bQ}=(5/4,3/4,3/4,0,0)$,
\item $wx_{\bQ}=(5/4,1,3/4,3/4,0)$.
\end{enumerate}
In case $(i)$ the flags are of the form 
\[\begin{tikzcd}[column sep=.7em, row sep=.7em]  
0 \arrow[r,symbol=\subset]  &F_2  \ar[bend left=70]{rr} \arrow[r,symbol=\subset]& F_3 \ar[bend left=70]{rr}\arrow[r,symbol=\subset]& F_3^\perp \arrow[r,symbol=\subset]& F_2^\perp  \arrow[r,symbol=\subset] & V. 
\end{tikzcd}\]
It's enough to show that actually $\psi(F_3)\subset F_3^\perp$. This will imply that $F_3+\psi(F_3)$ is isotropic and stable. Choose a basis $v_1,v_2$ of $F_2$ and extend to the basis $v_1,v_2,v_3$ of $F_3$. We need to show that $\langle u,\psi(v)\rangle =0$ for all $u,v\in F_3$. Write $u=\sum \lambda_i v_i$ and $v=\sum \mu_iv_i$ in the chosen basis. We have
\[\langle u,\psi(v) \rangle = \mu_3 \langle u,\psi(v_3)\rangle \]
since $\psi(F_2)\subset F_3^\perp$. Furthermore since also $\psi(F_3)\subset F_2^\perp$ we have 
\[ \mu_3 \langle u,\psi(v_3)\rangle =\lambda_3\mu_3 \langle v_3, \psi(v_3) \rangle. \]
The last term vanishes because for any $w=w_x+w_{-x}$ we have
\[\langle w, \psi(w) \rangle = -x\langle w_x,w_{-x} \rangle + x \langle w_{-x},w_x \rangle =0  \]
since the pairing is symmetric. \par
Cases $(ii)$ and $(iii)$ work the same as case $(i)$ in §\ref{FlagsD12}.

\subsubsection{Case (4), \ref{orbitD}} 
We have $x_{\bQ}=(1/4,1/4,1/4,1/4,1/4)$ and the non-trivial possibilities for $wx_{\bQ}$ are 
\begin{enumerate}[nosep, label=(\roman*)] 
\item $wx_{\bQ}=(3/4,3/4,1/4,1/4,1/4)$,
\item $wx_{\bQ}=(5/4,3/4,1/4,1/4,-1/4)$,
\item $wx_{\bQ}=(7/4,5/4,3/4,1/4,1/4)$,
\end{enumerate}
In case $(i)$ we consider flags
\[\begin{tikzcd}[column sep=.7em, row sep=.7em]  
0 \arrow[r,symbol=\subset]  &F_2  \ar[bend left=70]{r} \arrow[r,symbol=\subset]& F_5 \ar[bend left=70]{r}\arrow[r,symbol=\subset]& F_2^\perp  \arrow[r,symbol=\subset] & V. 
\end{tikzcd}\]
We claim that $W=F_2+\psi(F_2)+\psi^2(F_2)$ is isotropic. Note that we may assume $\psi$ to be injective on $F_2$, because otherwise we can insert its kernel. It suffices to show that $\langle \psi v, \psi^2 w\rangle =0$ and that $\langle \psi^2 v, \psi^2 w\rangle =0$ for all $v,w\in F_2$. We have 
\[\langle \psi v, \psi^2 w\rangle=\langle \psi v, \psi^2 w\rangle+\langle \psi v ,x^2 w_0\rangle =x^2\langle \psi v, w\rangle =0 \]
and similarly we get $\langle \psi^2 v, \psi^2 w\rangle =0$. This implies that the dimension of $W$ is at most $5$ and hence we have $F_2\cap \psi(F_2)\neq 0 $ or $(F_2+\psi(F_2))\cap \psi^2(F_2)\neq 0$. In the first case $F_2$ contains a non-zero vector $w$ such that $\psi w\in F_2$ and hence it contains $2xw_x+xw_0$ and $-2xw_{-x}-xw_0$. We may assume that $w_x, w_0, w_{-x}\neq 0$ because otherwise $F_2$ contains an eigenvector. Now $F_2$ is spanned by $2xw_x+xw_0$ and $-2xw_{-x}-xw_0$ and hence $F_2+\psi(F_2)$ is $\psi$-stable. \par 
If $F_2\cap \psi(F_2)=0$ and $(F_2+\psi(F_2))\cap \psi^2(F_2)\neq 0$ we either have that $W$ is five-dimensional in which case it's Lagrangian and we may insert it into the flag or $\psi^2(F_2)\subset F_2+\psi(F_2)$ in which case $F_2+\psi(F_2)$ is $\psi$-stable. \par
In cases $(ii)$ and $(iii)$ we may simply insert the space $F_1+\psi(F_1)+\psi^2(F_1)$ which is automatically stable and isotropic.

\subsubsection{Case (5), \ref{orbitD}}
We have $x_{\bQ}=(1/4,1/4,1/4,0,0,0)$ and as for $n=5$ we may assume that $x_i-x_{i+1} \le 1/2$ for $i\le 4$. This leaves us with the following cases
\begin{enumerate}[nosep, label=(\roman*)] 
\item $wx_{\bQ}=(3/4,3/4,1/4,0,0,0)$,
\item $wx_{\bQ}=(1,1,3/4,3/4,1/4,0)$,
\item $wx_{\bQ}=(5/4,1,3/4,3/4,0,0)$,

\end{enumerate}

The arguments in these cases are the same as for $n=5$.

\subsubsection{Case (6), \ref{orbitD}} 

We have $x_{\bQ}=(1/4,1/4,1/4,1/4,1/4,1/4)$. The list of possible cases is

\begin{enumerate}[nosep, label=(\roman*)]
\item $wx_{\bQ}=(3/4,3/4,1/4,1/4,1/4,1/4)$,
\item $wx_{\bQ}=(3/4,3/4,3/4,3/4,1/4,1/4)$,
\item $wx_{\bQ}=(5/4,3/4,1/4,1/4,1/4,-1/4)$,
\item $wx_{\bQ}=(5/4,3/4,3/4,3/4,1/4,-1/4)$,
\item $wx_{\bQ}=(5/4,5/4,3/4,3/4,1/4,1/4)$,
\item $wx_{\bQ}=(5/4,5/4,5/4,3/4,1/4,-1/4))$,
\item $wx_{\bQ}=(7/4, 5/4, 3/4, 1/4, 1/4, 1/4)$,
\item $wx_{\bQ}=(7/4, 7/4, 5/4, 3/4, 1/4, -1/4)$,
\item $wx_{\bQ}=(9/4, 7/4, 5/4, 3/4, 1/4, 1/4)$.
\end{enumerate}

In case $(i)$ we look at flags
\[\begin{tikzcd}[column sep=.7em, row sep=.7em]  
0 \arrow[r,symbol=\subset]  &F_2  \ar[bend left=70]{r} \arrow[r,symbol=\subset]& F_6 \ar[bend left=70]{r}\arrow[r,symbol=\subset]& F_2^\perp  \arrow[r,symbol=\subset] & V. 
\end{tikzcd}\]
We may assume that $\psi |_{F_2}$ is injective. As before $W=F_2+\psi(F_2)+\psi^2(F_2)$ is isotropic and $\psi$-stable. If $\dim(W)=6$ it's Lagrangian and either equal to $F_6$ or we may insert it as a second Lagrangian subspace. If $\dim(W)<6$ we have $F_2\cap \psi(F_2)\neq 0 $ or $(F_2+\psi(F_2))\cap \psi^2(F_2)\neq 0$. \par
If $F_2\cap \psi(F_2)\neq 0 $ we conclude that $F_2+\psi(F_2)$ is $\psi$-stable as in the corresponding case for $n=5$. If $F_2\cap \psi(F_2)=0 $ and $(F_2+\psi(F_2))\cap \psi^2(F_2)\neq 0$ there is a vector $v+\psi(u)\in \psi^2(F_2)$. This implies $v_0=0$ and $\psi(u)=x^2(v+\psi(u))-\psi^2(v)\in \psi^2(F_2)$. Therefore $\psi(u)=\psi^2(u')$ for some $u'\in F_2$ and hence $u\in F_2\cap \psi(F_2)=0 $. In the end we find that $v\neq 0$, i.e. $F_2$ contains the non-zero vector $v=v_x+v_{-x}$. If either component vanishes, then $v$ is an eigenvector and we may insert the line spanned by it into the flag. If both are non-zero, then since $v_x,v_{-x}\in \psi^2(F_2)$ we get $v_x,v_{-x}\in (F_2+\psi(F_2))\cap \psi^2(F_2)$ and this intersection has to be two-dimensional. But that implies $\psi^2(F_2)\subset (F_2+\psi(F_2)$ and hence $F_2+\psi(F_2)$ is $\psi$-stable. \par
In case $(ii)$ we consider flags of the form 
\[\begin{tikzcd}[column sep=.7em, row sep=.7em]  
0 \arrow[r,symbol=\subset]  &F_4  \ar[bend left=70]{r} \arrow[r,symbol=\subset]& F_6 \ar[bend left=70]{r}\arrow[r,symbol=\subset]& F_4^\perp  \arrow[r,symbol=\subset] & V. 
\end{tikzcd}\]
The space $F_4+\psi(F_4)+\psi^2(F_4)$ is isotropic. If $F_4$ is not stable, then $F_4+\psi(F_4)$ is at least five dimensional. If this space itself is not already stable, then $F_4+\psi(F_4)+\psi^2(F_4)$ is Lagrangian and we may use it to refine the flag. 
In case $(iii)$ we may simply insert the space $F_1+\psi(F_1)+\psi^2(F_1)$. Case $(iv)$ is similar to case $(ii)$, we may insert the space $F_4+\psi(F_4)+\psi^2(F_4)$. \par
In case $(v)$ we have flags
\[\begin{tikzcd}[column sep=.7em, row sep=.7em]  
0 \arrow[r,symbol=\subset]  &F_2  \ar[bend left=70]{r} \arrow[r,symbol=\subset]&F_4  \ar[bend left=70]{r} \arrow[r,symbol=\subset]& F_6 \ar[bend left=70]{r}\arrow[r,symbol=\subset]& F_4^\perp  \arrow[r,symbol=\subset] \ar[bend left=70]{r}&F_2^\perp \arrow[r,symbol=\subset] & V. 
\end{tikzcd}\]

We may insert $F_2+\psi(F_2)$ if it is equal to $F_4$. Otherwise $F_2 \cap \psi(F_2)\neq 0$ and there is a vector $w\in F_2$ such that $\psi(w)\in F_2$. We then find that $F_2+\psi(F_2)$ contains $w_x,w_{-x}$ and $w_0$. If any of these vanishes, $F_2$ contains an eigenvector. Otherwise $F_2+\psi(F_2)$ is generated by these three vectors. \par
Cases $(vi)-(ix)$ are easy, simply use the spaces $F+\psi(F)+\psi^2(F)$ where $F$ always denotes the first-non zero step of the flag. 

\subsection{Stabilizers on the unit coset} \label{s: unit coset} In the previous section we proved the first part of Theorem \ref{thm: hypersp quasi-epi}. In the following we analyze the unit coset in more detail. We prove strictness in types $A$ and $D$. 

\begin{remark} The covering $\Spin(V) \rightarrow \SO(V)$ induces an isomorphism on flag varieties. The stabilizers only differ by the kernel of the covering. Therefore in types $B$ and $D$ we may (and will) work with $\SO(V)$ in the following. 
\end{remark}

\subsection{} Fix a maximal torus $T$ and a Borel $B$ containing it. We may assume $Q\supset B$ and $\psi\in\frt$.  Then $G_{\psi}\supset T$, and we may assume $T\subset B_{\psi}\subset B$. Let $P_{\psi}=BG_{\psi}$; this is a standard parabolic subgroup of $G$ containing $G_{\psi}$ as a Levi subgroup. Let $N, N^{\psi}, N_{\psi}$ be the unipotent radicals of $B$, $P_{\psi}$ and $B_{\psi}$ respectively.

Let $W_{Q}$ be the Weyl group of the Levi subgroup of $Q$ containing $T$. The double cosets $B\bs G/Q$ are parametrized by $W/W_{Q}$.  Let $w\in W$, and let $\dot w$ be any lift of $w$ to $N_{G}(T)$. Then we can identify the $B$-orbit of $\dot wQ/Q$ with $N/N\cap {}^{w}Q$ (write ${}^{w}Q=\Ad(\dot w)Q$). The left translation action of $B_{\psi}=N_{\psi}T$ on $B\dot wQ/Q$ becomes left translation of  $N_{\psi}$ on $N/N\cap {}^{w}Q$ and the action of $T$ by conjugation.  Using the left $N_{\psi}$-action, every $B_{\psi}$ orbit on $N/N\cap{}^{w}Q$ intersects $N^{\psi}/N^{\psi}\cap{}^{w}Q$. On $N^{\psi}/N^{\psi}\cap{}^{w}Q$, there is the residual action of $B_{\psi}\cap{}^{w}Q$ by conjugation. Therefore, it suffices to show that the stabilizers of the action of $B_{\psi}\cap{}^{w}Q$ on $N^{\psi}/N^{\psi}\cap{}^{w}Q$ by conjugation contain nontrivial tori, except in one case.

Let $W_{\psi}\subset W$ be the Weyl group of $G_{\psi}$ (with respect to $T$). 

\begin{lemma} Let $w_{0}$ be the longest element in $W$ with respect to the simple reflections defined by $B$. If $w$ is not in the double coset $W_{\psi}w_{0}W_{Q}$, then the action of $T$ on $N^{\psi}/N^{\psi}\cap{}^{w}Q$ has positive dimensional stabilizers.
\end{lemma}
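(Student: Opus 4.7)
The plan is to reduce the lemma to a combinatorial non-spanning statement about roots, and then to settle that by a dimension count. First I would exhibit the $T$-action explicitly. Since $\Phi(N^{\psi})\cap w\Phi(Q)$ is closed under root addition inside $\Phi^{+}$ (being the intersection of two such closed subsets), any height-compatible order on $\Phi(N^{\psi})$ placing this intersection last produces a $T$-equivariant decomposition $N^{\psi}=U_{w}\cdot(N^{\psi}\cap{}^{w}Q)$ with $U_{w}=\prod_{\alpha\in S_{w}}U_{\alpha}$ and $S_{w}:=\Phi(N^{\psi})\setminus w\Phi(Q)$. Passing to the quotient yields a $T$-equivariant isomorphism $N^{\psi}/(N^{\psi}\cap{}^{w}Q)\cong U_{w}\cong\prod_{\alpha\in S_{w}}\Ga$ on which $T$ acts diagonally with weights $\{\alpha\}_{\alpha\in S_w}$. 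The kernel $K=\bigcap_{\alpha\in S_{w}}\ker\alpha$ of this action is contained in every stabilizer, so the lemma is equivalent to the assertion that $S_{w}$ does not $\QQ$-span $\xch(T)_{\QQ}$.

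Second I would translate the hypothesis $w\notin W_{\psi}w_{0}W_{Q}$ into a root-theoretic condition. Write $P_{\psi}$ for the standard parabolic with Levi $L_{\psi}$. A standard Bruhat argument gives the identification $W_{\psi}\bs W/W_{Q}\cong P_{\psi}\bs G/Q$ with $W_{\psi}w_{0}W_{Q}$ corresponding to the unique open $P_{\psi}$-orbit on $G/Q$. Computing $\dim(L_{\psi}\cdot{}^{w}Q)=\dim T+\#(\Phi(L_{\psi})\cup w\Phi(Q))$ and comparing with $\dim G$ gives the equivalence
\[
w\in W_{\psi}w_{0}W_{Q}\;\Longleftrightarrow\;\Phi(P_{\psi})\cup w\Phi(Q)=\Phi\;\Longleftrightarrow\;\Phi(N^{\psi})\cap w\Phi(N_{Q})=\emptyset,
\]
after unwinding $\Phi(P_{\psi})=\Phi^{+}\cup(-\Phi^{+}(L_{\psi}))$ and $\Phi(Q)=\Phi^{+}\cup(-\Phi^{+}(L_{Q}))$. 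Hence $w\notin W_{\psi}w_{0}W_{Q}$ provides at least one extra root $\alpha^{*}\in\Phi(N^{\psi})\cap w\Phi(N_{Q})\subseteq\Phi(N^{\psi})\cap w\Phi(Q)$ that is absent from the open double coset.

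The decisive step, which I expect to be the main obstacle, is to upgrade this single extra root into a strict cardinality bound forcing non-spanning. The open-orbit identity $\dim B_{\psi}=\dim G/Q$ imposed by the spherical hyperspecial setup rearranges to $\#\Phi(N^{\psi})=\dim T+\#\Phi^{+}(L_{Q})$, and the defining formula $\#S_{w}=\#\Phi(N^{\psi})-\#(\Phi(N^{\psi})\cap w\Phi(Q))$ then reduces the lemma to the assertion that $\#(\Phi(N^{\psi})\cap w\Phi(Q))>\#\Phi^{+}(L_{Q})$ whenever $w\notin W_{\psi}w_{0}W_{Q}$. The strategy I would pursue is a length-additive comparison of $w$ with a minimum-length representative of the open double coset in the Iwahori--Weyl group, showing that the baseline contribution $\#(\Phi(N^{\psi})\cap w\Phi(L_{Q}))$ stays $\ge\#\Phi^{+}(L_{Q})$ as $w$ moves away from $W_{\psi}w_{0}W_{Q}$, while the new root $\alpha^{*}$ contributes the required additional unit. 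Once this monotonicity is made precise, $\#S_{w}<\dim T$ is automatic and spanning fails for trivial cardinality reasons.
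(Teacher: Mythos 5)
Your first two steps are correct and, for step 1, coincide with the paper's own reduction. The $T$-equivariant identification $N^\psi/(N^\psi\cap{}^wQ)\cong N^\psi\cap{}^wN_Q^-$ reduces the lemma to showing that the weight set $S_w=\Phi(N^\psi)\setminus w\Phi(Q)=\Phi(N^\psi)\cap w\Phi(N_Q^-)$ fails to span $\xch(T)_\QQ$. Step 2, while not part of the paper's argument, is also correct: $w\in W_\psi w_0 W_Q$ if and only if $P_\psi\cdot{}^wQ$ is open in $G$, which unwinds to $\Phi(N^\psi)\cap w\Phi(N_Q)=\emptyset$.

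The proposal breaks at step 3, and not merely because a sub-lemma is left unproved: the cardinality bound $\#S_w<\dim T$ you are aiming for is false. First, the auxiliary inequality you state, $\#(\Phi(N^\psi)\cap w\Phi(L_Q))\ge\#\Phi^+(L_Q)$, goes the wrong way. Since $w\Phi(L_Q)$ is stable under $\gamma\mapsto-\gamma$ while $\Phi(N^\psi)\subset\Phi^+$, each of the $\#\Phi^+(L_Q)$ sign-pairs in $w\Phi(L_Q)$ contributes at most one root to the intersection, and contributes none when the positive member of the pair lies in $\Phi^+(L_\psi)$ rather than in $\Phi(N^\psi)$; hence $\#(\Phi(N^\psi)\cap w\Phi(L_Q))\le\#\Phi^+(L_Q)$, with strict inequality exactly when $w\Phi(L_Q)\cap\Phi(L_\psi)\neq\emptyset$. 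Second, and decisively, no rearrangement of the count can save the approach, because $\#S_w$ can reach $\dim T$ for $w$ outside the open coset. Take $G=\SL_6$ with $\lambda_\psi=(4,2)$ (blocks $\{1,\dots,4\}$ and $\{5,6\}$) and $\lambda_Q=(2,2,2)$, which is case (7) of the type-$A$ list, and set $w=(1\,5)(3\,6)\in S_6$. Then $\Phi(N^\psi)\cap w\Phi(N_Q)=\{e_2-e_6\}\neq\emptyset$, so $w\notin W_\psi w_0 W_Q$; nevertheless $S_w=\{\,e_1-e_5,\ e_1-e_6,\ e_3-e_5,\ e_3-e_6,\ e_4-e_5\,\}$, which has exactly $5=\dim T$ elements. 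The lemma still holds for this $w$: these five roots satisfy the linear relation $(e_1-e_5)-(e_1-e_6)-(e_3-e_5)+(e_3-e_6)=0$ and span only a $4$-dimensional subspace of $\xch(T)_\QQ$, and the resulting $1$-dimensional kernel torus is (up to conjugacy) the connected center of the Levi $S(\GL_5\times\GL_1)$. So the failure to span is caused by a nontrivial relation among the weights, not by their number, and cannot be detected by any cardinality argument. This is precisely why the paper, after the same reduction to non-spanning, proceeds case by case through the finitely many pairs $(P_\psi,Q)$ on Stembridge's list, typically by exhibiting the kernel subtorus directly from the linear-algebra description of the Hessenberg space (as in its $\Sp(V)$/Siegel illustration), rather than by a uniform count.
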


\begin{proof}Let $N_{Q}^{-}$ be the unipotent radical of the parabolic of $G$ opposite to $Q$ and containing $T$. The inclusion $N^{\psi}\cap {}^{w}N_{Q}^{-}\subset N^{\psi}$ induces an isomorphism $N^{\psi}\cap {}^{w}N_{Q}^{-}\isom N^{\psi}/N^{\psi}\cap{}^{w}Q$. To verify the claim, it suffices to show that the set of roots $\Phi(N^{\psi}\cap {}^{w}N^{-}_{Q})$ does not span $\xch(\TT)_{\QQ}$ rationally, unless $w\in W_{\psi}w_{0}W_{Q}$. This can be checked case by case.

For example, consider the case where $G=\Sp(V)$ and $P_{\psi}$ and $Q$ are both Siegel parabolic subgroups. Let $\{e_{1},\cdots, e_{n}, e_{-n},\cdots, e_{-1}\}$ be a symplectic basis of $V$; let $T$ be the diagonal torus with respect to this basis.  The above claim is equivalent to the following statement: let $L,L'$ be two Lagrangian subspaces spanned by part of the basis. Let $N_{L}$ be the unipotent radical of the stabilizer of $L$, and similarly define $N_{L'}$. Then as long as $L\ne L'$, the action of $T$ on $L\cap L'$ has positive dimensional  stabilizers. 

This latter statement can be proved as follows. Identify $N_{L}$ with $\Sym^{2}(L)$ using the symplectic form. Then $N_{L}\cap N_{L'}\subset N_{L}$ is identified with the subspace $\Sym^{2}(L\cap L')$ of $\Sym^{2}(L)$. If $L\ne L'$, let $T_{1}$ be the subtorus of $T$ corresponding the basis elements $e_{\pm i}$ such that $e_{\pm i}\notin L\cap L'$. Then $T_{1}$ acts trivially on $\Sym^{2}(L\cap L')\cong N_{L}\cap N_{L'}$. 
\end{proof}

\begin{remark} More precisely let $U$ denote the rational span of $\Phi(N^{\psi}\cap {}^{w}N^{-}_{Q})$. If $U\neq \xch(T)_{\QQ}$ then the stabilizer contains $\bigcap_{\alpha \in \Phi(G)\cap U} \ker(\alpha)$. The set $\Phi(G)\cap U$ is the root system of a Levi subgroup of $G$ and $\bigcap_{\alpha \in \Phi(G)\cap U} \ker(\alpha)$ is its center. 
\end{remark}

\subsection{} It remains to treat the case $w\in W_{\psi}w_{0}W_{Q}$.  Write $w\in vw_{0}W_{Q}$ for some $v\in W_{\psi}$. We need to consider the stabilizers of the conjugation action of $B_{\psi}\cap {}^{vw_0}Q$ on $N^{\psi}/N^{\psi}\cap{}^{vw_0}Q$. Conjugating by a lifting of $v$ in $N_{G_{\psi}}(T)$, we may as well consider the action of ${}^{v^{-1}}B_{\psi}\cap {}^{w_0}Q$ on $N^{\psi}/N^{\psi}\cap {}^{w_0}Q$. Let $Q'=({}^{w_0}Q)^{-}$ be the opposite parabolic of ${}^{w_0}Q$ and let $M=Q'\cap {}^{w_0}Q$ (common Levi of $Q'$ and ${}^{w_0}Q$). Then ${}^{v^{-1}}B_{\psi}\cap {}^{w_0}Q$ always contains a Borel subgroup $B_{H}$ of the Levi subgroup $H=G_{\psi}\cap M$ of $G$. Identifying $N^{\psi}/N^{\psi}\cap {}^{w_0}Q$ with $N^{\psi}\cap N_{Q'}$, we reduce to considering  the action of $B_{H}$ on $N^{\psi}\cap N_{Q'}$ by conjugation (the whole $H$ acts on $N^{\psi}\cap N_{Q'}$ by conjugation). We remark that after this reduction, the roles played by $P_{\psi}$ and $Q'$ are symmetric. 

Below we describe case-by-case the action of $H$ on $N^{\psi}\cap N_{Q'}$ in linear algebra terms. 
In the following, $V_{n}, V'_{n}$ always denote an $n$-dimensional space; when $n=1$, we use $\Gm(V_{1}), \Gm(V'_{1})$ to denote the one-dimensional torus that acts on $V_{1}, V'_{1}$ by scaling. 

In all cases $N^{\psi}\cap N_{Q'}$ is a vector group, and we describe it as a representation of $H$. The Borel subgroup $B_{H}$ of $H$ acts on $N^{\psi}\cap N_{Q'}$ with an open orbit with finite stabilizer. In the following we analyze the orbits of $B_{H}$ acting on $N^{\psi}\cap N_{Q'}$ more precisely. Recall that by $P_{d}$ we denote the stabilizer of a $d$-dimensional subspace $V_d$ and by $P_{d,d'}$ the stabilizer of a flag $0\subset V_d \subset V_{d'} \subset V$ with $V_{d'}$ being $d'$-dimensional. We list the possible conjugacy classes of $Q$ and the corresponding $H$-representation $N^{\psi}\cap N_{Q'}$. 

\subsection{Type $A$}
\begin{enumerate}
\item[(1)] $H=T$ (in this case there is no need to describe $N^{\psi}\cap N_{Q'}$).
\item[(2a)] $G=\SL_{2m}$,  $(P_{\psi},Q)\sim (P_{m}, P_{m,m+1})$ or $ (P_{m}, P_{m,2m-1})$ or $(P_{m}, P_{1,m})$ or $(P_{m}, P_{m-1,m})$. Then $H=S(\GL(V_{m})\times \GL(V_{m-1})\times \Gm(V_{1}))$, $N^{\psi}\cap N_{Q'}=\Hom(V_{m-1},V_{m})\op\Hom(V_{1},V_{m})$ or its dual.

\item[(2b)] $G=\SL_{2m}$, $(P_{\psi},Q)\sim(P_{m}, P_{1,m})$ or $(P_{m}, P_{m-1,2m-1})$. Then $H=S(\GL(V_{m-1})\times \GL(V'_{m-1})\times \Gm(V_{1})\times\Gm(V'_{1}))$, $N^{\psi}\cap N_{Q'}=\Hom(V_{m-1}, V'_{m-1})\op \Hom(V_{1},V'_{1})\op\Hom(V_{m-1}, V_{1}')$ or its dual.

\item[(3a)] $G=\SL_{2m+1}$, $(P_{\psi},Q)\sim(P_{m}, P_{m,m+1})$ or $(P_{m}, P_{1,m+1})$ or $(P_{m+1},P_{m,2m})$ or $(P_{m+1}, P_{m,m+1})$. Then $H=S(\GL(V_{m})\times \GL(V'_{m})\times\Gm(V_{1}))$, $N^{\psi}\cap N_{Q'}=\Hom(V_{m},V'_{m})\op\Hom(V_{1},V'_{m})$ or its dual.

\item[(3b)] $G=\SL_{2m+1}$, $(P_{\psi},Q)\sim (P_{m}, P_{m,2m})$ or $(P_{m+1}, P_{1,m+1})$. Then $H=S(\GL(V_{m-1})\times \GL(V_{m})\times \Gm(V_{1})\times\Gm(V'_{1}))$, $N^{\psi}\cap N_{Q'}=\Hom(V_{m},V_{m-1})\oplus \Hom(V_m,V_1) \oplus \Hom(V_1,V_1')$ or its dual.

\item[(4)] $H=S(\GL(V_{2})\times\GL(V'_{2})\times\GL(V''_{2}))$, $N^{\psi}\cap N_{Q'}=\Hom(V_{2},V_{2}')\op \Hom(V_{2}, V_{2}'')$ or its dual.
\end{enumerate}

Case (1) is clear for dimension reasons: any non-open orbit in $N^{\psi}\cap N_{Q'}$ has dimension less than that of $T$, hence has positive dimensional subtorus in the stabilizers. In addition it's easy to check that the stabilizer on the open orbit is just the center of $\SL_{n}$ and that outside the open orbit the stabilizer always contains the center of a maximal Levi subgroup.

We reduce the cases (2a)(2b)(3a)(3b) to Case (1). We give the argument only for (2a). Let $a:V_{m-1}\to V_{m}$, $b: V_{1}\to V_{m}$, and $a\op b: V_{m-1}\op V_{1}\to V_{m}$.  The image $\Stab_{B_{H}}(a,b)\to \GL(\coker(a\op b))$ contains a Borel subgroup of the target.  Therefore $\Stab_{B_{H}}(a,b)$ contains a nontrivial torus if $a\op b$ is not surjective (equivalently not an isomorphism). In the remaining case, we may assume $a\op b: V_{m-1}\op V_{1}\isom V_{m}$ is an isomorphism. Such $(a,b)$ form a single orbit $O$ under $H$. We have an isomorphism of stacks $B_{H}\bs O\cong B(V_{m-1})\bs \GL(V_{m})/B(V_{m})$, where $B_{H}=B(V_{m-1})\times B(V_{m})$, and $B(V_{m-1})$ is embedded into $\GL(V_{m})$ via $a$.  This is exactly the situation of case (1) for $\GL(V_{m})$ where $B_{\psi}$ is the Borel subgroup of a Levi subgroup of of type $(m-1,1)$. The computation of the generic stabilizer and stabilizer outside the open orbit is also reduced to Case (1).

For Case (4)  we can use similar argument as above to reduce to the open $H$-orbit $O$ where $a: V_{2}\to V'_{2}$ and $b:V_{2}\to V'_{2}$ are both isomorphisms. For this $H$-orbit $O$, we use $a$ and $b$ to identify $V_{2}'$ and $V''_{2}$ with $V_{2}$, and write $B_{H}=S(B_{2}\times B'_{2}\times B''_{2})$ (where $B_{2}\subset \GL(V_{2})$ is a Borel etc),  then $\Stab_{B_{H}}(a,b)$ surjects onto $(B_{2}\cap B_{2}'\cap B_{2}'')/\Gm$ (modulo scalar matrices).  If $(a,b)$ is in a non-open $B_{H}$-orbit, then two of the Borels $B_{2}, B_{2}', B_{2}''$ are the same and $(B_{2}\cap B_{2}'\cap B_{2}'')/\Gm$ contains the center of some maximal Levi subgroup. On the open orbit it's clear that the intersection $B_{2}\cap B_{2}'\cap B_{2}''$ is the center of $\GL_2$ and hence the generic stabilizer is the center of $\SL_6$.

\subsection{Type $B$}\label{openB}
\begin{enumerate}
\item $H=\GL(V_{n})$, $N^{\psi}\cap N_{Q'}=\wedge^{2}(V_{n})\op V_{n}$.
\item $H=T=\Gm(V_{1})\times \Gm(V_{1}')$, $N^{\psi}\cap N_{Q'}=V_{1}\ot V_{1}'\op V_{1} \ot (V_{1}')^\vee$
\item $H=\Gm(V_{1})\times \Gm(V'_{1})\times \SO(V_{3})$, $N^{\psi}\cap N_{Q'}=V_{1}\ot V_{3}\op V_{1}\ot V_{1}'$.
\end{enumerate}

Case (1) is not strict as it may have several relevant orbits.
In Case (2) the stabilizer on the open orbit is $\mu_2=\{ \pm \id \}$, so it is not strict. \par
We consider Case (3). We identify $N^\psi\cap N_{Q'}$ with $V_3\ot V_1$ on which $(\lambda, \mu, X)\in \Gm\times \Gm \times \SO(V_3)$ acts as $(v,x)\mapsto (\lambda X v, \lambda \mu x)$.  Let $B_3\in \SO(V_3)$ be the stabilizer of an isotropic line $\ell$. Fix a vector $(v,x)$. If $v$ is isotropic or $x=0$ it's easy to see that the stabilizer of $(v,x)$ in $B_H=B_3\times \Gm\times \Gm$ contains a torus. Therefore assume that $v$ is anisotropic and $x\neq 0$. Denote by $\langle, \rangle$ the symmetric bilinear form on $V_3$. Then if $(\lambda, \mu, X) \in \Stab_{B_H}(v,x)$ we find that $\mu=\lambda^{-1}$ and $\langle v,v\rangle = \langle \lambda Xv, \lambda Xv\rangle = \lambda^2 \langle v,v\rangle$, implying that $\lambda=\pm 1$. The vector $v$ determines a splitting $V=\langle v \rangle^\perp \op \langle v \rangle$ and $\Stab_{\SO(V_3)}(v) \cong \SO(\langle v \rangle^\perp)\cong \Gm$. If $v\in \ell^\perp$ the stabilizer $\Stab_{B_H}(v,x)$ contains this $\Gm$. The open orbit is given by those $(v,x)$ where $v\notin \ell^\perp$ and $x\neq 0$. It is easily verified that in this case the stabilizer is $\mu_2=\{ \pm \id \}$ and hence this case is also not strict. 
\subsection{Type $C$}
\begin{enumerate}
\item $H=\GL(V_{n})$,  $N^{\psi}\cap N_{Q'}=\Sym^{2}(V_{n})$.
\item $H=\Gm(V_{1})\times \Gm(V'_{1})\times \SL(V_{2})$, $N^{\psi}\cap N_{Q'}=V_{1}\ot V_{2}\op V_{1}\ot V_{1}\op V_{1}\ot V_{1'}$.
\end{enumerate}

We consider Case (1). We may identify $\Sym^{2}(V_{n})$ with the space of quadratic forms on $V_{n}^{*}$. Let $L=\ker(q)\subset V_{n}^{*}$, and $P_{L}\subset \GL(V_{n})$ be the parabolic subgroup stabilizing $L$. Then we have a natural map $\Stab_{B_{H}}(q)\subset B_{H}\cap P_{L}\to \GL(L)$ whose image is a Borel subgroup of $\GL(L)$. Therefore if $L\ne0$, $\Stab_{B_{H}}(q)$ contains a nontrivial torus. Now suppose $L=0$, i.e., $q$ is nondegenerate. We equip $V_{n}$ with the quadratic form induced from $q$, and still denote it by $q$. Let $B_{H}$ be the stabilizer of a complete flag $F=(0\subset V_{1}\subset \cdots\subset V_{n-1}\subset V_{n})$. Consider the relative position  of the flag  $F^{\bot}=(0\subset V_{n-1}^{\bot}\subset \cdots V_{1}^{\bot}\subset V_{n})$ ($(-)^{\bot}$ is taken under the quadratic form $q$). If $F$ and $F^{\bot}$ are not opposite,  consider the first $i\ge1$ such that $V_{i}\cap V_{i}^{\bot}\ne0$ (i.e., the first $i$ such that $q|_{V_{i}}$ is degenerate), in which case $\ker(q|V_{i})$ is $1$-dimensional. Then $\Stab_{B_{H}}(q)\to \Gm(\ker(q|V_{i}))$ is surjective, hence $\Stab_{B_{H}}(q)$ contains a nontrivial torus. If $F$ and $F^{\bot}$ are opposite, then $q$ is the in the open $B_{H}$-orbit of $\Sym^{2}(V_{n})$. The intersection of the stabilizers of $F$ and $F^\perp$ is a maximal torus $T$ of $\GL(V_n)$ and the stabilizer on the open orbit is the $2$-torsion $T[2]\cong \mu_2^n$. \par
We consider Case (2). An element $(\lambda, \mu, X)\in H$ acts on a vector $(v,x,y)$ as $(\lambda Xv, \lambda^2 x, \lambda \mu y)$. Let $B_2$ be the stabilizer of a line $\ell \subset V_2$ and $B_H=\Gm \times \Gm \times B_2$. If $v=0$ then $\Stab_{B_H}(v,x,y)$ contains $B_2$. If $x=0$ then the stabilizer contains the intersection of $B_2$ and the stabilizer of $\langle v \rangle$ and if $y=0$ the stabilizer contains a torus coming from the $\Gm$-factors. We therefore assume that $v\neq 0, x\neq 0$ and $y\neq 0$. The open orbit is given by those $(v,x,y)$ for which in addition $v\notin \ell$. The stabilizer on the open orbit is $\mu_2=\{\pm \id \}$. If $v\in \ell$ the stabilizer does not contain a torus but the unipotent radical of $B_2$. 

\subsection{Type $D$}
\begin{enumerate}
\item[(1)] $(P_{\psi},Q)\sim (P_{4}, P_{1,2})$. Then $H=\Gm(V_{1})\times \Gm(V_{1}')\times \GL(V_{2})$, $N^{\psi}\cap N_{Q'}=V_{1}\ot V_{2}\op V_{1}'\ot V_{2}\op V_{1}\ot V_{1}'$. 


\item[(2)] $H=\GL(V_{2})\times \GL(V_{3})$, $N^{\psi}\cap N_{Q'}=V_{3}\ot V_{2}\op \wedge^{2}(V_{3})$.
\item[(3)] $H=\GL(V_{3})\times \GL(V'_{3})$, $N^{\psi}\cap N_{Q'}=V_{3}\ot V_{3}'\op \wedge^{2}(V_{3})$.
\end{enumerate}
We consider Case (1). An element $(\lambda, \mu, X)\in H$ acts on a vector $(u,v,x)$ via $(u,v,x)\mapsto (\lambda X u, \mu Xv, \lambda \mu x)$. Let $B_2 \in GL(V_2)$ be a Borel subgroup stabilizing a line $\ell \subset V_2$ and $B_H=\Gm\times \Gm \times B_2$. If $x=0$ the stabilizer of $(u,v,x)$ contains a torus coming from the $\Gm$-factors. If $u=0$ then the condition on $X$ is equivalent to asking that it stabilizes the line spanned by $v$ and $\Stab_{B_H}(0,v,w)$ contains the intersection of $B_2$ with the Borel subgroup stabilizing $\langle v \rangle$. The same happens for $v=0$ and we may therefore assume that $u\neq 0, v\neq 0$ and $x\neq 0$. The open orbit is given by those vectors $(u,v,x)$ for which $x\neq 0$, $u\notin \ell$, $v\notin \ell$ and $u$ and $v$ are linearly independent (i.e. $X$ is contained in the intersection of three pairwise different Borel subgroups). It's easy to check that the stabilizer on the open orbit is $\mu_2\cong \{\pm \id\}$. One may check explicitly that outside the open orbit the stabilizers contain tori which are contained in centers of maximal Levi subgroups. For $x=0$ one finds a torus of the form $Z(\GL_2)$ in a Levi subgroup isomorphic to $\GL_2\times \SO(4)$ and for $x\neq 0$ one obtains the center of a Levi subgroup isomorphic to $\Gm \times \SO(6)$ (as long as we're outside the open orbit).

\par
For Case (2) we identify $V_3 \ot V_2$ with $\Hom(V_3^*,V_2)$. An element $(Y,X)\in H$ acts on $f\in \Hom(V_3^*,V_2)$ via $f \mapsto YfX^{-1}$. Let $(\omega,f)\in\Hom(V_3^*,V_2)\op \wedge^{2}(V_3)$ and let $B_H = \Stab(\ell) \times \Stab(F)$ for a line $\ell \subset V_2$ and a full flag $F$ given by $0 \subset F_1 \subset F_2 \subset V_3$. If $f$ is not surjective clearly $\Stab_{B_H}(f,\omega)$ contains a torus. We therefore assume $f$ is surjective and hence has a one-dimensional kernel $L\subset V_3^*$. If $(Y,X)\in \Stab_{B_H}(f,\omega)$ then $X$ stabilizes the flag 
\[0\subset L \subset f^{-1}(\ell) \subset V_3^*.\]
Denote by $F^\perp$ the flag in $V_3^*$ orthogonal to $F$ with respect to the canonical pairing for $V_3$ and $V_3^*$. The open orbit is given by those $(f,\omega)$ for which $F^\perp$ and the above flag are opposite (i.e. $X$ lies in a maximal torus of $\GL(V_3)$) and for which $\omega$ (considered as a $2$-form on $V_3^*$) is not contained in any of the duals of the planes $\wedge^2(F_2^\perp \op L) \op \wedge^2(f^{-1}(\ell))$, $\wedge^2(F_2^\perp \op L) \op \wedge^2(F_1^\perp)$ or $\wedge^2(F_1^\perp )\op\wedge^2(f^{-1}(\ell))$. It is easy to verify that the stabilizer on the open orbit is $\mu_2=\{\pm \id\}$ and that it contains a torus outside this orbit. More precisely if $f$ is not surjective, the stabilizer contains the center of a Levi subgroup isomorphic to $\Gm\times \SO(8)$. If $f$ is surjective then outside the open orbit the stabilizers contain the center of a Levi subgroup isomorphic to $\GL_5$. Case $(3)$ works the same. This concludes the proof of strictness in type $A$ and $D$.

\section{Potential examples}\label{s:pot expl}
In this section we give a list of triples $(\bP_{\infty}, \psi, \bQ_{0})$ in type $A$ and exceptional types as potential examples of euphotic automorphic data with a generic choice of $\chi$.  In these examples, we only check that $B_\psi$ acts on $L/Q$ with an open orbit with finite stabilizers (part of condition (1)  in Definition \ref{d:qep data}).

\subsection{Type $A$}

\sss{Setup} Let  $\GG=\PGL(V)$ for some  vector space $V$ over $k$ of dimension $n$, and let $G$ be the split form of $\GG$ over $F$. Since all parahoric subgroups of $G(F_{\infty})$ can be conjugated to be contained in $G(\cO_{\infty})$ in this case, we may assume $\bP_{\infty}\subset G(\cO_{\infty})$. For such $\cP_{\infty}$ the corresponding $\Zm$-grading on $\frg=\op_{i\in\Zm}\frg(i)$ is induced from a $\Zm$-grading on the vector space
\begin{equation*}
V=\op_{i\in\Zm}V_{i}
\end{equation*}
such that 
\begin{eqnarray*}
\frg(i)=\op_{j\in\Zm}\Hom_{k}(V_{j}, V_{i+j}), \quad \forall i\in \Zm-\{0\},\\
\frg(0)=\Lie L=(\op_{j\in \Zm}\End_{k}(V_{j}))/k\cdot \id_{V}.
\end{eqnarray*}
Conversely, any $\Zm$-grading on $V$ with $V_{i}\ne 0$ for all $i\in\Zm$ arises from a parahoric subgroup $\bP_{\infty}\subset G(\cO_{\infty})$. Note that
\begin{eqnarray*}
L\cong \left(\prod_{j\in\Zm}\GL(V_{j})\right)/\D\Gm,\\
V^{*}_{\bP}=\op_{j\in\Zm}\Hom_{k}(V_{j}, V_{j-1}).
\end{eqnarray*}

We give two classes of potential examples.

\sss{Case (1)}
Assume the dimensions of $V_{i}$ satisfy
\begin{equation*}
\dim V_{0}=d_{0}, \quad \dim V_{i}=d \textup{ for }i\ne0, \quad \textup{ and }d_{0}>d>0.
\end{equation*}
Let $Q_{0}\subset \GL(V_{0})$ be a parabolic subgroup; let $\psi_{0}\in \End(V_{0})$ be a semisimple element. We assume
\begin{itemize}
\item The pair $(Q_{0},\psi_{0})$ appears in the list of hyperspecial euphotic data of type $A$ in \S\ref{orbitA}.
\item Let $V^{0}_{0}$ be the zero eigenspace of $\psi_{0}$; let $V'_{0}$ be the sum of nonzero eigenspaces.  Then $\dim V^{0}_{0}=d_{0}-d$ and $\dim V_{0}'=d$.
\end{itemize}

With these preliminary data, we construct  $Q$ and $\psi$ as follows. Let $Q\subset L$ be the parabolic subgroup
\begin{equation*}
Q=\left(Q_{0}\times \prod_{i\in\Zm-\{0\}} \GL(V_{i}) \right)/\D\Gm.
\end{equation*}
Viewing $\psi$ as a collection of maps $V_{i}\to V_{i-1}$ for $i\in \Zm$, we then require it to restrict to  isomorphisms $V'_{0}\isom V_{-1}\isom \cdots\isom V_{1}\isom V'_{0}\subset V_{0}$, and to restrict to zero on $V^{0}_{0}$. Moreover we require that $\psi^{m}|V_{0}=\psi_{0}$.

\sss{} We check that in the above situation, $B_{\psi}$ acts on $L/Q$ with an open orbit with finite stabilizers. Indeed, $L/Q\cong \GL(V_{0})/Q_{0}$, and $L_{\psi}\cong \PGL(V_{0})_{\psi_{0}}$ (the centralizer of $\psi_{0}$ in $\PGL(V_{0})$). Therefore we reduce to the case discussed in \S\ref{orbitA} for the group $\PGL(V_{0})$.

\sss{Case (2)}
Take $\bQ_{0}$ to be the standard Iwahori subgroup of $G(F_{0})$ (i.e., $Q\subset L$ is a Borel subgroup). Fix a decomposition
\begin{equation*}
V_{i}=\ell_{i}\op V^{0}_{i} ,\quad i\in\Zm
\end{equation*}
where $\dim\ell_{i}=1$. Viewing $\psi\in V_{\bP}^{*}$ as a collection of maps $V_{i}\to V_{i-1}$, let it restrict to an isomorphism $\ell_{i}\isom \ell_{i-1}$ and be zero on $V^{0}_{i}$.

\sss{} We check that in the above situation, $B_{\psi}$ acts on $L/Q$ with an open orbit with finite stabilizers. 
We have $L/Q=\prod_{i\in\Zm}\Fl(V_{i})$. We also have $L_{\psi}\cong \prod_{i\in\Zm}\GL(V^{0}_{i})$ (an extra factor of $\Gm$ acting on all the lines $\ell_{i}$ gets cancelled after dividing by scalar matrices). Therefore $B_{\psi}\cong\prod_{i\in\Zm}B_{\psi,i}$ where $B_{\psi, i}\subset \GL(V^{0}_{i})$ is a Borel subgroup. The required property of the $B_{\psi}$-action on $L/Q$ follows from the same property for the $B_{\psi,i}$-action on $\Fl(V_{i})$, which is checked in case (2) of \S\ref{orbitA}.

\begin{remark} We expect case (2) to correspond to hypergeometric local systems with slope $1/m$ at $\infty$ and unipotent monodromy at $0$. Rigid automorphic data corresponding to hypergeometric local systems are constructed in the work of Kamgarpour and Yi \cite{KY20}.
\end{remark}

\subsection{Convention} In the exceptional cases, we always assume $G$ is of {\em adjoint type}. We will indicate the type of $\bP_{\infty}$ by coloring the affine Dynkin diagram of $G(F_{\infty})$: the white nodes are simple roots of $L=L_{\bP}$, and the black nodes are simple roots not contained in $L$. 

When we describe $L$ and $V_{\bP}^{*}$, we will use $V_{i},V'_{i}, W_{i}, F_{i}$, etc. to indicate vector spaces of dimension $i$ over $k$.

\subsection{Type ${}^{3}D_{4}$} 

Type of $\bP_{\infty}$:
 \[\xymatrix{\circ \ar@{-}[r] & \circ \ar@3{<-}[r] & \bu}\]

In this case $m=3$. We have $L\cong \PGL(V_{3})$ acting on $V^{*}_{\bP}\cong\Sym^{3}(V_{3})\ot \det(V_{3})^{-1}$. 

Choose a basis $\{e_{1},e_{2},e_{3}\}$ for $V_{3}$. Take $\psi=(e_{1}e_{2}e_{3})\ot (e_{1}\wedge e_{2}\wedge e_{3})^{-1}\in V^{*}_{\bP}$. Then $L_{\psi}$ is the normalizer of the diagonal torus in $L$ with respect to the basis $\{e_{i}\}$. 

Potential choices of $Q$:   take $Q$ to be a maximal proper parabolic subgroup of $L$ so that $L/Q=\PP(V_{3})$ or $\PP^{\vee}(V_{3})$. Then $B_{\psi}=L_{\psi}^{\circ}$ acts on $L/Q$ with an open  free orbit.


\subsection{Type $F_{4}$}

\subsubsection{} Type of $\bP_{\infty}$:
\[\xymatrix{\circ \ar@{-}[r] &\circ \ar@{-}[r] & \bu \ar@{=>}[r] & \circ \ar@{-}[r] &\circ}\]


In this case $m=3$. We have $L\cong (\SL(V_{3})\times \SL(V'_{3}))/\D\mu_{3}$ (modulo diagonal center) acting on $V^{*}_{\bP}=\Sym^{2}(V_{3})\ot V'_{3}$. The factor $\SL(V_{3})$ has short roots of $G$ and $\SL(V'_{3})$  has long roots.   

Choose a basis $\{x_{1},x_{2},x_{3}\}$ of $V_{3}$, and a basis $\{e_{1},e_{2},e_{3}\}$ of $V'_{3}$. Take $\psi=x_{1}^{2}\ot e_{1}+x_{2}^{2}\ot e_{2}+x_{3}^{2}\ot e_{3}$. Then $L_{\psi}^{\circ}$ is a $2$-dimensional torus. The projection $L_{\psi}^{\circ}\to \PGL(V_{3})$ is an isomorphism onto the diagonal torus of $\PGL(V_{3})$ with respect to the basis $\{x_{i}\}$. The other projection $L_{\psi}^{\circ}\to \PGL(V'_{3})$ is a $\mu_{2}^{2}$-cover of the maximal torus of $\PGL(V'_{3})$ with respect to the basis $\{e_{i}\}$.

Potential choices of $Q$: 
\begin{enumerate}
\item $L/Q=\PP(V_{3})$ or $\PP^{\vee}(V_{3})$. In these cases $B_{\psi}=L^{\circ}_{\psi}$ acts on $L/Q$ with an  open free orbit.
\item $L/Q=\PP(V'_{3})$ or $\PP^{\vee}(V'_{3})$. In these cases $B_{\psi}=L^{\circ}_{\psi}$ acts on $L/Q$ with an  open orbit with stabilizer $\mu^{2}_{2}$.
\end{enumerate}

\subsubsection{} Type of $\bP_{\infty}$:
\[\xymatrix{\circ \ar@{-}[r] &\bu \ar@{-}[r] & \circ \ar@{=>}[r] & \circ \ar@{-}[r] &\circ}\]

In this case $m=2$. We have $L\cong (\SL(V_{2})\times \Sp(V_{6}))/\D\mu_{2}$ ($V_{6}$ is a symplectic space of dimension $6$,  $\mu_{2}$ embeds diagonally  into the center of each factor), and $V^{*}_{\bP}\cong V_{2}\ot \ov\wedge^{3}(V_{6})$. Here $\ov\wedge^{3}(V_{6})$ is the cokernel $V_{6}\to \wedge^{3}V_{6}$ given by wedging with the symplectic form on $V_{6}$, so $\dim \ov\wedge^{3}(V_{6})=14$.

Choose a basis $\{e_{1},e_{2}\}$ for $V_{2}$; choose a Lagrangian splitting $V_{6}=W_{3}\op W^{*}_{3}$. Let $\th$ be a volume form on $W_{3}$ (i.e., $\th\in \wedge^{3}W_{3}, \th\ne0$) and $\th^{*}$ be the dual volume form on $W^{*}_{3}$. Consider $\psi=e_{1}\ot \th+e_{2}\ot\th^{*}$. Then $L^{\circ}_{\psi}\cong \GL(W_{3})/\mu_{2}$, where $\GL(W_{3})\incl \SL(V_{2})\times \Sp(V_{6})$ is the following embedding. The projection $\GL(W_{3})\to \Sp(V_{6})$ identifies $\GL(W_{3})$ as the Siegel Levi preserving the splitting $W_{3}\op W^{*}_{3}$. The projection $\GL(W_{3})\to \SL(V_{2})$ is the composition of $\det: \GL(W_{3})\to \Gm$ and $\Gm\to \SL(V_{2})$ given by $t\mapsto \textup{diag}(t^{-1},t)$ in the basis $\{e_{1},e_{2}\}$.

Potential choices of $Q$:
\begin{enumerate}
\item $L/Q\cong \PP(V_{2})\times \PP(V_{6})$. Then $B_{\psi}$ acts on $L/Q$ with an open free orbit.
\item $L/Q$ is the space of Lagrangians in $V_{6}$. The open $L_{\psi}^{\circ}$-orbit of $L/Q$ is isomorphic to the space of non-degenerate quadratic forms on $W_{3}$, which has an open $B_{\psi}$-orbit with stabilizer $\mu_{2}^{2}$.
\end{enumerate}

\subsection{Type $E_{6}$}

\subsubsection{} Type of $\bP_{\infty}$:
\[\xymatrix{\circ\ar@{-}[r] & \circ\ar@{-}[r]& \bu\ar@{-}[r]\ar@{-}[d]& \circ\ar@{-}[r]&  \circ\\
& &  \circ\ar@{-}[d] \\ && \circ}\]


In this case $m=3$. We have $L\cong \SL(V_{3})\times \SL(V'_{3})\times \SL(V''_{3})/(\mu_{3}\times\mu_{3}\times\mu_{3})^{\prod=1}$ acting on $V^{*}_{\bP}\cong V_{3}\ot V'_{3}\ot V''_{3}$.  Here $(\mu_{3}\times\mu_{3}\times\mu_{3})^{\prod=1}$ is the subgroup of the central $\mu_{3}$'s with product $1$.

Choose bases $\{x_{i}\}_{1\le i\le 3}$ for $V_{3}$,  $\{x'_{i}\}_{1\le i\le 3}$ for $V'_{3}$ and $\{x''_{i}\}_{1\le i\le 3}$ for $V''_{3}$. Take $\psi=x_{1}\ot x'_{1}\ot x''_{1}+x_{2}\ot x'_{2}\ot x''_{2}+x_{3}\ot x'_{3}\ot x''_{3}\in V^{*}_{\bP}$. Then $L_{\psi}^{\circ}$ is a $4$-dimensional torus. The projection $L^{\circ}_{\psi}\to\PGL(V_{3})\times \PGL(V'_{3})$ is an isomorphism onto the diagonal torus in the target with respect to the chosen bases. Same for the other two projections.

Potential choices of $Q$: $L/Q$ can be $\PP(V_{3})\times \PP(V'_{3})$, or changing $\PP$ to $\PP^{\vee}$, and changing $(V_{3}, V'_{3})$ to other pairs $(V'_{3}, V''_{3})$ or $(V''_{3}, V_{3})$. The action of $B_{\psi}=L_{\psi}^{\circ}$ on  $L/Q$ has an open free orbit in all cases.

\subsubsection{} Type of $\bP_{\infty}$:
\[\xymatrix{\circ\ar@{-}[r] & \circ\ar@{-}[r]& \circ\ar@{-}[r]\ar@{-}[d]& \bu\ar@{-}[r]&  \circ\\
& &  \circ\ar@{-}[d] \\ && \circ}\]
and two other cases by symmetry.

In this case $m=2$. We have $L\cong \SL(V_{6})\times\SL(V_{2})/(\mu_{3}\times1)\Delta\mu_{2}$
 acting on $V^{*}_{\bP}\cong \wedge^{3}(V_{6})\ot V_{2}$.
 
Choose a basis $\{e_{1},e_{2}\}$ for $V_{2}$; choose a splitting $V_{6}=W_{3}\op W'_{3}$ into two $3$-dimensional spaces. Let $\th$ (resp. $\th'$) be a volume form on $W_{3}$ (resp. $W'_{3}$). Consider $\psi=\th\ot e_{1}+\th'\ot e_{2}$. The projection $L^{\circ}_{\psi}\to\PGL(V_{6})$ identifies $L^{\circ}_{\psi}$ with the Levi subgroup of $\PGL(V_{6})$ preserving the splitting  $W_{3}\op W'_{3}$. 

Potential choices of $Q$: $L/Q$ is a partial flag variety of $\PGL(V_{6})$ of type $(3,2,1)$ (dimensions of associated graded of the partial flag, in any order). Now $B_{\psi}$ is a Borel of $L^{\circ}_{\psi}$, which projects isomorphically to a Levi of $\PGL(V_{6})$ of type $(3,3)$.  The situation $B_{\psi}\bs L/Q$ appears as a special case of \S\ref{orbitA}(4),  from which we know that $B_{\psi}$ acts on $L/Q$ with an open free orbit.


\subsection{Type ${}^{2}E_{6}$}

\subsubsection{} Type of $\bP_{\infty}$:
\[\xymatrix{\circ \ar@{-}[r] &\circ \ar@{-}[r] &\circ \ar@{<=}[r] &\bu  \ar@{-}[r] &\circ}\]

In this case $m=4$. Then $L=(\SL(V_{4})\times \SL(V_{2}))/\G$ acting on $V^{*}_{\bP}\cong \Sym^{2}(V_{4})\ot V_{2}$, where $\G$ is the central subgroup $\{(x,x^{2})\in \mu_{4}\times\mu_{2}\}$.

Choose a basis $\{e_{1},e_{2}\}$ for $V_{2}$ and a basis $\{x_{1},y_{1},x_{2},y_{2}\}$ for $V_{4}$. Take $\psi=x_{1}y_{1}\ot e_{1}+x_{2}y_{2}\ot e_{2}$. Then the projection $L^{\circ}_{\psi}\to \PGL(V_{4})$ is an isomorphism onto the diagonal torus with respect to the basis $\{x_{1},y_{1},x_{2},y_{2}\}$. 

Potential choices of $Q$: $L/Q=\PP(V_{4})$ or $\PP^{\vee}(V_{4})$. It is clear that in both cases $B_{\psi}=L^{\circ}_{\psi}$ acts on $L/Q$ with an open free orbit.

\subsubsection{} Type of $\bP_{\infty}$:
\[\xymatrix{\circ \ar@{-}[r] &\bu \ar@{-}[r] &\circ \ar@{<=}[r] &\circ  \ar@{-}[r] &\circ}\]

In this case $m=4$. We have $L=(\Spin(V_{7})\times \SL(V_{2}))/\D\mu_{2}$ acting on $V^{*}_{\bP}=\D_{8}\ot V_{2}$ where $\D_{8}$ is the $8$-dimensional spin representation of $\Spin(V_{7})$. 

We have an embedding $\ph: \Spin(V_{7})/P_{3}\incl \PP\D_{8}$, where $\Spin(V_{7})/P_{3}$ classifies maximal isotropic subspaces in $V_{7}$.  Choose a splitting $V_{7}=W_{3}\op W'_{3}\op \j{x_{0}}$, where $W_{3}$ and $W'_{3}$ are maximal isotropic and paired perfectly to each other and both orthogonal to $x_{0}$. Let $\{e_{1},e_{2}\}$ be a basis for $V_{2}$. Take $\psi=\wt\ph([W_{3}])\ot e_{1}+\wt\ph([W'_{3}])\ot e_{2}$ (here $\wt\ph([W_{3}])\in \D_{8}$ is a lifting of $\ph([W_{3}])$, unique up to a scalar;  same for $\wt\ph([W'_{3}])$).  The projection $L^{\circ}_{\psi}\to \SO(V_{7})$ is an isomorphism onto the Levi subgroup isomorphic to $\GL(W_{3})$ that stabilizes the splitting $V_{7}=W_{3}\op W'_{3}\op \j{x_{0}}$. The  projection $L^{\circ}_{\psi}\cong\GL(W_{3})\to \PGL(V_{2})$ factors through the determinant and maps onto the diagonal torus with respect to the basis $\{e_{1},e_{2}\}$.

Potential choices of $Q$: 
\begin{enumerate}
\item $L/Q$ is the partial flag variety of $\Spin(V_{7})$ classifying maximal isotropic subspaces of $V_{7}$. The situation $B_{\psi}\bs L/Q$ appears as a special case of \S\ref{orbitB}(1), and we have checked that $B_{\psi}$ acts on $L/Q$ with an open orbit with finite stabilizers.
\item $L/Q=\QQ(V_{7})\times \PP(V_{2})$, where $\QQ(V_{7}) \subset \PP(V_{7})$ is the quadric. We identify $B_{\psi}\bs L/Q$ with $B(W_{3})\bs \QQ(V_{7})$, where $B(W_{3})\subset \SL(W_{3})$ is a Borel subgroup. It is then easy to check that $B(W_{3})$ acts on $\QQ(V_{7})$ with an open free orbit.
\end{enumerate}

\subsection{Type $E_{7}$}

\subsubsection{} 
Type of $\bP_{\infty}$:
\[\xymatrix{\circ\ar@{-}[r] & \circ\ar@{-}[r]& \bu\ar@{-}[r] & \circ\ar@{-}[r]\ar@{-}[d]&  \circ\ar@{-}[r]& \circ\ar@{-}[r] & \circ\\
& & &  \circ }\]
and another case by symmetry.

In this case $m=3$. We have $L\cong \SL(V_{6})\times \SL(V_{3})/(\mu_{2}\times 1)\Delta\mu_{3}$ acting on $V^{*}_{\bP}\cong \wedge^{2}(V_{6})\ot V_{3}$. 

Choose a basis $\{e_{1},e_{2},e_{3}\}$ for $V_{3}$; choose a splitting $V_{6}=W_{1}\op W_{2}\op W_{3}$ into three $2$-dimensional subspaces. Let $\th_{i}$ be a volume form on $W_{i}$. Consider the element $\psi=\th_{1}\ot e_{1}+\th_{2}\ot e_{2}+\th_{3}\ot e_{3}$. The projection $L_{\psi}^{\circ}\to\PGL(V_{6})$ is an isomorphism onto the Levi of $\SL(V_{6})$ stabilizing the splitting $V_{6}=W_{1}\op W_{2}\op W_{3}$, and the projection $L^{\circ}_{\psi}\to \PGL(V_{3})$ has image equal to the diagonal torus with respect to the basis $\{e_{i}\}$. 

Potential choices of $Q$: $L/Q$ is the partial flag variety of $\PGL(V_{6})$ with associated graded dimensions $(4,2)$ (in any order). The situation $B_{\psi}\bs L/Q$ appears in the example \S\ref{orbitA}(8),  from which we know that $B_{\psi}$ acts on $L/Q$ with an open free orbit.

\subsubsection{} Type of $\bP_{\infty}$:
\[\xymatrix{\circ\ar@{-}[r] & \bu\ar@{-}[r]& \circ\ar@{-}[r] & \circ\ar@{-}[r]\ar@{-}[d]&  \circ\ar@{-}[r]& \circ\ar@{-}[r] & \circ\\
& & &  \circ}\]
and another case by symmetry.

In this case $m=2$. Then $L$ is isogenous to  $(\Spin^{+}(V_{12})\times \SL(V_{2}))/\D\mu_{2}$ acting on $V^{*}_{\bP}\cong \D^{+}_{32}\ot V_{2}$, where $\Spin^{+}(V_{12})$ is one of the half-spin quotient of $\Spin(V_{12})$ acting on its half-spin representation $\D^{+}_{32}$.

Choose a basis $\{e_{1},e_{2}\}$ for $V_{2}$. There is an embedding $\ph: \Spin(V_{12})/P_{6}\incl \PP\D^{+}_{32}$, where $\Spin(V_{12})/P_{6}$ is the partial flag variety of one of the two families of Lagrangian subspaces in $V_{12}$. Fix a splitting $V_{12}=W_{6}\op W'_{6}$ into Lagrangians. Take $\psi=\wt\ph([W_{6}])\ot e_{1}+\wt\ph([W'_{6}])\ot e_{2}$ (here $\wt\ph([W_{6}])$ is a lifting of $\ph([W_{6}])$ to $\D_{32}$, up to scalar; same for $\wt\ph([W_{6}])$). The projection $L_{\psi}^{\circ}\to \PSO(V_{12})$ is an isomorphism onto the Siegel Levi stabilizing the splitting $V_{12}=W_{6}\op W'_{6}$, so  $L_{\psi}^{\circ}\cong \GL(W_{6})/\mu_{2}$. The projection $L_{\psi}^{\circ}\cong\GL(W_{6})/\mu_{2}\to \PGL(V_{2})$ factors through the determinant and maps onto the diagonal torus of $\PGL(V_{2})$ with respect to the basis $\{e_{1},e_{2}\}$. Note that $\dim B_{\psi}=21$.

Potential choices of $Q$:
\begin{enumerate}
\item $L/Q=\SO(V_{12})/P_{3}$  is the partial flag variety classifying isotropic $F_{3}\subset V_{12}$. The situation $B_{\psi}\bs L/Q$  has been analyzed in \S\ref{orbitD}(3).

\item $L/Q=\SO(V_{12})/P_{5}\times \PP(V_{2})$, where the first factor classifies isotropic $F_{5}\subset V_{12}$. We check the open orbit condition as follows. We first reduce to study the action of $B_{\psi}^{1}$ (a Borel subgroup of $\SL(W_{6})$) on $Y=\SO(V_{12})/P_{5}$. Note that $Y$ classifies a pair of Lagrangians $U_{6}, U'_{6}\subset V_{12}$ such that $\dim(U_{6}\cap U'_{6})=5$. We may assume $U_{6}$ is conjugate to $W_{6}$. There is an open subset $Y'\subset Y$ classifying those $(U_{6},U'_{6})$ such that $U_{6}$ is the graph of a skew-symmetric map $a:W'_{6}\to W_{6}$. We may identify $Y'$ with $\wedge^{2}(W_{6})\times \PP(W_{6})$ (the choice of $U'_{6}$ is the same as choosing a hyperplane in $U_{6}$, or in $W'_{6}$). The situation of $B^{1}_{\psi}$ acting on $Y'$ is essentially the same as case (1) of \S\ref{openB}.

\item $L/Q=\SO(V_{12})/P_{1,6}\times\PP(V_{2})$, where the first factor classifies isotropic $F_{1}\subset F_{6}\subset V_{12}$ for $F_{6}$ a Lagrangian in the same connected component of $W_{6}$. The same argument as in the previous case reduces to the action of $B^{1}_{\psi}$ on $\wedge^{2}W_{6}\times\PP^{\vee}(W_{6})$, which is essentially the same as case (1) of \S\ref{openB}.
\end{enumerate}

\subsubsection{} 
Type of $\bP_{\infty}$:
\[\xymatrix{\circ\ar@{-}[r] & \circ\ar@{-}[r]& \circ\ar@{-}[r] & \bu\ar@{-}[r]\ar@{-}[d]&  \circ\ar@{-}[r]& \circ\ar@{-}[r] & \circ\\
& & &  \circ }\]


In this case $m=4$. We have $L$ isogenous to $(\SL(V_{4})\times \SL(V'_{4})\times \SL(V_{2}))/\G$ acting on $V^{*}_{\bP}\cong V^{*}_{4}\ot V'_{4}\ot V_{2}$, where $\G$ is the central subgroup $\{(x,y,z)\in \mu_{4}\times\mu_{4}\times\mu_{2}|x^{-1}yz=1\}$ of $L$.

Fix splittings $V_{4}=X_{2}\op Y_{2}$ and $V'_{4}=X'_{2}\op Y'_{2}$ into planes. Let $\ph_{X}: X_{2}\isom X'_{2}$ and $\ph_{Y}: Y_{2}\isom Y'_{2}$ be isomorphisms. Let $\wt\ph_{X}$ be the composition $V_{4}\surj X_{2}\xr{\ph_{X}} X'_{2}\incl V'_{4}$, viewed as an element in $V^{*}_{4}\ot V'_{4}$. Similarly define $\wt\ph_{Y}$.  Let $\{e_{1},e_{2}\}$ be a basis of $V_{2}$. Take $\psi=\wt\ph_{X}\ot e_{1}+\wt\ph_{Y}\ot e_{2}$. Then $L^{\circ}_{\psi}$ is the image of $i:S(\GL(X)\times \GL(Y))\times\Gm\to \SL(V_{4})\times \SL(V'_{4})\times \SL(V_{2})\surj L$. Here $i$ sends $(g_{X},g_{Y}, \l)$ (with $\det(g_{X})\det(g_{Y})=1$) to the triple $(g_{X}\op g_{Y}, \l^{-1}\ph_{X}g_{X}\ph_{X}^{-1}\op \l \ph_{Y}g_{Y}\ph_{Y}^{-1}, \textup{diag}(\l,\l^{-1}))$.  We have $\dim B_{\psi}=6$.

Potential choices of $Q$:
\begin{enumerate}
\item $L/Q=\SL(V_{4})/Q_{1}\times \PP(V_{2})$, where $\SL(V_{4})/Q_{1}$ is a partial flag variety of $\SL(V_{4})$ with associated graded dimensions $(2,1,1)$ (in any order). The situation $B_{\psi}\bs L/Q$ appears as a special case of \S\ref{orbitA}(4), and we have checked that $B_{\psi}$ acts on $L/Q$ with an open orbit with finite stabilizers.
\item $L/Q=\SL(V'_{4})/Q'_{1}\times \PP(V_{2})$, where $\SL(V'_{4})/Q'_{1}$ is the partial flag variety of $\SL(V'_{4})$ with associated graded dimensions $(2,1,1)$ (in any order). Again the situation $B_{\psi}\bs L/Q$ appears as a special case of \S\ref{orbitA}(4),  from which we know that $B_{\psi}$ acts on $L/Q$ with an open orbit with finite stabilizers.
\end{enumerate}

\subsection{Type $E_{8}$}    
Type of $\bP_{\infty}$:
\[\xymatrix{\circ\ar@{-}[r] & \circ\ar@{-}[r]& \circ\ar@{-}[r]\ar@{-}[d]& \bullet\ar@{-}[r]& \circ\ar@{-}[r]& \circ\ar@{-}[r]& \circ\ar@{-}[r]& \circ\\
& &  \circ}\]

In this case $m=5$. We have $L\cong (\SL(V_{5})\times \SL(V'_{5}))/\mu_{5}$ acting on $V^{*}_{\bP}=\wedge^{2}(V_{5})\ot V'_{5}$, here the embedding $\mu_{5}\incl \SL(V_{5})\times \SL(V'_{5})$ is $z\mapsto(z^{2}\id_{V_{5}},z\id_{V_{5}'})$.  

Choose a basis $\{x_{i}\}_{1\le i\le 5}$ of $V_{5}$, and  a basis $\{e_{i}\}_{1\le i\le 5}$ of $V'_{5}$.  Take $\psi=\sum_{i\in\ZZ/5\ZZ}x_{i-1}\wedge x_{i+1}\ot e_{i}$. Then the projection $L_{\psi}^{\circ}\to \PGL(V_{5})$ is an isomorphism onto the diagonal torus of $\PGL(V_{5})$ with respect to the basis $\{x_{i}\}$. Same for the other projection $L_{\psi}^{\circ}\to \PGL(V'_{5})$.

Potential choices of $Q$: $L/Q=\PP(V_{5}), \PP^{\vee}(V_{5}), \PP(V'_{5})$ or $\PP^{\vee}(V'_{5})$. In all these cases  $B_{\psi}=L^{\circ}_{\psi}$ acts on $L/Q$ with an open free orbit.

\appendix

\section{Factorizable module categories}

In this appendix we define and classify semisimple factorizable module categories over a neutral Tannakian category with coefficients. We will apply the classification result here to the category of semisimple perverse sheaves in the automorphic category $\cD(\psi,\chi)$ in \S\ref{s:sh}.  The materials presented here are an elementary case of the theory of chiral homology that does not involve the language of $\infty$-categories, so that we give self-contained proofs. 

\subsection{Notations} The notations used in the appendix differ from the ones in the main body of the paper. 

Let $L$ be an algebraically closed field of characteristic zero. All abelian categories in this subsection will be $L$-linear.  Let $\Vect$ denote the category of finite-dimensional vector spaces over $L$.

Let $\cP$ be a semisimple $L$-linear abelian category such that $\End_{\cP}(X)=L$ for each simple object $X\in \cP$. Let $\Irr(\cP)$ denote the set of isomorphism classes of simple objects in $\cP$. Objects in $\cP$ will be denoted $X,Y,\cdots$.

Let $(\cR,\ot)$ and $(\cC,\ot)$ be semisimple rigid tensor category over $L$.  Objects in $\cR$ and $\cC$ will be denoted $V,W,\cdots$.

\subsection{Factorizable module categories with coefficients}\label{as:define fact mod} We say that $\cP$ is a factorizable $\cR$-module category with coefficients in $\cC$, if for every finite set $I$ there is a bi-exact functor $\cR^{\boxtimes I}\times \cP\to \cC^{\boxtimes I}\boxtimes \cP$ (Deligne's tensor product)
\begin{equation*}
(V, X)\mapsto V\star_{I}X, \textup{ for }V\in \cR^{\boxtimes I}, X\in \cP
\end{equation*}
with the following extra structures:
\begin{enumerate}
\item When $I=\vn$, we understand that $\cR^{\bt \vn}\cong \Vect$, and the action of $\cR^{\bt \vn}$ is the usual action of $\Vect$ on $\cP$ by tensoring.
\item Any map of finite sets $\ph: I\to J$ induces $\ph_{\cR}: \cR^{\bt I}\to \cR^{\bt J}$ sending $\bt_{i\in I}V_{i}$ to $\bt_{j\in J}(\ot_{i\mapsto j}V_{i})$. Similarly it induces $\ph_{\cC}: \cC^{\bt I}\to \cC^{\bt J}$. Then there is a functorial isomorphism $\ph_{\cR}(V)\star_{J}X\cong (\ph_{\cC}\bt\id_{\cP})(V\star_{I}X)\in \cC^{\bt J}\bt \cP$, for $V\in \cR^{\bt I}$ and $X\in \cP$.
\item If $I=I'\sqcup I''$ is a partition of $I$ then there is a functorial isomorphism $V'\star_{I'}(V''\star_{I''}X)\cong (V'\bt V'')\star_{I}X$ for $V'\in \cR^{\bt I'}, V''\in \cR^{\bt I''}$ and $X\in \cP$. Here on the left side, when $V'$ acts on $V''\star_{I''}X\in \cC^{\bt I''}\bt \cP$, it only acts on the $\cP$-factor.
\end{enumerate}
These structures have to satisfy the usual compatibilities: composition of maps in (1), refinement of partitions in (2), and the compatibility of (1) and (2) for maps $\ph'\sqcup\ph'': I'\sqcup I''\to J'\sqcup J''$. We do not spell out the details. 

For $X'\in \cC^{\bt I}\boxtimes \cP$ and $Y\in \cP$,  let $\uHom(X',Y)$ and $\uHom(Y,X')$ denote the inner homs taking values in $\cC^{\bt I}$. For example, $\uHom(Y,X')$ is characterized by having an isomorphism $\Hom_{\cC^{\bt I}}(C, \uHom(Y,X'))\cong \Hom_{\cC^{\bt I}\bt \cP}(C\bt Y, X')$ functorial in $C\in \cC^{\bt I}$. Then the axioms imply that for $X,Y\in \cP$, $V\in \cR^{\bt I}$, there is a functorial isomorphism
\begin{equation}\label{adj Vdual}
\uHom(V\star_{I} X,Y)\cong \uHom(X, V^{\vee}\star_{I} Y) \in \cC^{\bt I}.
\end{equation}

For $I$ equal to a singleton set, we denote $V\star_{I}X$ simply by $V\star X$. The factorizable $\cR$-module structure on $\cP$ in particular gives an $E_{2}$-action $V\mapsto V\star(-)$ of $\cR$ on $\cP$.
 
\begin{exam}\label{ex:fact mod}
\begin{enumerate}
\item Let $r: \cR\to \cC$ be a tensor functor, which extends to $r^{I}: \cR^{\boxtimes I}\to \cC^{\boxtimes I}$. Then for any semisimple abelian category $\cP$, $V\star_{I}X:=r^{I}(V)\boxtimes X$ gives $\cP$ the structure of a factorizable $\cR$-module category with coefficients in $\cC$. Such $\cP$ are a categorical analogue of an eigenspace under a commutative algebra action, therefore we say that $\cP$ is eigen with eigenvalue $r:\cR\to \cC$.

\item A factorizable $\cR$-module category with coefficients in $\cC=\Vect$ is the same as an $E_{2}$-module category for $\cR$. 

\item We may combine the above two examples. Let $\cC'$ be another semisimple rigid tensor category and $r:\cR\to \cC\boxtimes\cC'$ be a tensor functor. Let $\cP$ be an $E_{2}$-module category for $\cC'$. Then $\cP$ also carries the structure of a factorizable $\cR$-module category with coefficients in $\cC$ as follows. For $V\in \cR^{\boxtimes I}$ and $X\in\cP$, let $V\star_{I}X$ be the image of $r^{I}(V)\boxtimes X$ under the functor $\id\boxtimes a'_{I}: \cC^{\boxtimes I}\boxtimes \cC'^{\boxtimes I}\boxtimes \cP\to \cC^{\boxtimes I}\boxtimes \cP$, where $a'_{I}$ is the action map of $\cC'^{\boxtimes I}$ on $\cP$. We say that $\cP$ is {\em inflated} from the $E_{2}$-action of $\cC'$ on $\cP$.

\item\label{inflate} As a special case of the above example, consider the case $\cR=\Rep(H)$ and  $\cC=\Rep(M)$ for reductive groups $H$ and $M$ over $L$. Let $\r: M\to H$ be a homomorphism and $H_{\r}$ be the centralizer of $\r(M)$, and let $\cC'=\Rep(H_{\r})$. Then we have the restriction functor $r: \cR=\Rep(H)\to \Rep(M\times H_{\r})\cong \cC\boxtimes\cC'$. For any $E_{2}$-module category $\cP$ under $\Rep(H_{\r})$, the construction in (3) gives a factorizable $\Rep(H)$-module category with coefficients in $\cC=\Rep(M)$, {\em inflated} from the $E_{2}$-action of $\Rep(H_{\r})$ on $\cP$.
\end{enumerate}
\end{exam}

\subsection{Indecomposable module categories} 
We call a factorizable $\cR$-module category $\cP$ with coefficients in $\cC$ {\em indecomposable} if $\cP$ is not the direct sum of two nonzero factorizable $\cR$-module categories with coefficients in $\cC$.

\begin{lemma}\label{l:indecomp}
Let $\cP$ be a semisimple abelian category over $L$ with finite set $\Irr(\cP)$ of simple objects up to isomorphism. Suppose $\cP$ is equipped with the structure of a factorizable $\cR$-module categories with coefficients in $\cC$.
\begin{enumerate}
\item For $X,Y\in \Irr(\cP)$, define $X\sim Y$ if for some $V\in \cR$, $V\star X$ contains $C\boxtimes Y$ as a direct summand, for some nonzero object $C\in \cC$. Then $\sim$ is an equivalence relation.
\item\label{P decomp} For each equivalence class $s\in \Irr(\cP)/\sim$, let $\cP_{s}$ be the full subcategory whose objects are direct sums of objects in $s$. Then $\cP_{s}$ is an indecomposable  factorizable $\cR$-module categories with coefficients in $\cC$, and $\cP\cong\op_{s\in \Irr(\cP)/\sim}\cP_{s}$ as factorizable $\cR$-module categories.
\end{enumerate} 
\end{lemma}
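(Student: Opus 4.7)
The plan is to verify (1) that $\sim$ is an equivalence relation on $\Irr(\cP)$, and then to deduce (2) by showing that each equivalence class is stable under all $\star_I$ actions, after which indecomposability of each $\cP_s$ is immediate.

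For reflexivity, I would apply the compatibility axiom to the unique map $\varphi\colon\varnothing\to\{*\}$; since $\varphi_{\cR}(L)=\one_\cR$ and $\varphi_\cC(L)=\one_\cC$, this identifies $\one_\cR\star X$ with $\one_\cC\boxtimes X$, so $X\sim X$. For symmetry, suppose $V\star X$ contains $C\boxtimes Y$ as a direct summand with $C\neq 0$. Using $\End_\cP(Y)=L$ together with semisimplicity, the inner hom $\uHom(V\star X,Y)\in\cC$ acquires a summand $C^\vee\neq 0$; by \eqref{adj Vdual} the same $C^\vee$ must appear in $\uHom(X,V^\vee\star Y)$, which forces $V^\vee\star Y$ to contain a summand $C'\boxtimes X$ with $C'\neq 0$, so $Y\sim X$. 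For transitivity, given $X\sim Y$ via $V$ and $Y\sim Z$ via $W$, the factorization axiom gives
\begin{equation*}
W\star(V\star X)\cong(W\boxtimes V)\star_{\{1,2\}}X,
\end{equation*}
and since $W\star$ only touches the $\cP$-factor of $V\star X=(C\boxtimes Y)\oplus\cdots$, the left side contains $C\boxtimes C'\boxtimes Z$ as a summand. Applying the compatibility axiom to $\{1,2\}\to\{*\}$ transports this to a summand $(C\otimes C')\boxtimes Z$ of $(W\otimes V)\star X$. The crucial input is that tensor products of nonzero objects in the semisimple rigid tensor category $\cC$ remain nonzero (for simple summands $D\subseteq C$ and $D'\subseteq C'$ one has $\one_\cC\subseteq D\otimes D^\vee\otimes D'\otimes(D')^\vee$ by iterated coevaluation, so $D\otimes D'\ne 0$). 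Hence $C\otimes C'\neq 0$ and $X\sim Z$.

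The same argument shows that each $\cP_s$ is preserved by every $\star_I$: decomposing $V\star_I X=\bigoplus D_j\boxtimes Y_j$ with $D_j\ne 0$ and passing through the map $I\to\{*\}$ yields summands with nonzero coefficient in $\cC$, so $X\sim Y_j$ for each $j$. Therefore $\cP=\bigoplus_s\cP_s$ as factorizable $\cR$-module categories with coefficients in $\cC$. Indecomposability of each $\cP_s$ is then immediate: a further splitting $\cP_s=\cP_s^1\oplus\cP_s^2$ with both pieces nonzero and preserved by the $\star_I$ actions would imply that for $X\in\cP_s^1$ the object $V\star X$ lies in $\cC\boxtimes\cP_s^1$, so no $Y\in\cP_s^2$ could be $\sim$-related to $X$, contradicting that $s$ is a single equivalence class.

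The main conceptual point, rather than an obstacle, is the interplay between the three structures in play: rigidity of $\cR$ is used only in the symmetry step via \eqref{adj Vdual}, whereas the semisimple rigidity of $\cC$ (nonvanishing of tensor products) is essential in transitivity and in the stability of $\cP_s$ under $\star_I$. Everything else is a routine unwinding of the factorization and compatibility axioms of \S\ref{as:define fact mod}.
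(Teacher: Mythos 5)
Your proof is correct and follows the same route as the paper's: the symmetry step via $\uHom$ and the adjunction \eqref{adj Vdual} is identical to the paper's argument, while your transitivity argument (factorization axiom plus nonvanishing of tensor products of nonzero objects in the semisimple rigid $\cC$) and your stability-of-$\cP_s$ argument carefully spell out steps the paper dismisses as ``clear.'' One small imprecision: to conclude $D\otimes D'\neq 0$ from the split inclusion $\one_\cC\hookrightarrow D\otimes D^\vee\otimes D'\otimes(D')^\vee$ you should phrase it as a contradiction---if $D\otimes D'=0$ then also $D\otimes D^\vee\otimes D'\otimes(D')^\vee=0$ after rearranging by the braiding, contradicting that $\one_\cC$ is a nonzero summand.
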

\begin{proof}
(1) The transitivity of $\sim$ is clear from the definition of the $\cR$-action. Taking $V=\one_{\cR}$ the unit in $\cR$, we see that $X\sim X$. To show that $\sim$ is reflexive, suppose $Y$ shows up in $V\star X$, then we have a nonzero map $f: V\star X\to C\bt Y$ for some $C\in \cC$. Rewrite $f$ as a map $h: C^{\vee}\to \uHom(V\star X,Y)$ in $\cC$. Using the adjunction \eqref{adj Vdual}, $h$ corresponds to a nonzero map $h': C^{\vee}\to \uHom(X,V^{\vee}\star Y)$, which corresponds to a nonzero $f': C^{\vee}\bt X\to V^{\vee}\star Y$, showing that $Y\sim X$.

(2) is clear.
%
\end{proof}

\subsection{Classification} 
Now assume $\cR=\Rep(H), \cC=\Rep(M)$ for reductive groups $H$ and $M$ over $L$.   The next result shows that any indecomposable factorizable $\Rep(H)$-module category with coefficients in $\Rep(M)$ must take the form of Example \ref{ex:fact mod}\eqref{inflate}. 

\begin{theorem}\label{th:fact mod} Let $\cP$ be a semisimple abelian category over $L$ with finitely many simple objects. Suppose $\cP$ is equipped with the structure of an indecomposable factorizable $\Rep(H)$-module category with coefficients in $\cC=\Rep(M)$. Then there is a homomorphism $\r: M\to H$, unique up to $H$-conjugation,  such that the factorizable  $\Rep(H)$-module structure on $\cP$ with coefficients in $\Rep(M)$ is inflated from an $E_{2}$-module structure of $\cP$ under $\Rep(H_{\r})$, where $H_{\r}$ is the centralizer of $\Im(\r)$ in $H$ (see Example \ref{ex:fact mod}\eqref{inflate}).
\end{theorem}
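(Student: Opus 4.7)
The strategy is Tannakian: extract a symmetric monoidal functor from the factorization structure, then apply Deligne's reconstruction theorem to obtain $\rho$.

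For each pair of simples $X, Y \in \Irr(\cP)$, define the multiplicity functor $F_{X,Y} \colon \Rep(H) \to \Rep(M)$ by $F_{X,Y}(V) = \uHom_{\cP}(Y, V \star X)$, so that semisimplicity gives the canonical decomposition $V \star X \cong \bigoplus_{Y \in \Irr(\cP)} F_{X,Y}(V) \boxtimes Y$, with $F_{X,Y}(\mathbf{1}) = \delta_{X,Y}\cdot\mathbf{1}_{\Rep(M)}$ by the unit axiom. Applying the partition axiom to $\{1,2\} = \{1\} \sqcup \{2\}$, followed by the collapse $\phi\colon \{1,2\} \to \{*\}$ (which under $\phi_{\cR}$ and $\phi_{\cC}$ becomes the tensor product), produces a natural isomorphism
\[
F_{X,Z}(V \otimes W) \;\cong\; \bigoplus_{Y \in \Irr(\cP)} F_{Y,Z}(V) \otimes F_{X,Y}(W).
\]
Viewed as a matrix identity, this exhibits $V \mapsto F(V) = (F_{X,Y}(V))_{X,Y}$ (up to transposition) as a symmetric monoidal functor from $\Rep(H)$ into $\End_{\Rep(M)}(\cM)$, where $\cM := \Rep(M)^{\oplus \Irr(\cP)}$ is the free $\Rep(M)$-module category on the simples of $\cP$.

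Equivalently, $F$ defines a $\Rep(H)$-action on $\cM$ commuting with the $\Rep(M)$-structure. By the Tannakian recognition theorem for module categories over neutral Tannakian categories, such an action corresponds to the datum of an $H$-torsor on $\Spec(\Rep(M)) = BM$ together with a compatible action on its $L^{\Irr(\cP)}$-fiber. An $H$-torsor on $BM$ is the same as a conjugacy class of homomorphisms $\rho \colon M \to H$, and the action on the fiber factors through the centralizer $H_\rho = Z_H(\rho(M))$. Indecomposability of $\cP$ (Lemma \ref{l:indecomp}) forces the induced $H_\rho$-action on $\Irr(\cP)$ to be transitive, yielding an $E_2$-module structure of $\Rep(H_\rho)$ on $\cP$. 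Unwinding the reconstruction shows that the original factorizable structure on $\cP$ is the inflation of this $\Rep(H_\rho)$-action along $\rho$ as in Example~\ref{ex:fact mod}(4): decomposing $V|_{M \times H_\rho} = \bigoplus_j C_j \boxtimes W_j$ recovers the matrix coefficient $F_{X,Y}(V)$ as $\bigoplus_{j} C_j \otimes \uHom_\cP(Y, W_j \bullet X)$. Uniqueness of $\rho$ up to $H$-conjugacy is inherited from Tannakian uniqueness.

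\textbf{The main obstacle} is making the Tannakian reconstruction in the second paragraph rigorous: the target monoidal category $\End_{\Rep(M)}(\cM)$ is Morita equivalent (not identical) to $\Rep(M)$ as a $\Rep(M)$-linear monoidal category, and its symmetric monoidal structure is only visible after fixing a ``fiber'' trivialization of $\cM$. The passage from the matrix-valued tensor functor to a genuine $\rho\colon M\to H$ plus an $H_\rho$-action thus requires a careful combination of Morita-theoretic and Tannakian arguments; this is precisely where the choices entering $\rho$ get pinned down modulo $H$-conjugacy, and where indecomposability of $\cP$ intervenes to force a single $\rho$ rather than a disjoint union.
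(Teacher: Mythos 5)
Your proposal takes a genuinely different route from the paper's. You package the factorization data into a matrix-valued braided tensor functor $F\colon\Rep(H)\to\End_{\Rep(M)}(\cM)$ and seek to invoke a Tannakian reconstruction theorem for module categories to produce $\rho$. The paper instead constructs an explicit affine $L$-scheme $\cS$ parametrizing Hopf algebra homomorphisms $\cO_{H}\to\cO_{M}\otimes R$ subject to a boundedness constraint (supported in the finite sets $\G_{V}\subset\Irr(M)$ determined by $\cP$), builds an excursion-operator-style algebra action $\alpha_{X}\colon\om(\cO_{\cS}\star X)\to X$ on every object of $\cP$, and uses reducedness, finiteness of $\Irr(\cP)$, and indecomposability to force this action to factor through a single closed $H$-orbit $\cZ\cong H/H_{\rho}\subset\cS_{\red}$; the $\Rep(H_{\rho})$-action and the inflation property are then read off from $\cO_{\cZ}=L[H]^{H_{\rho}}$.

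The step you flag as the ``main obstacle'' is a genuine gap, and it is exactly the content of the theorem rather than a technical detail. The reconstruction principle you cite --- that a compatible $\Rep(H)$-action on $\cM=\Rep(M)^{\oplus\Irr(\cP)}$ corresponds to an $H$-torsor on $BM$ --- is not a standard theorem in the form you need. What such a principle would naively produce is a homomorphism $\rho_{X}\colon M\to H$ for each simple $X$, one per component of $\cM$; that these all coincide up to conjugacy is where indecomposability must enter via the off-diagonal $F_{X,Y}$, and that is precisely what one has to prove. Your own observation that the diagonal entries $F_{X,X}$ are merely lax tensor, not tensor, confirms that you cannot recover $\rho$ from any single entry of $F$. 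The paper's scheme $\cS$ and its excursion action are the device that replaces the missing black box: $\cO_{\cS}$ is a concrete commutative $H$-algebra acting simultaneously on all of $\cP$, and ``a single $\rho$'' becomes the verifiable statement that the characters $\theta_{X}\colon\cO^{H}_{\cS}\to L$ agree across an indecomposable block. To salvage your route you would essentially need to prove the reconstruction theorem you wish to cite, and the most direct proof of it that I can see is the paper's argument.
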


Combine this theorem with Lemma \ref{l:indecomp}, we can speak about the eigen-decomposition of a decomposable $\cP$. We give a statement that does not mention the fiber functors of $\cR$ and $\cC$ explicitly.

\begin{cor}\label{c:fact mod} Let $\cP$ be a semisimple abelian category over $L$ with finitely many simple objects. Let $\cR$ and $\cC$ be semisimple neutral Tannakian categories over $L$. Suppose $\cP$ is equipped with the structure of a factorizable $\cR$-module category with coefficients in $\cC$. Then there is a well-defined finite set of (isomorphism classes of) tensor functors $\{\s: \cR\to \cC\}_{\s\in \Sigma}$ and a unique decomposition
\begin{equation*}
\cP=\bigoplus_{\s\in \Sigma} \cP_{\s}
\end{equation*}
such that the factorizable  $\cR$-module structure on $\cP_{\s}$ with coefficients in $\cC$ is inflated from an $E_{2}$-action of  $\Rep(\Aut^{\ot}(\s))$ on $\cP_{\s}$.
\end{cor}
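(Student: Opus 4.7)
The plan is to combine Theorem \ref{th:fact mod} with Lemma \ref{l:indecomp} via Tannakian duality. Since $\cR$ and $\cC$ are neutral Tannakian and semisimple over an algebraically closed field of characteristic zero, I would first choose fiber functors to obtain equivalences $\cR \simeq \Rep(H)$ and $\cC \simeq \Rep(M)$ for reductive affine group schemes $H, M$ over $L$. This puts us in the setting of Theorem \ref{th:fact mod}, at the cost of temporarily introducing auxiliary choices that must be eliminated at the end.

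Next I would apply Lemma \ref{l:indecomp}(\ref{P decomp}) to decompose $\cP = \bigoplus_{s \in \Irr(\cP)/\sim} \cP_s$ as a direct sum of indecomposable factorizable $\cR$-module categories with coefficients in $\cC$; this is a finite direct sum since $\Irr(\cP)$ is finite by hypothesis. For each $s$, applying Theorem \ref{th:fact mod} to the indecomposable $\cP_s$ yields a homomorphism $\r_s : M \to H$, unique up to $H$-conjugation, together with an $E_2$-module structure of $\cP_s$ under $\Rep(H_{\r_s})$ from which the factorizable $\cR$-module structure is inflated. By Tannakian duality, the $H$-conjugacy class of $\r_s$ corresponds precisely to the isomorphism class of a tensor functor $\s_s : \cR \to \cC$, and the centralizer $H_{\r_s}$ is canonically identified with $\Aut^{\ot}(\s_s)$; hence $\Rep(H_{\r_s}) \simeq \Rep(\Aut^{\ot}(\s_s))$, and the $E_2$-structure transports across.

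To obtain the stated form, I would group the indecomposable pieces by the isomorphism class of the associated tensor functor: let $\Sigma$ be the (finite) set of isomorphism classes of tensor functors $\cR \to \cC$ realized as some $\s_s$, and for each $\s \in \Sigma$ set $\cP_\s := \bigoplus_{s : \s_s \cong \s} \cP_s$. The compatible $E_2$-actions on the summands assemble into a single $E_2$-action of $\Rep(\Aut^{\ot}(\s))$ on $\cP_\s$, and the factorizable $\cR$-module structure is inflated from this. For uniqueness, any decomposition of the stated form forces each $\cP_\s$ to be a union of $\sim$-equivalence classes of simples: the inflation formula shows that the tensor functor $\s$ controls which simples appear in $V \star X$ for $X \in \cP_\s$, so simples linked by $\sim$ lie in the same summand, and conversely simples in distinct $\cP_\s$ cannot be $\sim$-related because their eigenvalues differ. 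The uniqueness of the tensor functor then follows from the uniqueness clause in Theorem \ref{th:fact mod}.

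The hardest step will be verifying that the resulting $E_2$-action of $\Rep(\Aut^{\ot}(\s))$ on $\cP_\s$ is canonical, i.e.\ independent of the auxiliary neutralizations of $\cR$ and $\cC$ chosen at the outset; this ought to follow from the naturality of Tannakian reconstruction under isomorphisms of fiber functors, since a different choice replaces $(H, M, \r_s)$ by a conjugate triple, and the induced equivalence $\Rep(H_{\r_s}) \simeq \Rep(\Aut^{\ot}(\s_s))$ is intertwined. A secondary technical point is confirming that ``$E_2$-action inflated from $\Rep(\Aut^{\ot}(\s))$'' is a well-defined notion at the level of neutral Tannakian categories (not merely after choosing a fiber functor on $\cC$), which amounts to observing that the construction in Example \ref{ex:fact mod}(\ref{inflate}) depends functorially on the data and thus descends to the intrinsic Tannakian side.
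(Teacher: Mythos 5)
Your proposal follows essentially the same approach as the paper's proof: choose fiber functors to pass to $\Rep(H)$, $\Rep(M)$, decompose $\cP$ into indecomposables via Lemma~\ref{l:indecomp}(\ref{P decomp}), apply Theorem~\ref{th:fact mod} to each piece, and regroup by $H$-conjugacy class of the resulting $\r_s$, identified with isomorphism classes of tensor functors by Tannakian duality. Your additional care about uniqueness and about independence of the chosen neutralizations is a reasonable (and correct) elaboration of what the paper leaves implicit.
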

\begin{proof}
Choose fiber functors of $\cR$ and $\cC$ to identify them with $\Rep(H)$ and $\Rep(M)$. In the decomposition of $\cP$ into indecomposables (see Lemma \ref{l:indecomp}\eqref{P decomp}), we apply Theorem \ref{th:fact mod} to each $\cP_{s}$ to get a homomorphism $\r_{s}: M\to H$, such that $\cP_{s}$ is inflated from an $E_{2}$-action of $\Rep(H_{\r_{s}})$ on $\cP_{s}$. Now let $\Sigma$ be the set of $H$-conjugacy classes of $\{\r_{s}\}_{s\in \Irr(\cP)/\sim}$. A homomorphism $\r: M\to H$ up to $H$-conjugacy is the same datum as a tensor functor $\s:\cR\to \cC$, so we may identify $\Sigma$ with a set of tensor functors $\{\s: \cR\to\cC\}$. For $\s\in \Sigma$ with the corresponding $\r:M\to H$, Let $\cP_{\s}$ be the direct sum of $\cP_{s}$ for those $\r_{s}$ conjugate to $\r$ under $H$.  Note that $\Aut^{\ot}(\s)\cong H_{\r}$, so $\cP_{\s}$ is inflated from an $E_{2}$-action of $\Rep(\Aut^{\ot}(\s))$. 
\end{proof}

\begin{remark}\label{r:fact mod equiv} We state an equivariant version of Corollary \ref{c:fact mod}. Suppose both $\cR$ and $\cC$ are equipped with actions of a group $\G$. The action of $\g\in \G$ on $V\in \cR^{\bt I}$ and $W\in \cC^{\bt I}$ are denoted $V^{\g}$ and $W^{\g}$.  Suppose further that the action of $\cR$ on $\cP$ is equipped with functorial isomorphisms
\begin{equation*}
V^{\g}\star_{I}X\cong (V\star_{I}X)^{\g}, \quad \forall \g\in \G
\end{equation*}
compatible with the group structure on $\G$ and the factorization structure. Here the action of $\g$ on the right side is only on the $\cC^{\bt I}$-factor.  Under these assumptions, each functor $\s: \cR\to \cC$ constructed in Corollary \ref{c:fact mod} is equipped with a $\G$-equivariant structure. Therefore $\Aut^{\ot}(\s)$ also carries an action of $\G$. Moreover, the $E_{2}$-action of $\Rep(\Aut^{\ot}(\s))$ on $\cP_{\s}$ (denoted $\bu$) is equipped with a $\G$-invariant structure, i.e., functorial isomorphisms
\begin{equation*}
U\bu_{I} X\cong U^{\g}\bu_{I} X,\quad \forall \g\in \G, U\in \Rep(\Aut^{\ot}(\s))^{\bt I}, X\in \cP_{\s}
\end{equation*}
compatible with the group structure on $\G$ and the factorization structure. 
\end{remark}

The rest of the appendix is devoted to the proof of Theorem \ref{th:fact mod}.

\subsection{Proof of Theorem \ref{th:fact mod}}

First some notations. Let $\Ind \cP$ be the category of ind-objects in $\cP$: it is equivalent to $\Irr(\cP)$-graded vector spaces of possibly infinite dimension.  
We denote by $\om: \cC^{\bt I}=\Rep(M^{I})\to \Vect$ the forgetful functor for various $I$. We also denote the forgetful functor $\cC^{\bt I}\bt \cP\to \cP$ by $\om$. Let $\Irr(H)$ and $\Irr(M)$ denote the set of (isomorphism classes of) irreducible representations of $H$ and $M$.

For any $V\in \Irr(H)$ we have an embedding $m_{V}: V\ot V^{\vee}\cong\End(V)\subset \cO_{H}$ as matrix coefficients. Same for $M$.

The proof goes in several steps.

\sss{The affine scheme $\cS$}\label{sss:OS}
For each $V\in \Irr(H)$, the action $V\star X\in \Rep(M)\bt \cP$ for various $X\in \Irr(\cP)$ only involves finitely many irreducible representations $W\in \Irr(M)$. We denote this finite set by $\G_{V}\subset \Irr(M)$. Note that $\G_{V}$ always contains the trivial representation $\one_{\cR}$ of $M$, for $V\star(V^{\vee}\star X)$ contains $\one_{\cR}\bt X$ as a direct summand.

We define a moduli problem as follows. For any $L$-algebra $R$, let $\cS(R)$ be the set of Hopf algebra homomorphisms $\ph: \cO_{H}\to \cO_{M}\ot R$ such that,  for any $V\in \Irr(H)$, $\ph(m_{V}(\End(V)))$ lies in the span of $m_{W}(\End(W))\ot R$ for $W\in \G_{V}$. Then $\cS(R)$ is a subset of homomorphisms of algebraic groups $M_{R}\to H_{R}$. Note that $\cS(R)\ne\vn$ since it contains the trivial homomorphism $M_{R}\to H_{R}$ (because $\G_{V}$ contains $\one_{\cR}$).

We claim that $\cS$ is representable by an affine scheme of finite type over $L$. Indeed, choose a faithful $V_{0}\in \Irr(H)$, then $\ph\in \cS(R)$ is determined by the restriction $\ph|_{m_{V_{0}}(\End(V_{0}))}: \End(V_{0})\to \op_{W\in \G_{V_{0}}}\End(W)\ot R$, which is representable by an affine space of finite dimension. This realizes $\cS$ as a closed subscheme of an affine space. Moreover, $\cS$ carries an action of $M\times H$ by conjugation on $\cO_{M}$ and $\cO_{H}$.

We give generators and relations for the ring of regular functions $\cO_{\cS}$. This part is inspired by \cite[\S6]{LZ}. For any $f\in \cO_{H}$ and any $g\in M(L)$, define a function $\Phi_{f,g}\in \cO_{\cS}$ that assigns to each $R$-point $\ph:\cO_{H}\to \cO_{M}\ot R$ the value $\Phi_{f,g}(\ph)=\ev_{g}\ph(f)\in R$, where $\ev_{g}: \cO_{M}\ot R\to R$ is the evaluation at $g$.  

The functions $\{\Phi_{f,g}\}_{f\in \cO_{H}, g\in M(L)}$ generate $\cO_{\cS}$ as an $L$-algebra. Indeed it suffices to run $f$ through a basis of the matrix coefficients for a faithful $V_{0}\in \Irr(H)$, and take a finite set of $g_{i}$ such that their images in $\prod_{W\in \G_{V_{0}}}\End(W)$ span.

We now give the relations among $\{\Phi_{f,g}\}_{f\in \cO_{H}, g\in M(L)}$. We claim that the relations are generated by the following:
\begin{enumerate}
\item For any $g\in M(L)$, the assignment $f\mapsto \Phi_{f,g}$  is $L$-linear.
\item Let $V\in \Irr(H)$. Then for any finite $L$-linear combination $\sum_{i}c_{i}g_{i}$ of elements in $M(L)$ such that $\sum_{i}c_{i}g_{i}|_{W}=0$ for all $W\in \G_{V}$, then $\sum_{i}c_{i}\Phi_{f,g_{i}}=0$ for all $f\in m_{V}(\End(V))$.
\item For any $f, f'\in \cO_{H}$ and $g\in M(L)$, we have $\Phi_{ff',g}=\Phi_{f,g}\Phi_{f',g}$.
\item For any $f\in \cO_{H}$ and $g,g'\in M(L)$, we have $\Phi_{f,gg'}=\sum_{i}\Phi_{f_{i},g}\Phi_{f'_{i}, g'}$ if $\D(f)=\sum_{i}f_{i}\ot f'_{i}$ for the comultiplication $\D$ on $\cO_{H}$.
\end{enumerate}
It is easy to see that these relations indeed hold in $\cO_{\cS}$. To show they are all the relations, suppose we are given an assignment $\Phi_{f,g}\to \ph_{f,g}\in R$ satisfying the above relations, we show how to construct a Hopf algebra map $\ph: \cO_{H}\to \cO_{M}\ot R$ such that $\Phi_{f,g}$ evaluated at $\ph\in \cS(R)$ is $\ph_{f,g}$. For any $f\in \cO_{H}$,  relation (2) ensures that there exists a unique element $\ph(f)\in \op_{W\in \G_{V}}m_{W}(\End(W))\ot R\subset \cO_{M}\ot R$ such that $\ev_{g}\ph(f)=\ph_{f,g}$ for any $g\in M(L)$. Relation (1) says that the assignment $f\mapsto \ph(f)$ gives a linear map $\ph: \cO_{H}\to \cO_{M}\ot R$. Relation (3) shows that $\ph$ is an algebra homomorphism. Relation (4) shows that $\ph$ is a coalgebra homomorphism. This proves the claim.

\sss{Construction of an action of $\cO_{\cS}$ on  each object in $\cP$}We view $\cO_{\cS}$ as an algebra ind-object in $\Rep(H)$ using the conjugation action of $H$. For each $X\in\cP$ we will construct a map $\a_{X}: \om(\cO_{\cS}\star X)\to X$ in $\Ind\cP$,  compatible with the algebra structure on $\cO_{\cS}$, such that the following diagram is commutative for any $V\in \Rep(H)$ and $X\in \cP$ 
\begin{equation*}
\xymatrix{ V\star \om(\cO_{\cS}\star X) \ar[r]^-{\sim}\ar[d]^{\id_{V}\star\a_{X}} & 
(\om\bt \id_{\cC})(\cO_{\cS}\star (V\star X))\ar[d]^{\a_{V\star X}} \\
V\star X \ar@{=}[r] & V\star X}
\end{equation*}
Here the top row is induced by the commutativity constraint of the action of $\cR$ on $\cP$; in $(\om\bt \id_{\cC})(\cO_{\cS}\star (V\star X))$ we emphasize that $\om$ is applied to the first factor of $\cC$, so the result is still an object in $\cC\bt \cP$. Moreover, all maps in $\Ind\cP$ are compatible with the $\cO_{\cS}$-actions. See \cite[\S22]{G-Cat}.

In other words, if we define an internal $\uHom(X,Y)\in \Ind\Rep(H)$ for $X,Y\in \cP$ by the adjunction $\Hom_{\cP}(\om(V\star X), Y)=\Hom_{H}(V,\uHom(X,Y))$ for all $V\in \Rep(H)$, then we need to construct an algebra homomorphism $\a'_{X}: \cO_{\cS}\to \uEnd(X)$ in $\Ind\Rep(H)$, that is compatible with morphisms in $\cP$. 

The construction of $\a_{X}$ is analogous to V. Lafforgue's excursion operators \cite{La18}. 
%
For any $g\in M(L)$ we first define a map $\a_{g,X}: \om(\cO_{H}\star X)\to X$ as the composition
\begin{equation}\label{agX}
\xymatrix{\om(\cO_{H}\star X)\ar@{=}[r] & \bigoplus_{V\in \Irr(H)}\om((V\bt V^{\vee})\star_{\{1,2\}}X)\ar[d]_{(g,1)}\\
& \bigoplus_{V\in \Irr(H)}\om((V\bt V^{\vee})\star_{\{1,2\}}X)\ar@{=}[r] & \om(\cO_{H}\star X)\ar[r]^{\ev_{1}} & \one_{\cR}\star X=X.
}
\end{equation}
Here the second step uses that $(V\bt V^{\vee})\star_{\{1,2\}}X\in \Rep(M^{2}) \bt \cP$, hence $(g,1)\in M^{2}$ acts on $\om((V\bt V^{\vee})\star_{\{1,2\}}X)$. The last map $\ev_{1}:\cO_{H}\to \one_{\cR}$ is evaluation at $1\in H$. 

The map $\a_{g,X}$ then gives a map $\a'_{g,X}: \cO_{H}\to \uEnd(X)$ in $\Ind\Rep(H)$. It is supposed to be the composition
\begin{equation*}
\cO_{H}\xr{\Phi_{\bu, g}}\cO_{\cS}\xr{\a'_{X}}\uEnd(X)
\end{equation*}
where $\Phi_{\bu, g}$ denotes the $H$-equivariant map $\cO_{H}\to \cO_{\cS}$ that sends $f\in \cO_{H}$ to $\Phi_{f,g}$. We need to check the ring relations (2)-(4) in \S\ref{sss:OS} hold for $\a'_{g,X}$ to ensure that the maps $\{\a'_{g,X}\}_{g\in M(L)}$ together  give an algebra map $\cO_{\cS}\to \uEnd(X)$.
\begin{enumerate}
\item[(2)] This follows from the definition of $\G_{V}$: in $V\bt V^{\vee}\star_{\{1,2\}} X= V^{\vee}\star_{\{2\}}(V\star_{\{1\}}X)\in \cC^{\bt 2}\bt \cP$, the first factor only involves $W\in \G_{V}\subset \Irr(M)$, and $\sum c_{i}g_{i}$ acts on these $W$ by zero.

\item[(3)] We need to show that the following diagram is commutative
\begin{equation*}
\xymatrix{\om(\cO_{H}\bt \cO_{H}\star_{\{1,2\}} X) \ar[d]^{\mult}\ar[rr]^-{\id_{\cO_{H}}\bt\a_{g,X}} && \om(\cO_{H}\star X) \ar[r]^-{\a_{g,X}} & X\ar@{=}[d]\\
\om(\cO_{H}\star X) \ar[rrr]^{\a_{g,X}} & && X}
\end{equation*}
Here the map ``mult'' is induced by the multiplication $\cO_{H}\ot \cO_{H}\to \cO_{H}$.Let $V,V',W\in \Irr(H)$. Let $m^{V,V'}_{W}: \End(V)\ot \End(V')\to \End(W)$ be the composition of the multiplication of matrix coefficients in $\cO_{H}$ with the projection to $\End(W)$. Since $m^{V,V'}_{W}$ is equivariant under left and right $H$-action, it is induced from a pair of maps $\mu^{V,V'}_{W}: V\ot V'\to W$ and $\nu^{V,V'}: V^{\vee}\ot V'^{\vee}\to W^{\vee}$ in $\Rep(H)$ such that $m^{V,V'}_{W}=\mu^{V,V'}_{W}\bt \nu^{V,V'}_{W}$ in $\Rep(H^{2})$. Then the required commutativity restricted to $m_{V}(\End(V))\bt m_{V'}(\End(V'))$ follows from the following commutative diagram in $\Rep(H^{2})$
\begin{equation*}
\xymatrix{\om(V\bt V^{\vee}\bt V'\bt V'^{\vee}\star_{\{1,2,1',2'\}}X) \ar[r]^{(g,1,g,1)}\ar@{=}[d] &   
\om(V\bt V^{\vee}\bt V'\bt V'^{\vee}\star_{\{1,2,1',2'\}}X)\ar@{=}[d]\\
\om((V\ot V')\bt(V^{\vee}\ot V'^{\vee})\star_{\{1,2\}} X) \ar[r]^{(g,1)}\ar[d] & \om((V\ot V')\bt(V^{\vee}\ot V'^{\vee})\star_{\{1,2\}} X) \ar[d]\\
\om(W\bt W^{\vee}\star_{\{1,2\}}X) \ar[r]^{(g,1)} &  \om(W\bt W^{\vee}\star_{\{1,2\}}X) }
\end{equation*}

\item[(4)] We need to show, for any $g,g'\in M(L)$, the following diagram is commutative in $\Ind\Rep(H)$
\begin{equation*}
\xymatrix{\cO_{H}\ar[d]^{\D}\ar[rr]^-{\a'_{gg',X}} && \uEnd(X)\\
\cO_{H}\ot\cO_{H}\ar[rr]^-{\a'_{g,X}\ot \a'_{g',X}} && \uEnd(X)\ot \uEnd(X)\ar[u]}
\end{equation*}
Here the right vertical map is given by composition of internal Hom. Restricting to the sub-coalgebra $\End(V)\subset \cO_{H}$ for any $V\in \Irr(H)$, we need to show the following diagram is commutative
\begin{equation*}
\xymatrix{\om(V\bt V^{\vee}\star X)\ar[r]^{(gg',1)} \ar[d]^{\id_{V}\ot\textup{coev}_{V}\ot\id_{V^{\vee}}} & \om(V\bt V^{\vee}\star X) \ar[d]^{\ev_{V}}\\
\om(V\bt (V^{\vee}\ot V)\bt V^{\vee} \star X)\ar@{=}[d]  & X\\
\om(V\bt V^{\vee}\bt V\bt V^{\vee} \star X) \ar[r]^{(g,1,g',1)} &  \om(V\bt V^{\vee}\bt V\bt V^{\vee} \star X) \ar[u]_{\ev_{V}\bt\ev_{V}}
}
\end{equation*}
Both compositions $\om(V\bt V^{\vee}\star X)\to X$ are adjoint to the following map 
\begin{equation*}
\om(V\star X)\xr{g'}\om(V\star X)\xr{g}\om(V\star X).
\end{equation*}
This finishes the proof of relation (4).
\end{enumerate}

\sss{Construction of $\r$} The action of $\cO_{\cS}$ on $X\in \cP$ gives the following map in $\cP$ (here we are viewing $\cO^{H}_{\cS}$ as a trivial $H$-submodule of $\cO_{\cS}$)
\begin{equation*}
\cO^{H}_{\cS}\ot X \to \om(\cO^{H}_{\cS}\star X)\to \om(\cO_{\cS}\star X)\xr{\a_{X}} X. 
\end{equation*}
This gives an action of $\cO^{H}_{\cS}$ (as a plain $L$-algebra) on $X\in \Ind\cP$, commuting with all morphisms in $\Ind\cP$ (i.e., $\cO^{H}_{\cS}$ acts on $\id_{\cP}$). For each $X\in \Irr(\cP)$, Since $\End(X)=L$, this action factors through a homomorphism $\th_{X}: \cO^{H}_{\cS}\to L$. Since all morphisms in $\cP$ commute with the $\cO_{\cS}^{H}$-action, all simple objects $X$ with the same $\th_{X}$ form a union of equivalence classes in the sense of Lemma \ref{l:indecomp}. Since $\cP$ is indecomposable, there is only one equivalence class on $\Irr(\cP)$, hence all simple objects $X$ have the same $\th_{X}$, which we denote by $\th$.

Let $\cI\subset\cO_{\cS}$ be the ideal generated by $\ker(\th)$. Let $\wt \cZ\subset \cS$ be the closed subscheme defined by $\cI$, and  $\cZ:=\wt \cZ_{\red}\subset \cS$ the reduced scheme. Since $M$ is reductive, conjugacy classes of homomorphisms $M\to H$ form a discrete set, hence $\cS_{\red}$ is a disjoint union of $H$-orbits.  Since the only $H$-invariant functions on $\cZ$ are the scalars, it is a single $H$-orbit, i.e., the $H$-orbit of some homomorphism $\r:M\to H$.  Therefore $\cZ\cong H/H_{\r}$.

\sss{The action of $\cO_{\cS}$ on $X\in \cP$ factors through $\cO_{\cZ}$}  By the previous step, the action of $\cO_{\cS}$ on any $X\in \cP$ factors through the quotient $\cO_{\wt \cZ}$, i.e., a map  $\ov\a_{X}: \om(\cO_{\wt \cZ}\star X)\to X$. We show that $\ov\a_{X}$ further factors through a map $\b_{X}: \om(\cO_{\cZ}\star X)\to X$. 

Let $\cJ\subset \cO_{\wt \cZ}$ be the nilpotent radical. Since $\cS$ is of finite type over $L$, $\cJ^{n}=0$ for some $n>0$. Let $X\in \Irr(\cP)$, we show that the restriction of $\ov\a_{X}$ to $\om(\cJ\star X)$ is zero. If not, let $\ell$ be the smallest positive integer such that $\ov\a_{X}|\om(\cJ^{\ell}\star X)=0$. Since $\ov\a_{X}: \om(\cJ^{\ell-1}\star X)\to X$ is nonzero, it is surjective since $X$ is simple. Applying $\cJ$-action we still get a surjection $\om(\cJ\star\om(\cJ^{\ell-1}\star X))\surj\om(\cJ\star X)$. We have a commutative diagram
\begin{equation*}
\xymatrix{  \om(\cJ\star\om(\cJ^{\ell-1}\star X))\ar[d] \ar[r] & \om(\cJ\star X)\ar[d]         \\
\om(\cJ^{\ell}\star X)\ar[r] & X}
\end{equation*}
where the left arrow is given by the multiplication $\cJ\star \cJ^{\ell-1}\to \cJ^{\ell}$, and all other maps are the action maps. Now the top arrow is surjective and the bottom one is zero by assumption, which implies the right arrow is zero.  This shows that $\ov\a_{X}: \om(\cJ\star X)\to X$ is zero, i.e., the action of $\cO_{\wt \cZ}$ on $X$ factors through $\cO_{\cZ}$.


In the sequel we denote the action map of $\cO_{\cZ}$ on $X$ by
\begin{equation*}
\b_{X}: \om(\cO_{\cZ}\star X)\to X.
\end{equation*}

\sss{Construction of an $E_{2}$-action of $\Rep(H_{\r})$ on $\cP$} We will denote this action by $\bu$.  Since the image of the restriction functor $\Rep(H)\to \Rep(H_{\r})$ generate $\Rep(H_{\r})$ under taking direct summands, it suffices to define $V\bu X:=\om(V\star X)$ for any $V\in \Rep(H)$,  and check that any $H_{\r}$-equivariant map $V\to V'$ induces a map $\om(V\star X)\to \om(V'\star X)$ in a way functorial in $V,V'$ and $X$ and compatible with compositions.    

In the previous step we have shown that  $\cO_{\cZ}=L[H]^{H_{\r}}$ (right translation invariants) acts  on $X$. For any $W\in \Rep(H)$, we have a map $m_{W,\cZ}: W^{H_{\r}}\ot W^{\vee}\to \cO_{\cZ}$ in $\Rep(H)$ given by the taking the matrix coefficient $w\ot \xi\mapsto (h\mapsto \j{\xi, hw})$. In particular, taking $W=V^{\vee}\ot V'$ for $V,V'\in \Rep(H)$, we get  a map $m_{W,\cZ}:\Hom_{H_{\r}}(V,V')\ot (V\ot V'^{\vee})\to \cO_{\cZ}$ in $\Rep(H)$. Composed with the action map $\b_{X}$ we get
\begin{equation}\label{pre gam}
\wt\g_{V,V',X}: \Hom_{H_{\r}}(V,V')\ot \om( (V\ot V'^{\vee})\star X)\xr{m_{W,\cZ}\star\id_{X}} \om(\cO_{\cZ}\star X)\xr{\b_{X}} X
\end{equation}
which is the same as a map
\begin{equation*}
\g_{V,V',X}: \Hom_{H_{\r}}(V,V')\to \Hom_{\cP}(\om((V\ot V'^{\vee})\star X), X)=\Hom_{\cP}(\om(V\star X), \om(V'\star X)).
\end{equation*}
One checks these maps are compatible with compositions $V\to V'\to V''$ using the fact that $\b_{X}$ is compatible with the ring structure on $\cO_{\cZ}$. This finishes the construction of $V\bu X$.

\sss{The action of $\Rep(H)$ on $\cP$ is inflated from the $\Rep(H_{\r})$-action}  To show this, we need to check that for any $g\in M(L), V\in \Rep(H)$ and $X\in \cP$, the following two endomorphisms of $\om(V\star X)$ are the same: the first one, which we denote by $a_{g}$,  is obtained by the action of $g$ on the forgetful functor $\om$; the second one, which we denote by $b_{g}$, comes by evaluating $\g_{V,V,X}$ at $\r(g)\in \Hom_{H_{\r}}(V,V)$.

Consider the map $c_{V,g}: V\ot V^{\vee}\to \cO_{\cZ}$ given by $\xi\ot v\mapsto c_{V,g}(h)=\j{\xi, h\r(g)h^{-1}v}=\j{h^{-1}\xi, \r(g)h^{-1}v}$. We are using that $\r(g)$ commutes with $H_{\r}$ to conclude that $c_{V,g}$ is right invariant under $H_{\r}$ hence giving a function on $\cZ$. The map $c_{V,g}$ is $H$-equivariant for the diagonal action of $H$ on $V\ot V^{\vee}$. Consider the following map
\begin{equation*}
\d_{V,g,X}: \om((V\ot V^{\vee})\star X)\xr{c_{V,g}\star\id_{X}} \om(\cO_{\cZ}\star X)\xr{\b_{X}} X. 
\end{equation*}
By adjunction it gives an endomorphism $d_{g}$ of $\om(V\star X)$. On the one hand, the definition of the $\cO_{\cS}$-action, see \eqref{agX},  implies that $d_{g}=a_{g}$; on the other hand, comparing $\d_{V,g,X}$ to the map $\wt\g_{V,V,X}$ in \eqref{pre gam}, we see that $\d_{V,g,X}$ is the restriction of $\wt\g_{V,V,X}$ to $\r(g)\ot \om((V\ot V^{\vee})\star X)$, hence $d_{g}=b_{g}$. This shows $a_{g}=b_{g}$ and finishes the proof of Theorem \ref{th:fact mod}. \qed

\bibliographystyle{alpha}
\bibliography{mybib}

\begin{thebibliography}{DMPS92}

\bibitem[Ari10]{Arinkin10}
Dima Arinkin.
\newblock Rigid irregular connections on {$\mathbb{P}^1$}.
\newblock {\em Compos. Math.}, 146(5):1323--1338, 2010.

\bibitem[Bri86]{Br86}
Michel Brion.
\newblock Quelques propri\'{e}t\'{e}s des espaces homog\`enes sph\'{e}riques.
\newblock {\em Manuscripta Math.}, 55(2):191--198, 1986.

\bibitem[DMPS92]{DMPS92}
F.~De~Mari, C.~Procesi, and M.~A. Shayman.
\newblock Hessenberg varieties.
\newblock {\em Trans. Amer. Math. Soc.}, 332(2):529--534, 1992.

\bibitem[DR00]{DR00}
Michael Dettweiler and Stefan Reiter.
\newblock An algorithm of {K}atz and its application to the inverse {G}alois
  problem.
\newblock {\em J. Symbolic Comput.}, 30(6):761--798, 2000.
\newblock Algorithmic methods in Galois theory.

\bibitem[DR10]{DR10}
Michael Dettweiler and Stefan Reiter.
\newblock Rigid local systems and motives of type {$G_2$}.
\newblock {\em Compos. Math.}, 146(4):929--963, 2010.
\newblock With an appendix by Michael Dettweiler and Nicholas M. Katz.

\bibitem[FW08]{FW}
Edward Frenkel and Edward Witten.
\newblock Geometric endoscopy and mirror symmetry.
\newblock {\em Commun. Number Theory Phys.}, 2(1):113--283, 2008.

\bibitem[Gai05]{G-Cat}
Dennis Gaitsgory.
\newblock The notion of category over an algebraic stack.
\newblock \url{https://arxiv.org/abs/math/0507192}, 2005.

\bibitem[GKM06]{GKM}
Mark Goresky, Robert Kottwitz, and Robert MacPherson.
\newblock Purity of equivalued affine {S}pringer fibers.
\newblock {\em Represent. Theory}, 10:130--146, 2006.

\bibitem[GR10]{GR10}
Benedict~H. Gross and Mark Reeder.
\newblock Arithmetic invariants of discrete {L}anglands parameters.
\newblock {\em Duke Math. J.}, 154(3):431--508, 2010.

\bibitem[HNY13]{HNY}
Jochen Heinloth, Bao-Ch\^au Ng\^o, and Zhiwei Yun.
\newblock Kloosterman sheaves for reductive groups.
\newblock {\em Ann. of Math. (2)}, 177(1):241--310, 2013.

\bibitem[Jak20]{Ja20}
Konstantin Jakob.
\newblock Classification of rigid irregular {$G_2$}-connections.
\newblock {\em Proceedings of the London Mathematical Society},
  120(6):831--852, 2020.

\bibitem[Kat88]{Ka88}
Nicholas~M. Katz.
\newblock {\em Gauss sums, {K}loosterman sums, and monodromy groups}, volume
  116 of {\em Annals of Mathematics Studies}.
\newblock Princeton University Press, Princeton, NJ, 1988.

\bibitem[Kat96]{Ka96}
Nicholas~M. Katz.
\newblock {\em Rigid local systems}, volume 139 of {\em Annals of Mathematics
  Studies}.
\newblock Princeton University Press, Princeton, NJ, 1996.

\bibitem[KY20]{KY20}
Masoud Kamgarpour and Lingfei Yi.
\newblock Geometric {L}anglands for hypergeometric sheaves.
\newblock \url{https://arxiv.org/abs/2006.10870}, 2020.

\bibitem[Laf18]{La18}
Vincent Lafforgue.
\newblock Chtoucas pour les groupes r\'{e}ductifs et param\'{e}trisation de
  {L}anglands globale.
\newblock {\em J. Amer. Math. Soc.}, 31(3):719--891, 2018.

\bibitem[Lus89]{Lus89}
George Lusztig.
\newblock Cells in affine {W}eyl groups. {IV}.
\newblock {\em J. Fac. Sci. Univ. Tokyo Sect. IA Math.}, 36(2):297--328, 1989.

\bibitem[LZ18]{LZ}
Vincent Lafforgue and Xinwen Zhu.
\newblock Décomposition au-dessus des paramètres de {L}anglands elliptiques.
\newblock \url{https://arxiv.org/abs/1811.07976}, 2018.

\bibitem[MWZ99]{MWZ99}
Peter Magyar, Jerzy Weyman, and Andrei Zelevinsky.
\newblock Multiple flag varieties of finite type.
\newblock {\em Adv. Math.}, 141(1):97--118, 1999.

\bibitem[MWZ00]{MWZ00}
Peter Magyar, Jerzy Weyman, and Andrei Zelevinsky.
\newblock Symplectic multiple flag varieties of finite type.
\newblock {\em J. Algebra}, 230(1):245--265, 2000.

\bibitem[RLYG12]{RLYG}
Mark Reeder, Paul Levy, Jiu-Kang Yu, and Benedict~H. Gross.
\newblock Gradings of positive rank on simple {L}ie algebras.
\newblock {\em Transform. Groups}, 17(4):1123--1190, 2012.

\bibitem[RY14]{RY14}
Mark Reeder and Jiu-Kang Yu.
\newblock Epipelagic representations and invariant theory.
\newblock {\em J. Amer. Math. Soc.}, 27(2):437--477, 2014.

\bibitem[Sab08]{Sa08}
Claude Sabbah.
\newblock An explicit stationary phase formula for the local formal
  {F}ourier-{L}aplace transform.
\newblock In {\em Singularities {I}}, volume 474 of {\em Contemp. Math.}, pages
  309--330. Amer. Math. Soc., Providence, RI, 2008.

\bibitem[Sim91]{Si91}
Carlos~T. Simpson.
\newblock Products of matrices.
\newblock In {\em Differential geometry, global analysis, and topology
  ({H}alifax, {NS}, 1990)}, volume~12 of {\em CMS Conf. Proc.}, pages 157--185.
  Amer. Math. Soc., Providence, RI, 1991.

\bibitem[Ste03]{St03}
John~R. Stembridge.
\newblock Multiplicity-free products and restrictions of {W}eyl characters.
\newblock {\em Represent. Theory}, 7:404--439, 2003.

\bibitem[Tym06]{Ty06}
Julianna~S. Tymoczko.
\newblock Linear conditions imposed on flag varieties.
\newblock {\em Amer. J. Math.}, 128(6):1587--1604, 2006.

\bibitem[Yun14]{Yun14}
Zhiwei Yun.
\newblock Rigidity in automorphic representations and local systems.
\newblock In {\em Current developments in mathematics 2013}, pages 73--168.
  Int. Press, Somerville, MA, 2014.

\bibitem[Yun16]{Yun16}
Zhiwei Yun.
\newblock Epipelagic representations and rigid local systems.
\newblock {\em Selecta Math. (N.S.)}, 22(3):1195--1243, 2016.

\end{thebibliography}

\end{document}